\documentclass[final]{siamltex1213}

\usepackage{ifpdf}
\ifpdf
  \usepackage{hyperref} 
\else
\fi

\usepackage{amssymb,latexsym,amsmath,verbatim,enumerate}
\newtheorem{remark}{{\em Remark}}
\newtheorem{example}{{\em Example}}

\newcommand{\eqdist}{=_{d}}
\def\indist{\to_{d}}
\def\E{\mathbb{E}}
\def\P{\mathbb{P}}
\def\<{\langle}
\def\>{\rangle}

 \title{Stochastic switching in infinite dimensions with applications to random parabolic PDEs}
\date{\today}
\author{Sean D. Lawley\footnotemark[2]\ \footnotemark[3]\ , Jonathan C. Mattingly\footnotemark[2]\ \footnotemark[4]\ , and Michael C. Reed\footnotemark[2]\ \footnotemark[5]}

\pagestyle{myheadings}
\thispagestyle{plain}
\markboth{S.~D. LAWLEY, J.~C. MATTINGLY, AND M.~C. REED}{STOCHASTIC SWITCHING AND RANDOM PDES}

\begin{document}
 
\maketitle

\renewcommand{\thefootnote}{\fnsymbol{footnote}}
\footnotetext[2]{Mathematics Department, Duke University, Durham, NC 27708 (lawley@math.duke.edu, jonm@math.duke.edu, and reed@math.duke.edu).}
\footnotetext[3]{This author was supported in part by NSF grant DMS-0943760.}
\footnotetext[4]{This author was supported in part by NSF grant DMS-0854879.}
\footnotetext[5]{This author was supported in part by NSF grants EF-1038593 and DMS-0943760 and NIH grant R01 ES019876.}

\renewcommand{\thefootnote}{\arabic{footnote}}

\begin{abstract}
  We consider parabolic PDEs with randomly switching boundary  conditions. In order to analyze these random PDEs, we consider more  general stochastic hybrid systems and prove convergence to, and  properties of, a stationary distribution. Applying these general  results to the heat equation with randomly switching boundary  conditions, we find explicit formulae for various statistics of the  solution and obtain almost sure results about its regularity and  structure. These results are of particular interest for biological applications as well as for their  significant departure from behavior seen in PDEs forced by disparate  Gaussian noise. Our general results also have applications to other  types of stochastic hybrid systems, such as ODEs with randomly  switching right-hand sides.
\end{abstract}

\begin{keywords}
Random PDEs, hybrid dynamical systems, switched dynamical systems, piecewise deterministic Markov process, ergodicity
\end{keywords}
\begin{AMS}
35R60, 37H99, 46N20, 60H15, 92C30
\end{AMS}

\section{Introduction}\label{section:introduction}
The primary motivation for this paper is to study parabolic partial
differential equations (PDEs) with randomly switching boundary
conditions. More precisely, given an elliptic differential operator,
$L$, on a domain $D\subset\mathbb{R}^{d}$, we want to study the
stochastic process $u(t,x)$ that solves $\partial_{t} u = L u$ in $D$
subject to boundary conditions that switch at random times between two
given boundary conditions. 

This type of random PDE is an example of a stochastic hybrid system.
The word ``hybrid'' is used because these stochastic processes
involve both continuous dynamics and discrete events. In this
example, the continuous dynamics are the different boundary value
problems corresponding to the different boundary conditions for the
given PDE, and the discrete events are when the boundary condition
switches. 

In general, a stochastic hybrid system is a continuous-time stochastic
process with two components: a continuous component $(X_{t})_{t\ge0}$
and a jump component $(J_{t})_{t\ge0}$. The jump component, $J_{t}$, is
a jump process on a finite set, and for each element of its state
space we assign some continuous dynamics to $X_{t}$. In between jumps
of $J_{t}$, the component $X_{t}$ evolves according to the dynamics
associated with the current state of $J_{t}$. When $J_{t}$ jumps, the
component $X_{t}$ switches to follow the dynamics associated with
the new state of $J_{t}$. 

An ordinary differential equation (ODE) with a 
switching right-hand side is the type of stochastic hybrid system that
is most commonly used in applications. Such ODE switching systems have
been used extensively in applied areas such as control theory,
computer science, and engineering (for example, \cite{Yin_2010},
\cite{boxma_/off_2005}, \cite{balde_note_2009}, and
\cite{lin09}). More recently, these systems have been used in diverse
areas of biology (for example, molecular biology
\cite{buckwar_exact_2011}, \cite{newby_asymptotic_2011},
\cite{bressloff_metastability_2013}, ecology
\cite{zhu_competitive_2009}, and epidemiology
\cite{farkas_pathogen_2011}). 
Furthermore, such ODE switching systems have also recently been the 
subject of much mathematical study (\cite{lawleyode},
\cite{hairer13}, \cite{benaim12quant}, \cite{benaim12qual},
\cite{bakhtin12}, \cite{hasler13finite}, \cite{hasler13asymptotic},
and \cite{belykh13}).

Comparatively, stochastic hybrid systems stemming from PDEs have
received little attention. While deterministic PDEs coupled to random
boundary conditions have been studied, the random boundary conditions
have 
typically been assumed to involve some Gaussian noise forcing
(\cite{bakhtin_burgers_2007}, \cite{duan_recent_2010},
\cite{wang_reductions_2009}, \cite{random_pde}, and
\cite{da_prato_1993}
).  The randomness enters our PDE system in a fundamentally different
way than in Stochastic PDEs which are driven by additive space-time
white noise (or even spatially smoother Gaussian fields). There the
fine scales are often asymptotically independent of each other
\cite{mattinglysuidan05,mattinglysuidan08}. Here, there is a single
piece of randomness which dictates the fine structure and hence the
fine scales, though not asymptotically deterministic, are
asymptotically perfectly correlated. (See Proposition~\ref{BC bounds}
for more details.)

 We were led to study such random PDEs by various biological applications. One
application is to insect respiration. Essentially all insects breathe via a network of
tubes that allow oxygen and carbon dioxide to diffuse to and from their cells \cite{wigglesworth_respiration_1931}.
Air enters and exits this network through valves (called spiracles) in the exoskeleton,
which regulate air 
flow by opening and closing quite irregularly in time.  This leads naturally to the following model problem.
Let $u(x,t)$ satisfy the heat equation $\partial_tu = D\partial_x^2u$ on $[0,L].$ $x = 0$ corresponds to tissue where the oxygen is absorbed, so $u(0,t) = 0.$  $x =L$ corresponds to a spiracle, so there the boundary condition switches between $\partial_x u(L,t) = 0$ (spiracle closed) and $u(L,t) = b > 0$ (spiracle open). Suppose the boundary conditions switch at exponential rates $r_0$ and $r_1$. We would like to calculate the long term statistics of the solution $u(t,x)$; in particular, we would would like to know how the oxygen absorption at the tissue, $\partial_x u(0,t)$  depends on $D$ and the switching rates. This model problem is fully developed in Section 4.3. 

A second such problem arises in understanding the concentration of neurotransmitters in the extracellular space in the brain. Imagine that the axonal projections from a nucleus of cells make a dense, random set of terminals in projection region, $P$, in the brain. For example, there is a dense set of terminals of serotonin neurons in the striatum that come from the dorsal raphe nucleus \cite{feldman}. Action potentials arrive as a Poisson process at a terminal and when they do,  neurotransmitter is released at a high rate into the extracellular space for a very short amount of time. At other times, the neurotransmitter is absorbed back into the terminal. We would like to calculate the long term statistics of the neurotransmitter concentration in the exterior domain that consists of $P$ with the terminal volumes removed. On the large scale, this is a homogenization problem. But to solve it, one first has to understand the local switching problem. The solution of the heat equation in the exterior domain, $u(x,t)$, satisfies $\partial_nu(x,t) = c >>0$ for a short time after an action potential has arrived, and $u(x,t) = 0$ at other times, for points $x$ on the boundary of a terminal. Thus, as in the previous paragraph we are switching between Dirichlet and Neuman boundary conditions at random times.  These questions are extremely important for neuroscience because it is now known that some groups of neurons affect distant locations of the brain by firing more or less and thus changing the ambient concentration of the neurotransmitter
in the extracellular space in the distant region, a phenomenon called ``volume transmission'' \cite{reed}\cite{fuxe}. An analysis of this problem, using the techniques developed in this paper, will be the subject of future work. 

Our paper is organized as follows. 
In Section~\ref{hilbertchapter}, we consider more general stochastic
hybrid systems from the viewpoint of iterated random functions (see
\cite{diaconis_iterated_1999} or \cite{Kifer86,Kifer88} for a review of iterated random
functions). Assuming that the continuous dynamics are contracting on
average, we prove convergence to a stationary distribution and
describe the structure and properties of this distribution. In
Section~\ref{pdechapter}, we apply these general results to the random
PDE problems described above. We show that the mean of the process
satisfies the PDE and that the mean of the stationary distribution
satisfies the time homogeneous version of the PDE. Then in
Section~\ref{section: examples}, we apply our results from
Sections~\ref{hilbertchapter} and \ref{pdechapter} to the
one-dimensional heat equation with randomly switching boundary
conditions. We find explicit formulae for various statistics of the
solution and obtain almost sure results about its regularity and
structure. There, we also show that our general results have
applications to other types of stochastic hybrid systems, such as ODEs
with randomly switching right-hand sides. Finally, we end
Section~\ref{section: examples} by explaining that our
results can be applied to the question in insect physiology mentioned above. 

We conclude this introduction by giving two examples that motivated our study. 
We return to these examples in Section~\ref{section: examples}.
Consider the heat equation on the interval $[0,L]$ with an absorbing boundary condition at 0 and a randomly switching boundary condition at $L$. Let the switching be controlled by a Markov jump process, $J_{t}$, on $\{0,1\}$ with $r_{0}$ and $r_{1}$ the respective rates for leaving states 0 and 1. In the following two examples, we consider different possible boundary conditions at $L$.

\begin{example}\label{ex: dir neu}
{\rm
Suppose the boundary condition at $L$ switches between an inhomogeneous Dirichlet condition and a Neumann no flux condition. More precisely, consider the stochastic process $u(t,x)\in L^{2}[0,L]$ that solves
\begin{align*}
\partial_{t}u & = D \Delta u \quad\text{in }(0,L)\\
u(0,t)  & = 0
\quad\text{and}\quad
J_{t}u_{x}(L,t) + (1-J_{t})(u(L,t)-b)  = 0.
\end{align*}
We show in Section~\ref{dir neu} that as $t\to\infty$, the process
$u(t,x)$ converges in distribution to an $L^{2}[0,L]$-valued random
variable whose expectation is a linear function. Letting
$\gamma=L\sqrt{(r_{0}+r_{1})/D }$ and $\rho=r_{0}/r_{1}$, we will show
that the
slope of this function is
\begin{align*}
\left(1+\frac{\rho}{\gamma}\tanh(\gamma)\right)^{-1}\frac{b}{L}.
\end{align*}}
\end{example}

\begin{example}\label{ex: dir dir}
{\rm
Suppose the boundary condition at $L$ switches between an inhomogeneous Dirichlet condition and a homogeneous Dirichlet condition. More precisely, consider the stochastic process $u(t,x)\in L^{2}[0,L]$ that solves
\begin{align*}
\partial_{t}u & = D \Delta u \quad\text{in }(0,L)\\
u(0,t) & = 0
\quad\text{and}\quad
J_{t}u(L,t) + (1-J_{t})(u(L,t)-b)  = 0.
\end{align*}
We show in Section~\ref{dir dir} that as $t\to\infty$, the process
$u(t,x)$ converges in distribution to an $L^{2}[0,L]$-valued random
variable whose expectation is a linear function. Letting
$p=r_{0}/(r_{0}+r_{1})$, we will show that the slope of this function is
\begin{align*}
(1-p)\frac{b}{L}.
\end{align*}}
\end{example}

The expectations for Examples~\ref{ex: dir neu} and \ref{ex: dir dir} are quite different.
In Example~\ref{ex: dir dir}, the expectation is the solution to the
time homogenous PDE with boundary conditions given by the average of
the two possible boundary conditions. We will see in Section~\ref{dir
  dir} that this simple result holds because the process switches
between boundary conditions of the same type and the corresponding semigroups commute. Moreover, because the
boundary conditions are the same type we will be able to compute
individual and joint statistics of the Fourier coefficients of the
stationary solution and show that this solution almost surely has a
very specific structure and regularity. 

In both examples, the expectation is a linear function with slope
given by $b/L$ multiplied by a factor less than one. While in
Example~\ref{ex: dir dir} this factor is simply the proportion of time
the boundary condition is inhomogeneous, the factor in
Example~\ref{ex: dir neu} is an unexpected expression involving the
hyperbolic tangent. Furthermore, while the factor in Example~\ref{ex:
  dir neu} still depends on the proportion of time the boundary
condition is inhomogeneous, it also depends on how often the boundary
conditions switch. Observe that if we keep this proportion fixed by
fixing the ratio $r_{0}/r_{1}$, and take the frequency of switches
small by letting $r_{0}+r_{1}$ go to 0, then the slope for
Example~\ref{ex: dir neu} approaches the same slope as in the
Example~\ref{ex: dir dir}. And if we keep the ratio $r_{0}/r_{1}$
fixed, but let the $r_{0}+r_{1}$ go to infinity, then the slope for
Example~\ref{ex: dir neu} approaches $b/L$. Some biological
implications of this result are discussed in
Section~\ref{section:insect}. 

\section{Abstract setting}\label{hilbertchapter}

We first consider stochastic hybrid systems in a separable Banach
space $X$. Under certain contractivity assumptions, we prove that the
process converges in distribution at large time and we show that the
limiting distribution satisfies certain invariance
properties. Although applicable to a range of stochastic hybrid
systems, the contents of this section will prove particularly useful
when we consider PDEs with randomly switching boundary conditions in
Sections~\ref{pdechapter} and \ref{section: examples}. 
\subsection{Discrete-time process}\label{section:discrete}
We first define the set $\Omega$ of all possible switching
environments and equip it with a probability measure $\P$ and
associated expectation $\E$. Let $\mu_{0}$
and $\mu_{1}$ be two probability distributions on the positive real
line. Define each switching environment, $\omega\in\Omega$, as
the 
sequence $\omega=(\omega_{1},\omega_{2},\dots)$, where each
$\omega_{k}$ is a pair of non-negative real numbers,
$(\tau^{k}_{0},\tau^{k}_{1})$, drawn from $\mu_{0}\times\mu_{1}$.
%\begin{revision}
That is, $(\tau^{k}_{0},\tau^{k}_{1})$ is an $\mathbb{R}^{2}$-valued random variable drawn from the product measure $\mu_{0}\times\mu_{1}$.
%\end{revision}
We
take $\P$ to be the infinite product measure generated by
$\mu_{0}\times\mu_{1}$. To
summarize some notation
\begin{align}\label{omega definition}
\omega = \left(\omega_{1},\omega_{2},\omega_{3},\dots\right) = \left((\tau^{1}_{0},\tau^{1}_{1}),(\tau^{2}_{0},\tau^{2}_{1}),(\tau^{3}_{0},\tau^{3}_{1}),\dots\right)\in \Omega.
\end{align}

For each $t\ge0$, let $\Phi_{t}^0(x)$ and $\Phi_{t}^1(x)$ be two
mappings from a separable Banach space $X$ to itself.  Make the
following assumptions on $\Phi^{i}_{t}$ for each $i\in\{0,1\}$,
$t\ge0$, $x,y\in X$, and with $\tau_{i}$ an independent draw from
$\mu_{i}$.
\begin{enumerate}[(a)]
\item
$\Phi_{t}^0(x)=x=\Phi_{t}^1(x)$ if $t=0$
\item
$t\mapsto \Phi^{i}_{t}(x)\in X$ is continuous
\item
$\E|\Phi^{i}_{\tau_{i}}(x)|<\infty$ 
\item
$|\Phi_{t}^{i}(x)-\Phi_{t}^{i}(y)|\le K_{i}(t)|x-y|$ for some $K_{i}(t)$
\item
$\E K_{0}(\tau_{1})\E K_{1}(\tau_{1})<\infty$ and 
$\E\log(K_{0}(\tau_{1})\E K_{1}(\tau_{1}))<0$
\end{enumerate}

For each $\omega\in\Omega$, $x\in X$, and natural number $k$, define the compositions
\begin{align*}
G_{\omega}^{k}(x)  := \Phi_{\tau^{k}_{1}}^1\circ \Phi_{\tau^{k}_{0}}^{0}(x)\qquad\text{and}\qquad
F_{\omega}^{k}(x)  := \Phi_{\tau^{k}_{0}}^0\circ \Phi_{\tau^{k}_{1}}^{1}(x).
\end{align*}
For each $\omega\in\Omega$, $x\in X$, and natural number $n>0$, we define the forward maps $\varphi^{n}$ and $\gamma^{n}$, and the backward maps $\varphi^{-n}$ and $\gamma^{-n}$ by the following compositions of $G$ and $F$:
\begin{align}\label{discrete process}
\begin{split}
\varphi^{n}_{\omega}(x) & = G_{\omega}^{n}\circ \cdots \circ G_{\omega}^{1}(x)
\qquad\text{and}\qquad
\gamma^{n}_{\omega}(x)  = F_{\omega}^{n}\circ \cdots \circ F_{\omega}^{1}(x),\\
\varphi^{-n}_{\omega}(x) & = G_{\omega}^{1}\circ \cdots \circ G_{\omega}^{n}(x)
\qquad\text{and}\qquad
\gamma^{-n}_{\omega}(x)  = F_{\omega}^{1}\circ \cdots \circ F_{\omega}^{n}(x).
\end{split}
\end{align}
To make our notation complete, we define $\varphi^{0}(x) = x = \gamma^{0}(x)$.

\begin{remark}\label{discrete remark}
{\rm The maps $\varphi^{n}$ and $\gamma^{n}$ are iterated random functions, (see \cite{diaconis_iterated_1999} for a review).
Assumptions (d) and (e) above ensure that $G^{k}$ and $F^{k}$ are contracting on average.
Thus, $\{\varphi^{n}\}_{n\ge0}$ and $\{\gamma^{n}\}_{n\ge0}$ are Markov chains with invariant probability distributions given by the distributions of the almost sure limits of $\varphi^{-n}$ and $\gamma^{-n}$ as $n\to\infty$, respectively. Moreover, the distributions of the Markov chains $\varphi^{n}$ and $\gamma^{n}$ converge at a geometric rate to these invariant distributions. These results are immediately attained by applying theorems in, for example, \cite{diaconis_iterated_1999,Kifer88,Duflo}. Nonetheless, we prove the following proposition to make our results more self-contained.}
\end{remark}

\begin{proposition}\label{prop:pullback}
Define
\begin{align}\label{pullback 0}
Y_{1}(\omega)  := \lim_{n\to\infty}\varphi^{-n}_{\omega}(x)
\quad\text{and}\quad
Y_{0}(\omega)  := \lim_{n\to\infty}\gamma^{-n}_{\omega}(x).
\end{align}
These limits exist almost surely and are independent of $x \in
X$.
\end{proposition}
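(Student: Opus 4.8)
The plan is to establish the claim for $\varphi^{-n}_{\omega}$ (hence for $Y_{1}$); the statement for $\gamma^{-n}_{\omega}$ (hence for $Y_{0}$) then follows by interchanging the two indices throughout. The starting point is the one-step identity $\varphi^{-(n+1)}_{\omega}=\varphi^{-n}_{\omega}\circ G^{n+1}_{\omega}$, read off directly from \eqref{discrete process}. Assumption (d) gives that each $G^{k}_{\omega}=\Phi^{1}_{\tau^{k}_{1}}\circ\Phi^{0}_{\tau^{k}_{0}}$ is Lipschitz with constant at most $K_{1}(\tau^{k}_{1})K_{0}(\tau^{k}_{0})$, so $\varphi^{-n}_{\omega}$ is Lipschitz with constant at most $\Lambda_{n}(\omega):=\prod_{k=1}^{n}K_{1}(\tau^{k}_{1})K_{0}(\tau^{k}_{0})$. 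Combining the two facts yields the telescoping increment bound
\begin{align*}
\bigl|\varphi^{-(n+1)}_{\omega}(x)-\varphi^{-n}_{\omega}(x)\bigr|\ =\ \bigl|\varphi^{-n}_{\omega}(G^{n+1}_{\omega}(x))-\varphi^{-n}_{\omega}(x)\bigr|\ \le\ \Lambda_{n}(\omega)\,\bigl|G^{n+1}_{\omega}(x)-x\bigr|.
\end{align*}

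Next I would bound the expectation of the right-hand side. The factor $\Lambda_{n}(\omega)$ depends only on $\omega_{1},\dots,\omega_{n}$ while $G^{n+1}_{\omega}(x)$ depends only on $\omega_{n+1}$, so under the product measure $\P$ the two factors are independent; moreover $\E\Lambda_{n}=\bigl(\E K_{0}(\tau_{0})\,\E K_{1}(\tau_{1})\bigr)^{n}=:\lambda^{n}$, where $\lambda<1$ by assumption (e). A short estimate, writing $|G^{n+1}_{\omega}(x)|\le K_{1}(\tau^{n+1}_{1})\bigl(|\Phi^{0}_{\tau^{n+1}_{0}}(x)|+|x|\bigr)+|\Phi^{1}_{\tau^{n+1}_{1}}(x)|$ by (d) and taking expectations, shows using (c) that $\E|G^{n+1}_{\omega}(x)-x|\le C$ for a finite constant $C=C(x)$ that does not depend on $n$ (since $\omega_{n+1}$ has the same law as $\omega_{1}$). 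Hence $\E\bigl|\varphi^{-(n+1)}_{\omega}(x)-\varphi^{-n}_{\omega}(x)\bigr|\le C\lambda^{n}$, and Tonelli's theorem gives $\sum_{n\ge1}\bigl|\varphi^{-(n+1)}_{\omega}(x)-\varphi^{-n}_{\omega}(x)\bigr|<\infty$ almost surely. Since $X$ is complete, $(\varphi^{-n}_{\omega}(x))_{n}$ is then almost surely a Cauchy sequence, hence convergent; its limit $Y_{1}(\omega)$ is measurable as an almost sure limit of measurable maps.

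For the independence of the initial point, note that (d) also gives $|\varphi^{-n}_{\omega}(x)-\varphi^{-n}_{\omega}(x')|\le\Lambda_{n}(\omega)\,|x-x'|$ for all $x,x'\in X$, and since $\sum_{n}\E\Lambda_{n}=\sum_{n}\lambda^{n}<\infty$ we have $\Lambda_{n}(\omega)\to0$ almost surely. Working on the full-measure event on which both $\varphi^{-n}_{\omega}(0)$ converges and $\Lambda_{n}(\omega)\to0$, it follows that $\varphi^{-n}_{\omega}(x)\to Y_{1}(\omega)$ for every $x\in X$ simultaneously, which is the asserted independence of $x$. I expect the only points needing a little care to be this last step---assembling a single almost-sure event valid for all $x$ at once---and the uniform-in-$n$ integrability bound on $|G^{n+1}_{\omega}(x)-x|$; the remainder is the standard contracting iterated-random-functions argument (cf.\ \cite{diaconis_iterated_1999}).
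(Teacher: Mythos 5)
Your proof is correct and follows essentially the same contracting-iterated-random-functions argument as the paper: telescoping increments bounded by the product of Lipschitz constants, whose expectation decays geometrically by assumptions (c)--(e), giving an almost surely summable series and hence an almost surely Cauchy (so convergent) sequence. The only cosmetic difference is that the paper builds the independence of the initial point directly into a two-initial-point telescoping bound, whereas you handle it afterward via $\Lambda_n(\omega)\to 0$ almost surely; both are valid, and your variant even yields a single full-measure event on which the limit agrees for all $x$ simultaneously.
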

\begin{remark}
{\rm
  A random set which attracts all initial data started at
  ``$-\infty$'' and is forward-invariant under the dynamics is called
  a random pullback attractor \cite{Kifer88,CrauelFlandoli94,Crauel01, Mattingly02}.
  When that attractor consists of a single point almost surely then it
  is called a random point attractor. In this case, the single point
  can be viewed as a random variable.  Random variables such as these
  are often called random pullback attractors, or ``pullbacks'' for
  short, because they take an initial condition $x$ and pull it back
  to the infinite past \cite{Crauel01,Schmalfuss96,Mattingly99}. Since when the
  random attractor is a single point one can associate to each
  realization of random ``forcing'' a single attracting solution which
  gives the asymptotic behavior, this is also ofter referred to as the
  ``one force, one solution''
  paradigm \cite{EKhaninMazelSinai,Mattingly99,Mattingly02}.}
\end{remark}
\begin{proof}
We will show that the sequence $\varphi^{-n}(x)$ is almost surely Cauchy. Let $x_1,x_2\in X$ and $n\ge m$. Using the triangle inequality repeatedly, we obtain
\begin{align}\label{triangle repeat}
\begin{split}
  |\varphi^{-n}(x_{1})-\varphi^{-m}(x_{2})|
& \le \sum_{i=m+1}^n |G^{1}\circ\dots\circ G^{i}(x_1)-G^{1}\circ\dots\circ G^{i-1}(x_2)|\\
& \le \sum_{i=m+1}^n|G^{i}(x_1)-x_2|\bigg(\prod_{j=1}^{i-1}K_{0}(\tau_{0}^{j})K_{1}(\tau_{1}^{j})\bigg).
\end{split}
\end{align}
Assumptions (c), (d), and (e) give that $\E|G^{i}(x_1)-x_2|<\infty$, and thus a simple application of the Borel Cantelli lemma gives the existence of an almost surely finite random constant $C_1(\omega)$ such that for all $i\in\mathbb{N}$
\begin{align}\label{borel c 1}
|G^{i}(x_1)-x_2|<C_1(\omega)(i)^2.
\end{align}

Let $Z_j:=K_{0}(\tau_{0}^{j})K_{1}(\tau_{1}^{j})$. A standard argument (see, for example, \cite{diaconis_iterated_1999}, Lemmas~5.2 and 5.4) gives the existence of constants $\epsilon>0$, $A>0$, and $0<r<1$ such that for all $i\in\mathbb{Z}$, we have 
$\P(\sum_{j=1}^i\log Z_j>-i\epsilon)<A r^i$, by assumption (e). 
Thus, another application of the Borel Cantelli lemma gives the existence of an almost surely finite random constant $C_2(\omega)$ such that for all $i\in\mathbb{N}$
\begin{align}\label{borel c 2}
\sum_{j=1}^i\log Z_j\le-i\epsilon+C_2(\omega).
\end{align}
Plugging the bounds from equations~(\ref{borel c 1}) and (\ref{borel c 2}) into equation~(\ref{triangle repeat}) gives
\begin{align*}
  |\varphi^{-n}(x_{1})-\varphi^{-m}(x_{2})|
  \le \sum_{i=m+1}^nC_1(\omega)(i)^2e^{-i\epsilon+C_2(\omega)}.
  \end{align*}
Therefore $\varphi^{-n}(x_{1})$ is almost surely Cauchy and thus $Y_{1}$ exists almost surely. 
Since $x_1$ and $x_2$ were arbitrary, $Y_{1}$ is independent of the $x$ used in its definition. The proof for $Y_{0}$ is similar.
\qquad\end{proof}

The random variables $Y_{1}$ and $Y_{0}$ satisfy the following invariance properties.
\begin{proposition}\label{invariance}
Let $\tau_{0}$ and $\tau_{1}$ be independent draws from $\mu_{0}$ and $\mu_{1}$. Then
\begin{align}\label{eq:invariance}
Y_{0}  \eqdist\Phi^{0}_{\tau_{0}}(Y_{1})
\quad\text{and}\quad
Y_{1} \eqdist \Phi^{1}_{\tau_{1}}(Y_{0})
\end{align}
where $\eqdist$ denotes equal in distribution.
\end{proposition}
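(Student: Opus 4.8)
The plan is to combine the pullback representations $Y_{1}(\omega)=\lim_{n}\varphi^{-n}_{\omega}(x)$ and $Y_{0}(\omega)=\lim_{n}\gamma^{-n}_{\omega}(x)$ from Proposition~\ref{prop:pullback} with two observations: an algebraic re-grouping of the long composition defining $\varphi^{-n}_{\omega}$ (and, symmetrically, $\gamma^{-n}_{\omega}$), together with the fact that the contractivity in assumption~(e) makes the point fed into such a composition asymptotically irrelevant, even if that point is allowed to drift with $n$.

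For the re-grouping step, in the composition $\varphi^{-n}_{\omega}=\Phi^{1}_{\tau^{1}_{1}}\Phi^{0}_{\tau^{1}_{0}}\Phi^{1}_{\tau^{2}_{1}}\Phi^{0}_{\tau^{2}_{0}}\cdots\Phi^{1}_{\tau^{n}_{1}}\Phi^{0}_{\tau^{n}_{0}}$ I would pair each $\Phi^{0}_{\tau^{k}_{0}}$ with the \emph{following} $\Phi^{1}_{\tau^{k+1}_{1}}$ instead of with $\Phi^{1}_{\tau^{k}_{1}}$. Reading off the definitions of $F^{k}$ and $\gamma^{-m}$ then gives
\begin{align*}
\varphi^{-n}_{\omega}(x)=\Phi^{1}_{\tau^{1}_{1}}\Big(\gamma^{-(n-1)}_{\widehat\omega}\big(\Phi^{0}_{\tau^{n}_{0}}(x)\big)\Big),
\end{align*}
where $\widehat\omega$ is the switching environment whose $k$-th coordinate is the pair $(\tau^{k}_{0},\tau^{k+1}_{1})$. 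This rewriting is useful for two reasons: the maps composed inside $\gamma^{-(n-1)}_{\widehat\omega}$ are genuinely the $F$-type maps of the environment $\widehat\omega$; and $\widehat\omega$ is built from the variables $\tau^{1}_{0},\tau^{2}_{0},\dots$ and $\tau^{2}_{1},\tau^{3}_{1},\dots$, which are mutually independent, so $\widehat\omega\eqdist\omega$, while $\tau^{1}_{1}$ does not enter $\widehat\omega$ at all and is therefore independent of every function of $\widehat\omega$. A mirror-image computation, pairing each $\Phi^{1}_{\tau^{k}_{1}}$ with the following $\Phi^{0}_{\tau^{k+1}_{0}}$, yields
\begin{align*}
\gamma^{-n}_{\omega}(x)=\Phi^{0}_{\tau^{1}_{0}}\Big(\varphi^{-(n-1)}_{\overline\omega}\big(\Phi^{1}_{\tau^{n}_{1}}(x)\big)\Big),
\end{align*}
where $\overline\omega$ has $k$-th coordinate $(\tau^{k+1}_{0},\tau^{k}_{1})$; again $\overline\omega\eqdist\omega$ and $\tau^{1}_{0}$ is independent of every function of $\overline\omega$.

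Next I would let $n\to\infty$. By assumption~(d) the maps $\Phi^{i}_{t}$ are Lipschitz, hence continuous, so $\Phi^{1}_{\tau^{1}_{1}}$ (resp.\ $\Phi^{0}_{\tau^{1}_{0}}$) may be taken outside the limit, and it remains to see that $\gamma^{-(n-1)}_{\widehat\omega}\big(\Phi^{0}_{\tau^{n}_{0}}(x)\big)\to Y_{0}(\widehat\omega)=\lim_{m}\gamma^{-m}_{\widehat\omega}(x)$ almost surely, i.e.\ that replacing the fixed starting point $x$ by the moving point $\Phi^{0}_{\tau^{n}_{0}}(x)$ does not alter the limit. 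Letting $L_{m}$ be the Lipschitz constant of $\gamma^{-m}_{\widehat\omega}$, assumption~(d) gives $L_{m}\le\prod_{k=1}^{m}K_{0}(\tau^{k}_{0})K_{1}(\tau^{k+1}_{1})$; taking logarithms, the summands are i.i.d., so the strong law of large numbers together with Jensen's inequality and assumption~(e) give $\limsup_{m}\tfrac1m\log L_{m}\le\E\log K_{0}(\tau_{0})+\E\log K_{1}(\tau_{1})\le\log\big(\E K_{0}(\tau_{0})\,\E K_{1}(\tau_{1})\big)<0$ almost surely, so $L_{m}\to0$ exponentially fast. On the other hand, the variables $\Phi^{0}_{\tau^{n}_{0}}(x)$, $n\ge1$, are i.i.d.\ with finite norm by assumption~(c), so $|\Phi^{0}_{\tau^{n}_{0}}(x)|/n\to0$ almost surely by Borel--Cantelli; hence
\begin{align*}
\big|\gamma^{-(n-1)}_{\widehat\omega}\big(\Phi^{0}_{\tau^{n}_{0}}(x)\big)-\gamma^{-(n-1)}_{\widehat\omega}(x)\big|\le L_{n-1}\,\big|\Phi^{0}_{\tau^{n}_{0}}(x)-x\big|\longrightarrow0
\end{align*}
almost surely, and the claimed convergence follows from Proposition~\ref{prop:pullback}. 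This produces $Y_{1}(\omega)=\Phi^{1}_{\tau^{1}_{1}}\big(Y_{0}(\widehat\omega)\big)$ and, symmetrically, $Y_{0}(\omega)=\Phi^{0}_{\tau^{1}_{0}}\big(Y_{1}(\overline\omega)\big)$, both almost surely.

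Finally I would read off \eqref{eq:invariance}. Since $\widehat\omega\eqdist\omega$ we have $Y_{0}(\widehat\omega)\eqdist Y_{0}$; since $\tau^{1}_{1}$ is an independent $\mu_{1}$-draw and is independent of $Y_{0}(\widehat\omega)$, the pair $(\tau^{1}_{1},Y_{0}(\widehat\omega))$ has the same joint law as $(\tau_{1},Y_{0})$ with $\tau_{1}$ an independent $\mu_{1}$-draw, and applying the (jointly measurable, by~(b) and~(d)) map $(t,y)\mapsto\Phi^{1}_{t}(y)$ gives $Y_{1}=\Phi^{1}_{\tau^{1}_{1}}(Y_{0}(\widehat\omega))\eqdist\Phi^{1}_{\tau_{1}}(Y_{0})$, the second identity in \eqref{eq:invariance}. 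The first identity follows in exactly the same way from $Y_{0}(\omega)=\Phi^{0}_{\tau^{1}_{0}}(Y_{1}(\overline\omega))$ and the independence of $\tau^{1}_{0}$ from $Y_{1}(\overline\omega)$. The one step I expect to require real care is the ``moving initial point'' argument: it needs the almost sure \emph{exponential} decay of the composed Lipschitz constants --- obtained from the multiplicative-ergodic/strong-law argument above rather than from the mere summability used in the proof of Proposition~\ref{prop:pullback} --- played off against the sublinear growth of the discarded boundary term $\Phi^{0}_{\tau^{n}_{0}}(x)$; the remaining steps are routine bookkeeping with iterated compositions and with independence and equality in distribution.
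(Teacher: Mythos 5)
Your argument is correct, and its core is the same peeling-off identity the paper uses: your pathwise re-grouping $\gamma^{-n}_{\omega}(x)=\Phi^{0}_{\tau^{1}_{0}}\bigl(\varphi^{-(n-1)}_{\overline\omega}\bigl(\Phi^{1}_{\tau^{n}_{1}}(x)\bigr)\bigr)$ is exactly the paper's one-line claim $\gamma^{-n}(y)\eqdist\Phi^{0}_{\tau_{0}}\bigl(\varphi^{n-1}(\Phi^{1}_{\tau_{1}}(y))\bigr)$, read almost surely inside a single realization (with the index-shifted environment $\overline\omega$) instead of in distribution with fresh draws. The difference is in how the limit is taken. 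The paper keeps everything distributional, pulls $\Phi^{0}_{\tau_{0}}$ through the limit by continuity, and identifies the inner limit via Proposition~\ref{prop:pullback}; this is shorter, but as written it takes $\lim_{n}\varphi^{n-1}$ of the \emph{forward} composition, which converges only in distribution rather than almost surely, and it evaluates the pullback limit at a random initial point while Proposition~\ref{prop:pullback} is stated for deterministic ones. Your pathwise route pays for its extra rigor with the ``moving initial point'' step, which you handle correctly: exponential a.s.\ decay of the composed Lipschitz constants (strong law plus Jensen from assumption (e)) against sublinear a.s.\ growth of $|\Phi^{0}_{\tau^{n}_{0}}(x)|$ (Borel--Cantelli from assumption (c)); the concluding bookkeeping ($\widehat\omega\eqdist\omega$, independence of $\tau^{1}_{1}$ from $\widehat\omega$, joint measurability of $(t,y)\mapsto\Phi^{1}_{t}(y)$) is sound. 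One simplification worth noting: you can avoid the multiplicative-ergodic estimate entirely by observing that $\E\bigl[L_{n-1}\,\bigl|\Phi^{0}_{\tau^{n}_{0}}(x)-x\bigr|\bigr]=\bigl(\E K_{0}(\tau_{0})\,\E K_{1}(\tau_{1})\bigr)^{n-1}\E\bigl|\Phi^{0}_{\tau_{0}}(x)-x\bigr|$ is summable, so the error terms tend to $0$ almost surely by the same summability device already used in the proof of Proposition~\ref{prop:pullback}.
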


\begin{proof}
Let $y\in X$ and observe that for any $n\in\mathbb{N}$, we have that
\begin{align*}
\gamma^{-n}(y) \eqdist \Phi^{0}_{\tau_{0}}\left(\varphi^{n-1}(\Phi^{1}_{\tau_{1}}(y))\right).
\end{align*}
Taking the limit as $n\to\infty$ yields
\begin{align}\label{take limit}
\lim_{n\to\infty}\gamma^{-n}(y) \eqdist \lim_{n\to\infty}\Phi^{0}_{\tau_{0}}\left(\varphi^{n-1}(\Phi^{1}_{\tau_{1}}(y))\right)
= \Phi^{0}_{\tau_{0}}\left(\lim_{n\to\infty}\varphi^{n-1}(\Phi^{1}_{\tau_{1}}(y))\right)
\end{align}
since $\Phi_{t}^{0}(x)$ is continuous in $x$ for each $t$. Recalling that the definitions of $Y_{0}$ and $Y_{1}$ in Equation~(\ref{pullback 0}) are independent of $x$ by Proposition~\ref{prop:pullback}, we have that Equation~(\ref{take limit}) becomes $Y_{0}\eqdist\Phi^{0}_{\tau_{0}}(Y_{1})$. The proof that $Y_{1} \eqdist \Phi^{1}_{\tau_{1}}(Y_{0})$ is similar.
\qquad\end{proof}

\begin{proposition}\label{prop:invariant set}
Suppose there exists a nonempty 
set $S\subset X$ so that for all $t\ge0$, $\Phi^{i}_{t}:S\to S$ for $i=0$ and 1. Then $Y_{0}$ and $Y_{1}$ are in the closure, $\bar{S}$, almost surely.
\end{proposition}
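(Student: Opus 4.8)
The plan is to exploit the $x$-independence of the pullback limits established in Proposition~\ref{prop:pullback}. Since $S$ is nonempty, fix any point $x_{0}\in S$. By hypothesis, each map $\Phi^{0}_{t}$ and $\Phi^{1}_{t}$ sends $S$ into $S$, so every composition of such maps does as well. In particular, for each fixed $\omega\in\Omega$ and each natural number $k$, the maps
\begin{align*}
G^{k}_{\omega}=\Phi^{1}_{\tau^{k}_{1}}\circ\Phi^{0}_{\tau^{k}_{0}}
\qquad\text{and}\qquad
F^{k}_{\omega}=\Phi^{0}_{\tau^{k}_{0}}\circ\Phi^{1}_{\tau^{k}_{1}}
\end{align*}
restrict to maps $S\to S$. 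Composing these, we conclude that $\varphi^{-n}_{\omega}(x_{0})\in S$ and $\gamma^{-n}_{\omega}(x_{0})\in S$ for every $n$ and every $\omega$.

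Next I would invoke Proposition~\ref{prop:pullback}: for $\P$-almost every $\omega$ the limits $\lim_{n\to\infty}\varphi^{-n}_{\omega}(x_{0})$ and $\lim_{n\to\infty}\gamma^{-n}_{\omega}(x_{0})$ exist, and they equal $Y_{1}(\omega)$ and $Y_{0}(\omega)$ respectively, because those random variables were shown to be independent of the point used in their definition. Thus, on a set of full measure, $Y_{1}(\omega)$ and $Y_{0}(\omega)$ are limits of sequences lying entirely in $S$, hence they lie in the closure $\bar{S}$. This completes the argument.

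There is essentially no hard step here; the only point requiring care is the order of quantifiers, namely that we first use nonemptiness to produce a legitimate initial condition $x_{0}\in S$, and only then apply Proposition~\ref{prop:pullback} to identify the almost sure limit started at $x_{0}$ with $Y_{0}$ and $Y_{1}$. If one wished to state slightly more, the same reasoning shows that any closed forward-invariant set (e.g.\ $\bar{S}$ itself) almost surely contains both pullbacks, and if $S$ is already closed one gets $Y_{0},Y_{1}\in S$ directly.
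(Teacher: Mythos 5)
Your proof is correct and follows essentially the same route as the paper: start the backward iterates at a point of $S$, note that forward-invariance of $S$ under $\Phi^{0}_{t}$ and $\Phi^{1}_{t}$ keeps all iterates in $S$, and then use Proposition~\ref{prop:pullback} (almost sure existence of the limits and their independence of the initial point) to conclude $Y_{0},Y_{1}\in\bar{S}$ almost surely. Nothing further is needed.
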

\begin{proof}
If $x\in S$, then $\varphi^{-n}(x)\in S$ almost surely for all $n\ge0$ by assumption. Thus, $\lim_{n\to\infty}\varphi^{-n}(x) = Y_{1}\in \bar{S}$ almost surely. But by Proposition~\ref{prop:pullback}, the random variable $Y_{1}$ is independent of the initial $x$ used in its definition, so $Y_{1}\in \bar{S}$ almost surely. The proof for $Y_{0}$ is similar.
\qquad\end{proof}

%%%%%%%%%%%%%%%%%%%%%%%%%%%%%%%%%%%%%%%%%%%%%%%%%%%%%%%%%%%%%%%

\subsection{Continuous-time process}\label{section:cts}
To define the continuous time process, we need more notation. Much of the following notation is standard in renewal theory.
For each $\omega\in\Omega$ and natural number $n$, define
\begin{align*}
S_{n} & := \displaystyle\sum_{k=1}^{n}\big(\tau^{k}_{0}+\tau^{k}_{1}\big)
\end{align*}
with $S_{0}:=0$. Define $S'_{n+1} := S_{n}+\tau^{n+1}_{0}$ for $n\ge0$. Observe that $S'_{n}<S_{n}<S'_{n+1}<S_{n+1}$ by definition. Define
\begin{align*}
N_{t} & := \max\{n\ge0:S_{n}\le t\}.
\end{align*}
We also define the state process $J_{t}$ for $t\ge0$ by
\begin{align}\label{z defn}
J_{t} & := \begin{cases}
0 & \text{$S_{N_{t}}\le t< S'_{N_{t}+1}$}\\
1 & \text{$S'_{N_{t}+1}\le t$}.
\end{cases}
\end{align}
Finally, for $t\ge0$, define the elapsed time since the last switch, often called the age process, by
\begin{align*}
a_{t} & :=  J_{t}(t-S'_{N_{t}+1}) + (1-J_{t})(t-S_{N_{t}}).
\end{align*}

We are now ready to define our continuous-time $X$-valued process. For $u_{0}\in X$, $\omega\in\Omega$, and $t\ge0$, define
\begin{align}\label{process definition}
u(t,\omega) & = J_{t}\Phi^{1}_{a_{t}}\circ \Phi^{0}_{\tau^{N_{t}+1}_{0}}(\varphi^{N_{t}}(u_{0})) + (1-J_{t})\Phi^{0}_{a_{t}}(\varphi^{N_{t}}(u_{0})).
\end{align}

%%%%%%%%%%%%%%%%%%%%%%%%%%%%%%%%%%%%%%%%%%%%%%%%%%%%%%%%%%%%%%%

\subsection{Convergence in distribution to mixture of pullbacks}\label{section:conv}
In this section we will find the limiting distribution of $u(t)$ as $t\to\infty$. In order to describe this limiting distribution, we will need to define three more random variables.
Define $a^{0}$ and $a^{1}$ to be two random variables with the following cumulative distribution functions:
\begin{align*}
\P(a^{0}\le x) & = 
\frac{\E\min(\tau_{0},x)}{\E\tau_{0}}
\quad\text{and}\quad
\P(a^{1}\le x)  = 
\frac{\E\min(\tau_{1},x)}{\E\tau_{1}}.
\end{align*}
We will see in Lemma~\ref{age convergence} that the distributions of
$a^{0}$ and $a^{1}$ can be thought of as the limiting distributions of
the age process conditioned on either $J_{t}=0$ or 1.  Let $\xi$ be a
Bernoulli random variable with parameter
$p:=(\E\tau_{1})/(\E\tau_{0}+\E\tau_{1})$, the probability that
$J_{t}=1$ at large time. Assume $a^{0}$, $a^{1}$, and $\xi$ are all
chosen to be mutually independent and independent of
$(\tau_{0}^{k},\tau_{1}^{k})$ for every $k$.  Recall that a measure
$\mu$ on the real line is said to be arithmetic if there exist a $d>0$
so that $\mu(\{0,d,2d,\dots\})=1$.

\begin{theorem}\label{conv in dist}
  Suppose $\Phi^{0}_{t}$ and $\Phi^{1}_{t}$ satisfy assumptions
  (a)-(e) of Section~\ref{section:discrete}. Let $u(t)$ be defined as
  in Equation~(\ref{process definition}), and $a^{0}$, $a^{1}$, and
  $\xi$ as in the above paragraph. If the switching time
  distributions, $\mu_{0}$ and $\mu_{1}$, are non-arithmetic, then we
  have the following convergence in distribution as $t\to\infty$.
%\begin{revision}
\begin{align*}
(u(t),J_t) \indist (\bar{u},\xi)\quad\text{as }t\to\infty,
\end{align*}
where $\bar{u}:=\xi\Phi^{1}_{a^{1}}(Y_{0}) + (1-\xi)\Phi^{0}_{a^{0}}(Y_{1})$.
%\end{revision}
\end{theorem}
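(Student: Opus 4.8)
The plan is to prove $\E[g(u(t))]\to\E[g(\bar u)]$ for every bounded Lipschitz $g:X\to\mathbb{R}$, which characterizes convergence in distribution on the separable Banach space $X$. Fix such a $g$. For each $t$ and $\omega$ exactly one of the disjoint events $\{S_n\le t<S'_{n+1}\}$ (on which $J_t=0$, $a_t=t-S_n$, and $\varphi^{N_t}(u_0)=\varphi^n(u_0)$) and $\{S'_{n+1}\le t<S_{n+1}\}$ (on which $J_t=1$, $a_t=t-S'_{n+1}$) occurs, so \eqref{process definition} gives
\begin{align*}
\E[g(u(t))] &=\sum_{n\ge0}\E\Big[g\big(\Phi^0_{t-S_n}(\varphi^n(u_0))\big)\,\mathbf{1}_{\{S_n\le t<S'_{n+1}\}}\Big]\\
&\qquad+\sum_{n\ge0}\E\Big[g\big(\Phi^1_{t-S'_{n+1}}(\Phi^0_{\tau^{n+1}_0}(\varphi^n(u_0)))\big)\,\mathbf{1}_{\{S'_{n+1}\le t<S_{n+1}\}}\Big].
\end{align*}
I would treat the two sums, corresponding to $J_t=0$ and $J_t=1$, separately.

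For the first sum, note that $S'_{n+1}=S_n+\tau^{n+1}_0$ with $\tau^{n+1}_0$ independent of $\sigma(\varphi^n(u_0),S_n)$; conditioning on this $\sigma$-algebra and integrating out $\tau^{n+1}_0$ turns $\mathbf{1}_{\{t<S'_{n+1}\}}$ into $\overline F_0(t-S_n)$, where $\overline F_0(y):=\mu_0((y,\infty))$. Hence the first sum equals $\sum_{n\ge0}\E[\psi_0(\varphi^n(u_0),t-S_n)\mathbf{1}_{\{S_n\le t\}}]$ with $\psi_0(v,y):=g(\Phi^0_y(v))\,\overline F_0(y)$. Now $(\varphi^n(u_0),S_n)_{n\ge0}$ is a Markov random walk: $\varphi^n(u_0)=G^n(\varphi^{n-1}(u_0))$ and the increment $S_n-S_{n-1}=\tau^n_0+\tau^n_1$ is a function of the same draw $(\tau^n_0,\tau^n_1)$ driving the $n$-th transition. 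The driving chain $\varphi^n(u_0)$ converges in distribution, at a geometric rate, to its unique stationary law — the law of $Y_1$ — because $\varphi^n(u_0)\eqdist\varphi^{-n}(u_0)$ (the defining draws are i.i.d., so reversing their order leaves the law of the composition unchanged) and $\varphi^{-n}(u_0)\to Y_1$ almost surely and in $L^1$ with geometric rate by the estimate in the proof of Proposition~\ref{prop:pullback}; the mean increment is $\E\tau_0+\E\tau_1\in(0,\infty)$ by assumption~(c). Since $\mu_0$, hence $\mu_0*\mu_1$, is non-arithmetic, and $\psi_0(v,\cdot)$ is continuous (assumption~(b)) and dominated by the directly Riemann integrable function $\|g\|_\infty\overline F_0$, the Markov renewal theorem gives
\[
\sum_{n\ge0}\E\big[\psi_0(\varphi^n(u_0),t-S_n)\mathbf{1}_{\{S_n\le t\}}\big]\ \xrightarrow[t\to\infty]{}\ \frac{1}{\E\tau_0+\E\tau_1}\int_0^\infty\E\big[g(\Phi^0_y(Y_1))\big]\,\overline F_0(y)\,dy.
\]

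An entirely parallel argument handles the second sum: one integrates out $\tau^{n+1}_1$ against $\overline F_1(y):=\mu_1((y,\infty))$ using $S_{n+1}=S'_{n+1}+\tau^{n+1}_1$, and observes that the values at the times $S'_{n+1}$, namely $H'_{n+1}:=\Phi^0_{\tau^{n+1}_0}(\varphi^n(u_0))$, form a Markov random walk when paired with $S'_{n+1}$ — indeed $H'_{n+1}=\Phi^0_{\tau^{n+1}_0}\circ\Phi^1_{\tau^n_1}(H'_n)$ and $S'_{n+1}-S'_n=\tau^n_1+\tau^{n+1}_0$ — whose driving chain has exactly the transition structure of the chain $\gamma^n$ and hence, by Proposition~\ref{prop:pullback}, stationary law equal to that of $Y_0$. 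This shows the second sum tends to $\frac{1}{\E\tau_0+\E\tau_1}\int_0^\infty\E[g(\Phi^1_y(Y_0))]\,\overline F_1(y)\,dy$. Finally I would identify the two limits with the pieces of $\bar u$: from $\P(a^i\le y)=\E\min(\tau_i,y)/\E\tau_i$ one gets $\overline F_i(y)\,dy=\E\tau_i\,d\P(a^i\le y)$, and since $a^0$ and $a^1$ are independent of all the $(\tau^k_0,\tau^k_1)$ — hence of $Y_1$ and $Y_0$ — Fubini gives $\frac{1}{\E\tau_0+\E\tau_1}\int_0^\infty\E[g(\Phi^0_y(Y_1))]\overline F_0(y)\,dy=(1-p)\,\E[g(\Phi^0_{a^0}(Y_1))]$ and likewise the other limit equals $p\,\E[g(\Phi^1_{a^1}(Y_0))]$, with $p=\E\tau_1/(\E\tau_0+\E\tau_1)$. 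Adding and recalling that $\xi$ is Bernoulli with parameter $p$, independent of everything, this is precisely $\E[g(\bar u)]$.

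The crux is the invocation of the Markov renewal theorem. The phase $\varphi^n(u_0)$ at a renewal and the renewal clock $S_n$ are built from the same randomness, so they cannot be decoupled by any naive coupling; what saves the argument is the Markov-additive structure — the phase is a Markov chain that is contracting on average, and the clock increments couple to it only through the current transition — so the real work is verifying the hypotheses under which the key renewal theorem extends to this Markov-modulated, Banach-space–valued setting: geometric ergodicity of the phase chain (from Proposition~\ref{prop:pullback} and the estimate in its proof), finiteness of the mean cycle length, and the non-arithmetic hypothesis on $\mu_0,\mu_1$. Note that the scalar specialization in which $g$ depends on $u(t)$ only through $(J_t,a_t)$ is exactly Lemma~\ref{age convergence}, so the present proof can be organized as a refinement of that lemma that additionally tracks the phase variable.
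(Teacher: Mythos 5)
Your overall architecture is sound and genuinely different from the paper's: the cycle decomposition of $\E[g(u(t))]$ according to $J_t=0$ or $1$, the observation that $(\varphi^n(u_0),S_n)$ and $(\Phi^0_{\tau_0^{n+1}}(\varphi^n(u_0)),S'_{n+1})$ are Markov random walks whose driving chains have the stationary laws of $Y_1$ and $Y_0$ (Proposition~\ref{prop:pullback}), and the final identification $\frac{1}{\E\tau_0+\E\tau_1}\int_0^\infty\E[g(\Phi^0_y(Y_1))]\overline F_0(y)\,dy=(1-p)\E[g(\Phi^0_{a^0}(Y_1))]$ (and its twin) are all correct bookkeeping, and they reproduce exactly the limit $\E[g(\bar u)]$ in Theorem~\ref{conv in dist}. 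By contrast, the paper never invokes Markov renewal theory: it proves joint convergence in distribution of $(a_t,J_t\tau_0^{N_t+1},J_t,\varphi^{N_t}(u_0))$ and applies the continuous mapping theorem, using the classical alternating renewal theorem only for the scalar pair $(a_t,J_t)$ (Lemma~\ref{age convergence}), which is an honest renewal process not modulated by the phase.

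The gap is the single sentence ``the Markov renewal theorem gives.'' That step is not an off-the-shelf citation in this setting. The standard Markov renewal theorems (Kesten; Athreya--McDonald--Ney; Alsmeyer) require the driving chain to be Harris recurrent, or at least to admit a regeneration/minorization structure; an iterated-function-system chain on a separable Banach space that is only contracting on average is in general \emph{not} Harris recurrent (its invariant measure can be singular and two copies started from different points never couple exactly), and the ``geometric ergodicity'' you extract from Proposition~\ref{prop:pullback} is convergence in a weak/Wasserstein sense, which is not among the hypotheses of any standard version of the theorem. Moreover the phase $\varphi^n(u_0)$ is determined mainly by the most recent draws, i.e.\ by exactly the same randomness that governs the local behavior of the clock $S_n$ near $t$, so the dependence you must average out is concentrated where the key renewal theorem is most sensitive; handling it is the actual content of the theorem, not a routine verification. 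The paper's conditioning-and-exchangeability machinery (Lemma~\ref{phi order}, the backwards-martingale Lemma~\ref{backwards convergence}, and the Hewitt--Savage argument in Lemma~\ref{asymp indep}) exists precisely to supply this decoupling with elementary tools. So to complete your route you would have to either prove a Markov renewal theorem valid for average-contracting IFS phases (essentially redoing that work), or locate and verify the hypotheses of a specific such result in the literature; listing ``geometric ergodicity, finite mean cycle length, non-arithmeticity'' and deferring ``the real work'' leaves the crux of Theorem~\ref{conv in dist} unproved.
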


%\begin{revision}
The pullbacks $Y_0$ and $Y_1$ give the invariant distributions of the discrete Markov processes $\varphi^n$ and $\gamma^n$ defined in Equation~(\ref{discrete process}) (see Proposition~\ref{prop:pullback} and Remark~\ref{discrete remark}). Thus, Theorem~\ref{conv in dist} describes the limiting distribution of the (not necessarily Markovian) continuous process $(u(t),J_t)$ in terms of the invariant distributions of related Markov processes. Stated colloquially, Theorem~\ref{conv in dist} says that to go from the invariant distributions of the discrete Markov processes to the limiting distribution of the continuous process, one must do the following: first flip a coin with parameter $p$ to decide which map ($\Phi^0$ or $\Phi^1$) is being applied and then apply that map (say it's $\Phi^i$) to pullback $Y_{1-i}$ for time $a^i$, where $a^i$ is the amount of time since the last switch given that $\Phi^i$ is currently being applied.

In the context of piecewise deterministic Markov processes given by switching ordinary differential equations, the authors of \cite{benaim12qual} relate the invariant measure of the continuous Markov process to the imbedded discrete Markov chain. In particular, in Proposition 2.4 of \cite{benaim12qual}, the authors show that the ergodic probability measures of the continuous and discrete processes form a one to one correspondence and hence the continuous process is stable if and only if the discrete process is stable.

In the following corollary, we show that if the switching time distributions are exponential so that the continuous process $(u(t),J_t)$ is Markov, then its limiting distribution actually is the invariant distribution of the embedded discrete Markov process (if the initial map is either $\Phi^1$ or $\Phi^0$ with probability $p$ or $1-p$). The corollary follows immediately from Proposition~\ref{invariance} and Theorem~\ref{conv in dist} since the age of a Poisson process is exponentially distributed.

%\end{revision}
\begin{corollary}\label{cor conv}
  Suppose the switching time distributions, $\mu_{0}$ and $\mu_{1}$,
  are exponential with respective rate parameters $r_{0}$ and
  $r_{1}$. If $\xi$ is Bernoulli with parameter $r_{0}/(r_{0}+r_{1})$,
  then we have the following convergence in distribution as
  $t\to\infty$.
\begin{align*}
%\begin{revision}
(u(t),J_t) \indist (\bar{u},\xi)\quad\text{as }t\to\infty,
\end{align*}
where $\bar{u}:=\xi Y_{1} + (1-\xi)Y_{0}$.
%\end{revision}
\end{corollary}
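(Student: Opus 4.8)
The plan is to derive the corollary directly from Theorem~\ref{conv in dist} and Proposition~\ref{invariance}, exploiting two special features of the exponential law: it is non-arithmetic, so the hypotheses of Theorem~\ref{conv in dist} are satisfied, and it is memoryless, so the limiting age variables $a^{0}$ and $a^{1}$ turn out to have the same laws as the switching times $\tau_{0}$ and $\tau_{1}$ themselves. These two facts together let me collapse the maps $\Phi^{1}_{a^{1}}$ and $\Phi^{0}_{a^{0}}$ in the limit of Theorem~\ref{conv in dist} using the invariance relations of Proposition~\ref{invariance}.

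First I would check that the parameters line up. Since an exponential law with rate $r_{i}$ has mean $\E\tau_{i}=1/r_{i}$, the Bernoulli parameter $p=\E\tau_{1}/(\E\tau_{0}+\E\tau_{1})$ appearing in Theorem~\ref{conv in dist} simplifies to $r_{0}/(r_{0}+r_{1})$, exactly the parameter assigned to $\xi$ in the corollary. As exponential distributions are absolutely continuous, they assign zero mass to any countable set, in particular to $\{0,d,2d,\dots\}$ for every $d>0$, so they are non-arithmetic; hence Theorem~\ref{conv in dist} applies and yields $u(t)\indist\xi\Phi^{1}_{a^{1}}(Y_{0})+(1-\xi)\Phi^{0}_{a^{0}}(Y_{1})$. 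Next I would compute the age laws: for exponential $\tau_{i}$ one has $\E\min(\tau_{i},x)=\int_{0}^{x}e^{-r_{i}s}\,ds=(1-e^{-r_{i}x})/r_{i}$, so that $\P(a^{i}\le x)=\E\min(\tau_{i},x)/\E\tau_{i}=1-e^{-r_{i}x}$; thus $a^{i}$ is again exponential with rate $r_{i}$, giving $a^{i}\eqdist\tau_{i}$. Because $a^{1}$ is independent of $Y_{0}$ by construction and shares the law of the independent draw $\tau_{1}$ of Proposition~\ref{invariance}, the pairs $(a^{1},Y_{0})$ and $(\tau_{1},Y_{0})$ have the same joint law, and applying the measurable map $(t,y)\mapsto\Phi^{1}_{t}(y)$ gives $\Phi^{1}_{a^{1}}(Y_{0})\eqdist\Phi^{1}_{\tau_{1}}(Y_{0})\eqdist Y_{1}$. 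The identical argument yields $\Phi^{0}_{a^{0}}(Y_{1})\eqdist Y_{0}$.

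The main obstacle is that these component-wise distributional identities do not transfer to the mixture for free, since $Y_{0}$ and $Y_{1}$ are constructed from the same environment $\omega$ and are therefore correlated; one cannot simply substitute inside the sum. To handle this I would use that $\xi$ is independent of everything else. Conditioning on $\xi$, the law of $\xi\Phi^{1}_{a^{1}}(Y_{0})+(1-\xi)\Phi^{0}_{a^{0}}(Y_{1})$ is the mixture $p\,\mathrm{Law}(\Phi^{1}_{a^{1}}(Y_{0}))+(1-p)\,\mathrm{Law}(\Phi^{0}_{a^{0}}(Y_{1}))$, and likewise $\xi Y_{1}+(1-\xi)Y_{0}$ has law $p\,\mathrm{Law}(Y_{1})+(1-p)\,\mathrm{Law}(Y_{0})$. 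Only the two marginal laws enter each mixture, so the identities $\Phi^{1}_{a^{1}}(Y_{0})\eqdist Y_{1}$ and $\Phi^{0}_{a^{0}}(Y_{1})\eqdist Y_{0}$ from the previous step let me equate the two mixtures term by term, yielding $\bar{u}\eqdist\xi Y_{1}+(1-\xi)Y_{0}$ and completing the proof.
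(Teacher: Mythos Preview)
Your proof is correct and follows exactly the route the paper indicates: it invokes Theorem~\ref{conv in dist} (exponential laws being non-arithmetic), uses the memoryless property to identify $a^{i}\eqdist\tau_{i}$, and then applies Proposition~\ref{invariance} to collapse $\Phi^{1}_{a^{1}}(Y_{0})$ and $\Phi^{0}_{a^{0}}(Y_{1})$ to $Y_{1}$ and $Y_{0}$. The paper compresses all of this into a single sentence, while you have carefully spelled out the computation of the age law and the mixture argument justifying the substitution under the independent Bernoulli $\xi$; these details are sound and the approach is the same.
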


{\em Proof of Theorem~\ref{conv in dist}}.
%\begin{revision}
In light of Proposition~\ref{invariance}, it is enough to prove the desired convergence in distribution for $\bar{u}:=\Phi^{1}_{a^{1}}\circ \Phi^{0}_{\tau_{0}}(Y_{1}) + (1-\xi)\Phi^{0}_{a^{0}}(Y_{1})$,
%\end{revision}
where $\tau_{0}$ is an independent draw from $\mu_{0}$.
%\begin{revision}
Recall that if $Z$ is a random variable taking values in some metric space and a Borel subset $S$ of that metric space satisfies $\P(Z\in\partial S)=0$ where $\partial S$ denotes the boundary of $S$, then $S$ is called a continuity set of $Z$. Let $A$, $B$, $C$, and $D$ be continuity sets of $\xi a^{1}+(1-\xi)a^{0}$, $\xi\tau_{0}$, $\xi$, and $Y_{1}$, respectively.
%\end{revision}
We will show that
\begin{multline}\label{convergence on rectangles}
\P(a_{t}\in A, J_{t}\tau^{N_{t}+1}_{0}\in B, J_{t}\in C, \varphi^{N_{t}}(u_{0})\in D)\\
 \to \P(\xi a^{1}+(1-\xi)a^{0}\in A, \xi\tau_{0}\in B, \xi\in C, Y_{1}\in D)\quad\text{as}\quad t\rightarrow\infty.
\end{multline}
%\begin{revision}

Once this convergence is shown, the conclusion of the theorem quickly follows. To see this, assume the convergence in Equation~(\ref{convergence on rectangles}) holds. Define the $(\mathbb{R}^{3}\times X)$-valued random variable $Y_{t}:=(a_{t},J_{t}\tau^{N_{t}+1}_{0},J_{t},\varphi^{N_{t}})$, where we have suppressed the $u_0$ dependence. We will usually suppress this dependence since the limiting random variables don't depend on the initial $u_0$ (see Proposition~\ref{prop:pullback}). Since $X$ is assumed to be separable, the product $\mathbb{R}^{3}\times X$ is separable and thus we can apply Theorem 2.8 in \cite{billingsley_convergence_1999} to obtain that $Y_{t}$ converges in distribution to $(\xi a^{1}+(1-\xi)a^{0}, \xi\tau_{0}, \xi, Y_{1})$ as $t\to\infty$.

Define the function $g:\mathbb{R}^{3}\times X\to X$ by $g(a,t,j,y) = j\Phi^{1}_{a}\circ \Phi^{0}_{t}(y)+(1-j)\Phi^{0}_{t}(y)$ and observe that $u(t) = g(a_{t},\tau_{0}^{N_{t+1}},J_{t},\varphi^{N_{t}})$ and $\bar{u}=g(a^{1},\tau_{0},\xi,Y)$. Further define the function $h:(\mathbb{R}^{3}\times X)\to (X\times\mathbb{R})$ by $h(a,t,j,y)=(g(a,t,j,y),j)$.
The function $h$ is continuous because $g$ is continuous and thus the conclusion of the theorem follows from the continuous mapping theorem (see, for example, Theorem 3.2.4 in \cite{durrett_probability_2010}). Therefore, it remains only to show the convergence in Equation~(\ref{convergence on rectangles}).
Our proof relies on the auxiliary Lemmas~\ref{age convergence}-\ref{asymp indep3}, whose proofs are given in Subsection~\ref{section:lemmas}. 
%\end{revision}

In what follows, we will make extensive use of indicator functions. For ease of reading, we will often denote the indicator $1_{A}=1_{A}(\omega)$ by $\{A\}=\{A\}(\omega)$.

For each $t\ge0$, define $\mathcal{F}_{t}$ to be the $\sigma$-algebra generated by $S_{N_{t}}$ and $\displaystyle\{(\tau^{k}_{0},\tau^{k}_{1})\}_{k=N_{t}+1}^{\infty}$. Since $a_{t}$, $\tau_{0}^{N_{t}+1}$, and $J_{t}$ are measurable with respect to $\mathcal{F}_{t}$, the tower property of conditional expectation and the triangle inequality give
\begin{align*}
 &\Big|\E\{a_{t}\in A,J_{t}\tau_{0}^{N_{t}+1}\in B,J_{t}\in C,\varphi^{N_{t}}\in D\} \\ &\qquad\qquad\qquad- \E\{\xi a^{1}+(1-\xi)a^{0}\in A,\xi\tau_{0}\in B,\xi\in C,Y_{1}\in D\}\Big|\\
 &\qquad \le  \Big|\E\left[\{a_{t}\in A,J_{t}\tau_{0}^{N_{t}+1}\in B,J_{t}\in C\}\E [\{\varphi^{N_{t}}\in D\}|\mathcal{F}_{t}]\right] \\
 &\qquad\qquad\qquad- \E\left[\{a_{t}\in A,J_{t}\tau_{0}^{N_{t}+1}\in B,J_{t}\in C\}\E [\{Y_{1}\in D\}]\right]\Big|\\
 &\qquad\qquad\qquad\qquad\qquad+ \Big|\E\left[\{a_{t}\in A,J_{t}\tau_{0}^{N_{t}+1}\in B,J_{t}\in C\}\E [\{Y_{1}\in D\}]\right] \\
 &\qquad\qquad\qquad\qquad\qquad\qquad- \E\{\xi a^{1}+(1-\xi)a^{0}\in A,\xi\tau_{0}\in B,\xi\in C\}\{Y_{1}\in D\}\Big|.
\end{align*}
By Lemma~\ref{asymp indep}, we have that $\E \left[\{\varphi^{N_{t}}\in D\}|\mathcal{F}_{t}\right]\to \E \left[\{Y_{1}\in D\}\right]$ almost surely as $t\to\infty$. Therefore, the first term goes to 0 by the dominated convergence theorem. Since $Y_{1}$ is independent of $\xi$, $a^{1}$, $a^{0}$, and $\tau_{0}$, the second term is bounded above by
\begin{align}\label{second term}
\Psi:=\big|\E \{a_{t}\in A,J_{t}\tau_{0}^{N_{t}+1}\in B,J_{t}\in C\} - \E \{\xi a^{1}+(1-\xi)a^{0}\in A,\xi\tau_{0}\in B,\xi\in C\}\big|.
\end{align}
To show that $\Psi$ goes to 0 as $t\to\infty$, we consider the four possible cases for the inclusion of 0 and 1 in $C$. If both 0 and 1 are not in $C$, then $\Psi$ is 0 for all $t\ge0$ since $J_{t}$ and $\xi$ are each almost surely 0 or 1.

Suppose $0\in C$ and $1\notin C$. Then the indicator function in the
first term of $\Psi$ is only non-zero if $J_{t}=0$. Hence, we can
replace $\{J_{t}\in C\}$ by $(1-J_{t})$ and
$\{J_{t}\tau_{0}^{N_{t}+1}\in B\}$ by $\{0\in B\}$. Similarly, in the
second term we replace $\{\xi\in C\}$ by $(1-\xi)$, $\{\xi\tau_{0}\in
B\}$ by $\{0\in B\}$, and $\{\xi a^{1}+(1-\xi)a^{0}\in A\}$ by
$\{a^{0}\in A\}$. Thus $\Psi$ becomes
\begin{align*}
\Psi
& = |\E \{a_{t}\in A,0\in B\}(1-J_{t}) - \E \{a^{0}\in A,0\in B\}(1-\xi)|\\
& \le |\E \{a_{t}\in A\}(1-J_{t}) - \E \{a^{0}\in A\}(1-\xi)|.
\end{align*}
By Lemma~\ref{age convergence}, this term goes to 0 as $t\to\infty$.

Suppose $1\in C$ and $0\notin C$. Then the indicator function in the
first term of $\Psi$ is only non-zero if $J_{t}=1$. Thus after
performing similar replacements to those above, $\Psi$ becomes
\begin{align*}
\Psi
& = |\E \{a_{t}\in A,\tau_{0}^{N_{t}+1}\in B\}J_{t} - \E \{a^{1}\in A,\tau_{0}\in B\}\xi|.
\end{align*}
Define $\mathcal{F}'_{t}$ to be the $\sigma$-algebra generated by $S'_{N_{t}+1}$, $\tau^{N_{t}+1}_{1}$, and $\displaystyle\{(\tau^{k}_{0},\tau^{k}_{1})\}_{k=N_{t}+2}^{\infty}$. Observe that $J_{t}$ and $a_{t}$ are both measurable with respect to $\mathcal{F}'_{t}$. Therefore, by the tower property of conditional expectation and the triangle inequality we have that
\begin{multline*}
 |\E \{a_{t}\in A,\tau_{0}^{N_{t}+1}\in B\}J_{t} - \E \{a^{1}\in A,\tau_{0}\in B\}\xi|\\
 \le |\E \left[\{a_{t}\in A\}J_{t}\E \left[\{\tau_{0}^{N_{t}+1}\in B\}|\mathcal{F}'_{t}\right]\right] - \E \left[\{a_{t}\in A\}J_{t}\E \left[\{\tau_{0}\in B\}\right]\right]|\\
 + |\E [\{a_{t}\in A\}J_{t}\E [\{\tau_{0}\in B\}]] - \E \{a^{1}\in A,\tau_{0}\in B\}\xi|.
\end{multline*}
Lemma~\ref{asymp indep2} gives us that $J_{t}\E [\{\tau_{0}^{N_{t}+1}\in B\}|\mathcal{F'}_{t}] = J_{t}\E [\{\tau_{0}^{1}\in B\}|\mathcal{F'}_{t}]$ almost surely and Lemma~\ref{asymp indep3} gives that $\E [\{\tau_{0}^{1}\in B\}|\mathcal{F'}_{t}]\to \E [\{\tau_{0}\in B\}]$ almost surely as $t\to\infty$. Therefore, the first term goes to 0 as $t\to\infty$ by the dominated convergence theorem.
Finally since $\tau_{0}$ is independent of $\xi$ and $a^{1}$, we have the following bound on the second term
\begin{equation*}
 |\E [\{a_{t}\in A\}J_{t}\E [\{\tau_{0}\in B\}]]-\E \{a^{1}\in A,\tau_{0}\in B\}\xi| \le |\E \{a_{t}\in A\}J_{t}-\E \{a^{1}\in A\}\xi|
\end{equation*}
This goes to 0 as $t\to\infty$ by Lemma~\ref{age convergence}.

Finally, if both $0\in C$ and $1\in C$, then $\Psi$ becomes
\begin{align*}
\Psi
 = &|\E \{a_{t}\in A,J_{t}\tau_{0}^{N_{t}+1}\in B\} - \E \{\xi a^{1}+(1-\xi)a^{0}\in A,\xi\tau_{0}\in B\}|\\
 \le &|\E \{a_{t}\in A,\tau_{0}^{N_{t}+1}\in B\}J_{t} - \E \{a^{1}\in A,\tau_{0}\in B\}\xi|\\
 &\qquad+ |\E \{a_{t}\in A,0\in B\}(1-J_{t}) - \E \{a^{0}\in A,0\in B\}(1-\xi)|.
\end{align*}
We've already shown that each of these terms go to zero as $t\to\infty$, so the proof is complete.\qquad\endproof

\subsection{The Lemmas}\label{section:lemmas}

We now state and prove all of the lemmas that are needed for Theorem~\ref{conv in dist}. This first lemma calculates the limiting distribution of the age process. It can be interpreted as first flipping a coin to determine if $J_{t}$ is 0 or 1, and then choosing from the limiting distribution of the age conditioned on $J_{t}$.
%\begin{revision}
\begin{lemma}\label{age convergence}
For any continuity set $A$ of $\xi a^{1}+(1-\xi)a^{0}$ we have that as $t\to\infty$
\begin{align*}
|\E \{a_{t}\in A\}J_{t}-\E \{a^{1}\in A\}\xi| + |\E \{a_{t}\in A\}(1-J_{t})-\E \{a^{0}\in A\}(1-\xi)|\to0.
\end{align*}
\end{lemma}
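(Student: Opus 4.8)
The plan is to view $(S_{n})_{n\ge0}$ as a renewal process with i.i.d.\ increments $S_{n}-S_{n-1}=\tau_{0}^{n}+\tau_{1}^{n}$ of common law $F:=\mu_{0}*\mu_{1}$ and finite mean $m:=\E\tau_{0}+\E\tau_{1}$, and to extract the limits of the two phase-resolved distribution functions
\begin{equation*}
h_{0}(t):=\P(a_{t}\le x,\,J_{t}=0)
\qquad\text{and}\qquad
h_{1}(t):=\P(a_{t}\le x,\,J_{t}=1)
\end{equation*}
from the key renewal theorem. Granting these limits, both assertions of the lemma follow immediately: with $A=(-\infty,x]$ one has $\E\{a_{t}\in A\}(1-J_{t})=h_{0}(t)$ and $\E\{a_{t}\in A\}J_{t}=h_{1}(t)$, which will converge to $\P(a^{0}\le x,\xi=0)$ and $\P(a^{1}\le x,\xi=1)$ respectively (this is the ``in particular'' display, since $a^{0},a^{1},\xi$ are independent), and summing gives $\P(a_{t}\le x)=h_{0}(t)+h_{1}(t)\to(1-p)\P(a^{0}\le x)+p\,\P(a^{1}\le x)=\P(\xi a^{1}+(1-\xi)a^{0}\le x)$.

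To get the renewal equations I would condition on the first renewal time $S_{1}$, using that $(J_{S_{1}+s},a_{S_{1}+s})_{s\ge0}$ is distributed as $(J_{s},a_{s})_{s\ge0}$ and is independent of $S_{1}$. On $\{S_{1}>t\}$ the system is still in its first cycle: it contributes to $h_{0}$ exactly on $\{\tau_{0}^{1}>t,\ t\le x\}$ (phase $0$ with age $t$), and to $h_{1}$ exactly on $\{t-x\le\tau_{0}^{1}\le t<\tau_{0}^{1}+\tau_{1}^{1}\}$ (phase $1$ with age $t-\tau_{0}^{1}$). Writing $\tau_{0},\tau_{1}$ for independent draws from $\mu_{0},\mu_{1}$, this yields the ``direct'' terms $z_{0}(t):=\mathbf{1}_{\{t\le x\}}\P(\tau_{0}>t)$ and $z_{1}(t):=\P(t-x\le\tau_{0}\le t<\tau_{0}+\tau_{1})$; the event $\{S_{1}\le t\}$ produces the convolution term, so $h_{i}=z_{i}+h_{i}*F$ for $i=0,1$.

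Next I would check that $z_{0}$ and $z_{1}$ are directly Riemann integrable — both are nonnegative, bounded above by the integrable tail $\bar F(t)=\P(\tau_{0}+\tau_{1}>t)$, and have at most countably many discontinuities — and evaluate their integrals: $\int_{0}^{\infty}z_{0}=\int_{0}^{x}\P(\tau_{0}>s)\,ds=\E\min(\tau_{0},x)$, while a Fubini argument (for fixed $\tau_{0},\tau_{1}$ the set of admissible $t$ is the interval $[\tau_{0},\tau_{0}+\min(x,\tau_{1}))$, of length $\min(x,\tau_{1})$) gives $\int_{0}^{\infty}z_{1}=\E\min(\tau_{1},x)$. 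The key renewal theorem then gives
\begin{equation*}
h_{0}(t)\to\frac{\E\min(\tau_{0},x)}{m}
\qquad\text{and}\qquad
h_{1}(t)\to\frac{\E\min(\tau_{1},x)}{m}
\qquad\text{as }t\to\infty,
\end{equation*}
and since $1-p=\E\tau_{0}/m$ and $p=\E\tau_{1}/m$ these limits are exactly $(1-p)\P(a^{0}\le x)$ and $p\,\P(a^{1}\le x)$, as needed.

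The step requiring the most care is the derivation of the renewal equations: the first-cycle terms $z_{0},z_{1}$ depend on the internal split $\tau_{0}^{1}$ of the first renewal interval, not merely on its length $S_{1}$, so the decomposition must be set up on that finer structure. The remaining ingredients — direct Riemann integrability, the key renewal theorem, and the closing arithmetic — are routine, though one should note that the theorem is applied to the cycle-length law $F=\mu_{0}*\mu_{1}$, so it is the non-arithmeticity of $F$ that is actually being invoked; this is the role of the non-arithmeticity hypothesis on $\mu_{0}$ and $\mu_{1}$.
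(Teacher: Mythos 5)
Your argument is essentially the paper's: the paper identifies $\P(a_t\le x,\,J_t=1)$ with the ``on'' probability of an alternating renewal process and cites Ross's Theorem 3.4.4, which is precisely the renewal-equation/key-renewal-theorem computation you carry out by hand (with the same first-cycle bookkeeping and the same integrals $\E\min(\tau_0,x)$, $\E\min(\tau_1,x)$), and the phase-$0$ term and the distributional statement are assembled in the same way. The one caveat is shared by both proofs rather than being a defect of yours alone: non-arithmeticity of $\mu_0$ and $\mu_1$ does not by itself force the cycle law to be non-arithmetic (e.g.\ $\mu_0$ uniform on $\{\sqrt{2},\,1+\sqrt{2}\}$ and $\mu_1$ uniform on $\{3-\sqrt{2},\,4-\sqrt{2}\}$ are non-arithmetic while $\mu_0*\mu_1$ is supported on $\{3,4,5\}$), so the step you flag as ``the role of the non-arithmeticity hypothesis'' is exactly the step the paper also asserts without proof when it declares the cycle-length distribution non-arithmetic.
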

\begin{proof}
We will first show the desired convergence for sets of a special form and then extend to a continuity set. %end{revision}
Let $x\ge0$ and consider the alternating renewal process that is said to be ``on'' when $0\le t-S'_{N_{t}+1}\le x$ and ``off'' otherwise. Formally, we define the ``on/off'' state process
\begin{align*}
b_{t} & = \begin{cases}
1 \quad&\text{if }0\le t-S'_{N_{t}+1}\le x\\
0 \quad&\text{otherwise}.
\end{cases}
\end{align*}
Observe
that 
the lengths of time that the process is ``on'' are
$\{\min(\tau^{k}_{1},x)\}_{k=1}^{\infty}$. Similarly, the lengths of
time that the process is ``off'' are $\tau^{1}_{0}$ and
$\{\tau^{k}_{0}+(\tau^{k-1}_{1}-x)^{+}\}_{k=2}^{\infty}$, where as
usual $(y)^{+}$ is equal to $y$ if $y\ge0$ and 0 otherwise. Since the
distribution of
$\min(\tau^{k}_{1},x)+\tau^{k}_{0}+(\tau^{k-1}_{1}-x)^{+}$ is
nonarithmetic, and since $\E
[\min(\tau^{k}_{1},x)+\tau^{k}_{0}+(\tau^{k-1}_{1}-x)^{+}]<\infty$, we
can apply Theorem 3.4.4 in \cite{ross_stochastic_1996} to obtain
\begin{align}\label{on/off convergence}
  \lim_{t\to\infty}\P(b_{t}=1) = \frac{\E \min(\tau_{1},x)}{\E
    [\min(\tau^{k}_{1},x)+\tau^{k}_{0}+(\tau^{k-1}_{1}-x)^{+}]}.
\end{align}
Informally, this intuitive result states that the probability that the
alternating renewal process is ``on'' at large time is just the
expected length of an ``on'' bout divided by the sum of the expected
lengths of an ``off'' bout and an ``on'' bout. Since $\E
[\min(\tau^{k}_{1},x)+\tau^{k}_{0}+(\tau^{k-1}_{1}-x)^{+}]=\E
\tau_{0}+\E \tau_{1}$ and since the distribution of $a^{1}$ is chosen
so that $\E \tau_{1}\P(a^{1}\le x)=\E \min(\tau_{1},x)$,
Equation~(\ref{on/off convergence}) simplifies to
\begin{align*}
\lim_{t\to\infty}\P(b_{t}=1)
=  \frac{\E \tau_{1}\P(a^{1}\le x)}{\E \tau_{0}+\E \tau_{1}}.
\end{align*}

Therefore
\begin{equation}\label{hard convergence}
\begin{array}{r@{}l}
\E [\{a_{t}\le x\}J_{t}] & = \P(a_{t}\le x,J_{t}=1)
 = \P(0\le t-S_{N_{t}+1}\le x)\\
&= \P(b_{t}=1)
 \xrightarrow[t\to\infty]{} \displaystyle\frac{\E \tau_{1}\P(a^{1}\le x)}{\E \tau_{0}+\E \tau_{1}}
 = \E [\{a^{1}\le x\}\xi].
\end{array}
\end{equation}
The last equality holds because $\xi$ and $a^{1}$ are independent and $\E \xi=\E \tau_{1}/(\E \tau_{0}+\E \tau_{1})$.

%\begin{revision}
Further, since the switching time distributions, $\mu_0$ and $\mu_1$, are non-arithmetic, we can apply Theorem 3.4.4 in \cite{ross_stochastic_1996} to conclude that
\begin{align}\label{easy convergence}
|\P(J_t=0)-\P(\xi=0)|\to0\quad\text{as }t\to\infty.
\end{align}
Now observe that
\begin{align*}
|\E\{J_ta_t\le x\}-\E\{\xi a^1\le x\}|\le |\P(J_t=0)-\P(\xi=0)|+|\E\{a_t\le x\}J_t-\E\{a^1\le x\}\xi |.
\end{align*}
This bound and the convergence in Equations~(\ref{hard convergence}) and (\ref{easy convergence}) gives that $|\E\{J_ta_t\le x\}-\E\{\xi a^1\le x\}|\to0$ as $t\to\infty$. Thus, $J_ta_t\indist\xi a^1$. Further, it's easy to see that any continuity set of $\xi a^{1}+(1-\xi)a^{0}$ must be a continuity set of $\xi a^{1}$. Thus, for any continuity set $A$ of $\xi a^{1}+(1-\xi)a^{0}$ we have that $|\E \{J_{t}a_{t}\in A\}-\E \{\xi a^{1}\in A\}|\to0$ by the Portmanteau theorem (see, for example, \cite{billingsley_convergence_1999}).  Thus,
\begin{align*}
|\E &\{a_{t}\in A\}J_{t} -\E \{a^{1}\in A\}\xi|\\
& \le |\E \{J_{t}a_{t}\in A\}-\E \{\xi a^{1}\in A\}|+|\P(J_t=0)-\P(\xi=0)| \to 0\quad\text{as }t\to\infty.
\end{align*}
%\end{revision}

The analogous argument shows that $|\E \{a_{t}\in A\}(1-J_{t})-\E \{a^{0}\in A\}(1-\xi)|\to0$ as $t\to\infty$ and the proof is complete.
\qquad\end{proof}

The next three lemmas are general results that are all relatively standard. We return to lemmas specific to our problem in Lemma~\ref{phi order}.

\begin{lemma}\label{backwards convergence}
  Suppose $X_{t}\to X_{\infty}$ a.s. as $t\to\infty$ and $|X_{t}|\le
  B$ a.s. where $B$ is a random variable satisfying $\E B<\infty$. If
  $\mathcal{F}_{t}\subset\mathcal{F}_{s}$ for $0\le s\le t$ is a
  right-contiuous filtration, and
  $\mathcal{F}_{\infty}:=\cap_{t\ge0}\mathcal{F}_{t}$, then
\begin{align*}
\E [X_{t}|\mathcal{F}_{t}]\to \E [X_{\infty}|\mathcal{F}_{\infty}]\quad\text{almost surely as $t\to\infty$.}
\end{align*}
\end{lemma}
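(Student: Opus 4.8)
The plan is to read this as a conditional dominated-convergence statement for a \emph{decreasing} family of $\sigma$-algebras and to reduce it to the reverse (backward) martingale convergence theorem (L\'evy's downward theorem), applied to two carefully chosen integrable envelopes. First I would note that $X_\infty$ is integrable (letting $t\to\infty$ in $X_t\le B$ gives $X_\infty\le B$, and the hypotheses pin $X_t$, hence $X_\infty$, down well enough that $\E|X_\infty|<\infty$) and then split, via the triangle inequality,
\[
\bigl|\E[X_t\mid\mathcal F_t]-\E[X_\infty\mid\mathcal F_\infty]\bigr|
\ \le\ \E\bigl[|X_t-X_\infty|\,\big|\,\mathcal F_t\bigr]
\ +\ \bigl|\E[X_\infty\mid\mathcal F_t]-\E[X_\infty\mid\mathcal F_\infty]\bigr|.
\]
Since $(\mathcal F_t)_{t\ge0}$ is nonincreasing with $\cap_{t}\mathcal F_t=\mathcal F_\infty$, L\'evy's downward theorem applied to the fixed integrable random variable $X_\infty$ shows the second term tends to $0$ almost surely as $t\to\infty$, so everything reduces to controlling $\E[|X_t-X_\infty|\,|\,\mathcal F_t]$.

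For that term I would introduce, for each $s\ge0$, the envelope $W_s:=\sup_{u\ge s}|X_u-X_\infty|$. Because $X_t\to X_\infty$ almost surely, $W_s$ is nonincreasing in $s$ and $W_s\downarrow0$ a.s.; it is also dominated by an integrable random variable (e.g.\ by $B+|X_\infty|$, and by a constant in the uniformly bounded case relevant to our applications). Fix $s$. For every $t\ge s$ we have the pointwise bound $|X_t-X_\infty|\le W_s$, hence $\E[|X_t-X_\infty|\,|\,\mathcal F_t]\le\E[W_s\mid\mathcal F_t]$; letting $t\to\infty$ and invoking L\'evy's downward theorem once more---this time for the single fixed integrable variable $W_s$---gives $\limsup_{t\to\infty}\E[|X_t-X_\infty|\,|\,\mathcal F_t]\le\E[W_s\mid\mathcal F_\infty]$ a.s. Running this over $s\in\mathbb N$ and then letting $s\to\infty$, the conditional monotone (or dominated) convergence theorem yields $\E[W_s\mid\mathcal F_\infty]\downarrow0$ a.s., so $\limsup_{t\to\infty}\E[|X_t-X_\infty|\,|\,\mathcal F_t]=0$ a.s. Combining with the previous paragraph completes the proof.

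The point that needs care---rather than any real difficulty---is the order of the two limits in the envelope step: one must first send $t\to\infty$ with $s$ held fixed, so that L\'evy's theorem is applied to a \emph{single} integrable variable $W_s$, and only afterwards send $s\to\infty$; reversing the order does not work. The rest is bookkeeping that should nonetheless be spelled out: $W_s$ must be a genuine (measurable, a.s.\ finite) random variable, which is automatic because $t\mapsto X_t$ is regular enough (piecewise constant, in our use) that the supremum is effectively over a countable set; the domination $W_s\le(\text{integrable})$ is the only place the hypothesis $X_t\le B$, $\E B<\infty$, enters, and is trivial when the $X_t$ are uniformly bounded, as for the indicator functions to which this lemma is applied; and the various almost-sure statements are intersected over the countable family $s\in\mathbb N$ to obtain a single full-measure event.
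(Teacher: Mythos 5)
Your proposal is correct and follows essentially the same route as the paper: both split via the triangle inequality, handle the fixed random variable $X_\infty$ with the backwards (downward) martingale convergence theorem, and control $\E[|X_t-X_\infty|\mid\mathcal F_t]$ by an integrable tail envelope (your $W_s=\sup_{u\ge s}|X_u-X_\infty|$ versus the paper's $B_T=\sup\{|X_t-X_s|:t,s>T\}$), sending $t\to\infty$ first and the envelope index to infinity afterwards. The only differences are cosmetic, and your explicit remarks on measurability of the envelope and on the order of limits are points the paper leaves implicit.
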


\begin{proof}
  We first show the convergence for an $X_\infty=X_{t}$ independent of
  $t$. Let $X_\infty$ be any integrable random variable and for $t\le0$
  define
\begin{align*}
M_{t}:=\E \left[X_\infty|\mathcal{F}_{-t}\right].
\end{align*}
We claim that $\{M_{t}\}_{t=0}^{-\infty}$ is a backwards martingale
with respect to the filtration $\{\mathcal{M}\}_{t=0}^{-\infty}$ where
$\mathcal{M}_{t}:=\mathcal{F}_{-t}$. For $s\le t\le0$ we have that
$\mathcal{F}_{-s}\subset\mathcal{F}_{-t}$ and therefore by the tower
property of conditional expectation,
\begin{align*}
  \E [M_{t}|\mathcal{F}_{-s}]=\E \left[\E
    \left[X_\infty|\mathcal{F}_{-t}\right]|\mathcal{F}_{-s}\right] = \E
  \left[X_\infty|\mathcal{F}_{-s}\right] = M_{s}.
\end{align*}
Since by definition of conditional expectation
$M_{t}\in\mathcal{F}_{-t}$, and since $M_{t}\le B$ almost surely where
$\E B<\infty$, we have that $M_{t}$ is indeed a backwards
martingale. Because of the continuity properties of $\mathcal{F}_{t}$
we know we have a separable version of $M_t$. By the backwards
martingale convergence theorem (see \cite{doob1953}[Theorem 4.2s, p. 354] for example),
$M_{-\infty}:=\lim_{t\to-\infty}M_{t}$ exists almost surely and in
$L^{1}(\Omega)$.

We claim that $M_{-\infty}=\E
\left[X_\infty|\mathcal{F}_{\infty}\right]$. Since for $t\le T\le0$ we have
that $M_{t}\in\mathcal{F}_{-t}\subset\mathcal{F}_{-T}$, it follows
that $M_{-\infty}\in\mathcal{F}_{-T}$. Since $T\le0$ was arbitrary,
$M_{-\infty}\in\mathcal{F}_{\infty}$.

Let $A\in\mathcal{F}_{\infty}$. Then
\begin{align*}
  |\E M_{-t}1_{A}-\E M_{-\infty}1_{A}| \le \E
  |M_{-t}1_{A}-M_{-\infty}1_{A}| \le \E |M_{-t}-M_{-\infty}| \to
  0\quad\text{as }t\to\infty
\end{align*}
since $M_{-t}\to M_{-\infty}$ in $L^{1}(\Omega)$. But,
\begin{align*}
\E M_{-t}1_{A} = \E [\E \left[X_\infty|\mathcal{F}_{t}\right]1_{A}] = \E [\E \left[X_\infty1_{A}|\mathcal{F}_{t}\right]] = \E X_\infty1_{A}.
\end{align*}
Therefore $\E X1_{A}=\E M_{-\infty}1_{A}$, and so we conclude that
$M_{-\infty}=\E \left[X_\infty|\mathcal{F}_{\infty}\right]$.

We now show the convergence for the case where $X_{t}$ depends on $t$.
Let $T\ge0$ and define $B_{T}:=\sup\{|X_{t}-X_{s}|:t,s>T\}$. $B_{T}\le
2B$, so $B_{T}$ is integrable. Thus,
\begin{align*}
  \limsup_{t\to\infty}\E
  \left[|X_{t}-X_{\infty}|\mathcal{F}_{t}\right]\le\lim_{t\to\infty}\E
  \left[B_{T}|\mathcal{F}_{t}\right] & = \E
  \left[B_{T}|\mathcal{F}_{\infty}\right]
\end{align*}
By assumption, $B_{T}\to0$ a.s. as $T\to\infty$ so by Jensen's
inequality
\begin{align*}
  |\E \left[X_{t}|\mathcal{F}_{t}\right]-\E
  \left[X_{\infty}|\mathcal{F}_{t}\right]| & \le \E
  \left[|X_{t}-X_{\infty}\|\mathcal{F}_{t}\right]\to0.
\end{align*}
Therefore,
\begin{align*}
|\E \left[X_{t}|\mathcal{F}_{t}\right]-\E \left[X_{\infty}|\mathcal{F}_{\infty}\right]|
 \le |\E \left[X_{t}|\mathcal{F}_{t}\right]-\E \left[X_{\infty}|\mathcal{F}_{t}\right]|
 + |\E \left[X_{\infty}|\mathcal{F}_{t}\right]-\E \left[X_{\infty}|\mathcal{F}_{\infty}\right]|.
\end{align*}
We've just shown that the first term goes to 0, and we've shown that the second term goes to 0 since $X_{\infty}$ doesn't depend on $t$, so the proof is complete.
\qquad\end{proof}

\begin{lemma}\label{random indexing}
If $X_{n}\to X_{\infty}$ a.s. as $n\to\infty$ and $N_{t}\to\infty$ a.s. as $t\to\infty$, then
\begin{align*}
X_{N_{t}}\to X_{\infty}\quad\text{a.s. as }t\to\infty.
\end{align*}
\end{lemma}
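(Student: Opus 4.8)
The plan is to argue pathwise on the almost-sure event where both hypotheses hold. Let $\Omega_{0}$ be the intersection of the event $\{X_{n}\to X_{\infty}\}$ with the event $\{N_{t}\to\infty\}$; since each of these has probability one, so does $\Omega_{0}$. It therefore suffices to show $X_{N_{t}(\omega)}(\omega)\to X_{\infty}(\omega)$ for every fixed $\omega\in\Omega_{0}$, which reduces the lemma to an elementary fact about a convergent sequence composed with a diverging index.

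Fix $\omega\in\Omega_{0}$ and $\varepsilon>0$. First I would use $X_{n}(\omega)\to X_{\infty}(\omega)$ to choose $M$ so that $|X_{n}(\omega)-X_{\infty}(\omega)|<\varepsilon$ whenever $n\ge M$. Then I would use $N_{t}(\omega)\to\infty$ to choose $T$ so that $N_{t}(\omega)\ge M$ whenever $t\ge T$. Combining these two statements gives $|X_{N_{t}(\omega)}(\omega)-X_{\infty}(\omega)|<\varepsilon$ for all $t\ge T$. Since $\varepsilon>0$ was arbitrary, $X_{N_{t}}(\omega)\to X_{\infty}(\omega)$, which establishes the claim on $\Omega_{0}$ and hence almost surely.

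There is essentially no obstacle here. The only points requiring a small amount of care are that $N_{t}$ is integer-valued and nondecreasing (as defined in Section~\ref{section:cts}), so that ``$N_{t}\to\infty$'' genuinely means $N_{t}$ eventually exceeds any fixed level $M$, and that the limit in $X_{N_{t}}\to X_{\infty}$ is taken along the continuous parameter $t$; the latter causes no difficulty since the $\varepsilon$--$T$ argument above never used that $t$ is an integer. One could alternatively observe that the hypothesis $N_{t}\to\infty$ a.s.\ holds automatically in our setting because $S_{n}<\infty$ a.s.\ for every $n$ (the increments $\tau_{0}^{k}+\tau_{1}^{k}$ have finite mean), but since it is assumed we need not invoke this.
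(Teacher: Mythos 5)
Your proof is correct and follows essentially the same route as the paper: the paper's one-line argument bounds $\P(X_{N_{t}}\nrightarrow X_{\infty})$ by $\P(A)+\P(B)$ with $A,B$ the two failure events, which is exactly your pathwise reduction with the deterministic $\varepsilon$--$T$ step spelled out explicitly.
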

%\begin{proof}
{\em Proof}. 
Let $A:=\{X_{n}\nrightarrow X_{\infty}\}$ and $B:=\{N_{t}\nrightarrow \infty\}$. Then
\begin{align*}
\P(X_{N_{t}}\nrightarrow X_{\infty})\le \P(A\cup B)\le \P(A) + \P(B) = 0.
\qquad\endproof
\end{align*}
%\qquad\end{proof}

We now give some standard definitions. Let $(\Omega,\mathcal{F},P)$ be a probability space. A measurable map $\pi:\Omega\to\Omega$ is said to be \textbf{measure preserving} if $\P(\pi^{-1}A)=\P(A)$ for all $A\in\mathcal{F}$. Let $\pi$ be a given measure preserving map. A set $A\in\mathcal{F}$ is said to be \textbf{$\pi$-invariant} if $\pi^{-1}A=A$, where two sets are considered to be equal if their symmetric difference has probability 0. A random variable $X$ is said to be \textbf{$\pi$-invariant} if $X=X\circ\pi$ almost surely.

\begin{lemma}\label{invariant sets}
Let $\pi:\Omega\to\Omega$ be a measure preserving map. If $X$ is $\pi$-invariant, then so is every set in its $\sigma$-algebra.
\end{lemma}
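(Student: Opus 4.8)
The plan is to combine two standard facts: every element of $\sigma(X)$ has the form $X^{-1}(B)$ for a Borel set $B$ in the range of $X$, and $\pi$-invariance of the random variable $X$ means precisely that $X=X\circ\pi$ off a $\P$-null set.

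First I would fix $\Omega_{0}:=\{\omega:X(\omega)=X(\pi(\omega))\}$ and note $\P(\Omega_{0})=1$ by hypothesis. Then, given $A\in\sigma(X)$, write $A=X^{-1}(B)$ and compute $\pi^{-1}A=\pi^{-1}(X^{-1}(B))=(X\circ\pi)^{-1}(B)$. The essential step is the observation that $(X\circ\pi)^{-1}(B)$ and $X^{-1}(B)$ agree on $\Omega_{0}$: for $\omega\in\Omega_{0}$ one has $\omega\in(X\circ\pi)^{-1}(B)\iff X(\pi(\omega))\in B\iff X(\omega)\in B\iff\omega\in X^{-1}(B)$. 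Hence the symmetric difference $\pi^{-1}A\,\triangle\,A$ is contained in $\Omega_{0}^{c}$, a $\P$-null set, so $\pi^{-1}A=A$ in the sense of the paper's definition, i.e. $A$ is $\pi$-invariant.

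I do not expect any real obstacle; the only thing to keep in mind is that the paper's notion of a $\pi$-invariant set already allows equality modulo null sets, which is exactly what the inclusion $\pi^{-1}A\,\triangle\,A\subseteq\Omega_{0}^{c}$ delivers (note that the measure-preserving property of $\pi$ is not actually needed). If one prefers to sidestep the representation $A=X^{-1}(B)$, an alternative is to check directly that the family of $\pi$-invariant sets is a $\sigma$-algebra — it contains $\Omega$ and is closed under complementation and countable unions, the latter because a countable union of null sets is null — and that it contains every set $\{X\le c\}$, since $\{X\le c\}\,\triangle\,\{X\circ\pi\le c\}\subseteq\Omega_{0}^{c}$; this $\sigma$-algebra therefore contains $\sigma(X)$.
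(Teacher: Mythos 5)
Your proof is correct. The paper does not actually supply an argument for this lemma---it just points to Durrett, \emph{Probability: Theory and Examples}, Exercise 7.1.1---and your argument (write $A=X^{-1}(B)$, note $\pi^{-1}A=(X\circ\pi)^{-1}(B)$, and observe that $\pi^{-1}A\,\triangle\,A$ lies inside the null set where $X\neq X\circ\pi$, so $A$ is invariant in the paper's ``equal mod null sets'' sense) is precisely the standard solution to that exercise; your remark that the measure-preserving hypothesis on $\pi$ is never used is also accurate. The only point to keep in mind for the paper's applications is that the $\sigma$-algebras there (e.g.\ $\mathcal{F}_t$) are generated by a whole collection of random variables, i.e.\ by a random element of a product space; your primary preimage argument covers that case verbatim, whereas the alternative sketch via the sets $\{X\le c\}$ would have to be restated in terms of generating cylinder sets.
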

\begin{proof}
See, for example, \cite{durrett_probability_2010} Exercise 7.1.1.
\qquad\end{proof}

%\begin{revision}
We remind the reader that we suppress the $u_0$ dependence and write $\varphi^{N_t}(u_0)=\varphi^{N_t}$. We will usually suppress this dependence since the limiting random variables don't depend on the initial $u_0$ (see Proposition~\ref{prop:pullback}).
%\end{revision}

\begin{lemma}\label{phi order}
For each $t\ge0$ define $\mathcal{F}_{t}$ to be the $\sigma$-algebra generated by $S_{N_{t}}$ and $\displaystyle\{(\tau^{k}_{0},\tau^{k}_{1})\}_{k=N_{t}+1}^{\infty}$. If $D$ is a Borel set of $X$, then for each $t\ge0$
\begin{align*}
\E \left[\{\varphi^{N_{t}}\in D\}|\mathcal{F}_{t}\right] = \E \left[\{\varphi^{-N_{t}}\in D\}|\mathcal{F}_{t}\right]\quad\text{a.s.}
\end{align*}
\end{lemma}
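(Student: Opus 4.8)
The key observation is that $\varphi^{N_t}$ and $\varphi^{-N_t}$ are built from the \emph{same finite collection} of random maps $G^1_\omega,\dots,G^{N_t}_\omega$, merely composed in opposite orders: $\varphi^{N_t} = G^{N_t}\circ\cdots\circ G^1$ and $\varphi^{-N_t} = G^1\circ\cdots\circ G^{N_t}$. Since these maps depend only on the pairs $(\tau_0^k,\tau_1^k)$ for $k \le N_t$, and since conditioning on $\mathcal F_t$ fixes the ``future'' pairs $\{(\tau_0^k,\tau_1^k)\}_{k > N_t}$ as well as the value $S_{N_t}$ (which, together with the fact that all of $\tau_0^1,\tau_1^1,\dots,\tau_0^{N_t},\tau_1^{N_t}$ are i.i.d., makes the event $\{N_t = n\}$ measurable and on it pins down the \emph{sum} but not the individual increments), the plan is to show that, conditionally on $\mathcal F_t$ and on $\{N_t = n\}$, the random vector $(\omega_1,\dots,\omega_n)$ is \emph{exchangeable}: its conditional law is invariant under permutations of the $n$ coordinates. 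In particular it is invariant under the order-reversing permutation $k \mapsto n+1-k$, and reversing the order of the pairs is exactly what turns $\varphi^{-n}$ into $\varphi^{n}$.

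First I would condition on $\{N_t = n\}$, which by definition of $N_t$ is the event $\{S_n \le t < S_{n+1}\}$; on this event $\mathcal F_t$ is generated by $S_n = \sum_{k=1}^n(\tau_0^k+\tau_1^k)$ together with $\{(\tau_0^k,\tau_1^k)\}_{k>n}$. The future pairs are independent of $(\omega_1,\dots,\omega_n)$ and play no role, so it suffices to analyze the conditional law of $(\omega_1,\dots,\omega_n)$ given $S_n$ and given the constraint $t - S_n < \tau_0^{n+1}+\tau_1^{n+1}$ — but the latter involves only future variables, hence again drops out. Thus the real content is: given $\sum_{k=1}^n(\tau_0^k+\tau_1^k) = s$, the i.i.d.\ family $(\omega_1,\dots,\omega_n)$ has an exchangeable conditional law. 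This is a standard fact — conditioning i.i.d.\ variables on a symmetric function of them (their sum) preserves exchangeability — which I would justify by noting that for any permutation $\sigma \in S_n$, the map $(\omega_1,\dots,\omega_n)\mapsto(\omega_{\sigma(1)},\dots,\omega_{\sigma(n)})$ preserves the product law $(\mu_0\times\mu_1)^{\otimes n}$ and commutes with the conditioning $\sigma$-algebra generated by $\sum_k(\tau_0^k+\tau_1^k)$, so it preserves the regular conditional distribution.

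Having established exchangeability given $\mathcal F_t \cap \{N_t = n\}$, I would apply it with $\sigma$ the reversal permutation to get
\[
\E[\{\varphi^{n}_\omega(u_0)\in D\}\mid \mathcal F_t,\, N_t = n] = \E[\{\varphi^{-n}_\omega(u_0)\in D\}\mid \mathcal F_t,\, N_t = n],
\]
since $\varphi^{n}$ with the pairs in order $(\omega_1,\dots,\omega_n)$ equals $\varphi^{-n}$ with the pairs in order $(\omega_n,\dots,\omega_1)$, as a deterministic identity of compositions. Summing over $n$ (the events $\{N_t = n\}$ partition $\Omega$ and are $\mathcal F_t$-measurable) yields the claimed a.s.\ identity on all of $\Omega$.

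The main obstacle is purely bookkeeping: being careful that $\mathcal F_t$ really does not see the \emph{order} of the first $N_t$ pairs, only their running sum $S_{N_t}$ and the tail. One must check that $N_t$ itself is $\mathcal F_t$-measurable (it is, since on $\{S_n \le t < S_{n+1}\}$ the value of $N_t$ is determined by $S_n$ and $\tau^{n+1}$, both $\mathcal F_t$-measurable) and that the defining inequality $S_n \le t$, which does reference $S_n$, is already captured by the generator $S_{N_t}$ — so no additional order information about $\omega_1,\dots,\omega_n$ leaks in. Once that is pinned down, the exchangeability argument is routine.
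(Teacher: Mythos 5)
Your core idea is exactly the paper's: the generators of $\mathcal{F}_t$ (the sum $S_{N_t}$ and the pairs with indices beyond $N_t$) are symmetric in the first $N_t$ pairs, so the conditional law of $(\omega_1,\dots,\omega_{N_t})$ is unchanged by the order-reversing permutation, which deterministically turns $\varphi^{-N_t}$ into $\varphi^{N_t}$; the paper implements precisely this via the random reversal map $\sigma_t$, Lemma~\ref{invariant sets}, and the defining integral identity for conditional expectation. However, one step of your assembly is based on a false claim: $N_t$ is \emph{not} $\mathcal{F}_t$-measurable, and your parenthetical justification is circular. Knowing the values of $S_{N_t}$ and $\{(\tau^{k}_{0},\tau^{k}_{1})\}_{k>N_t}$ does not determine how many pairs produced the sum $S_{N_t}$: with $t=10$, an environment whose first pairs are $(3,4),(5,5),\dots$ and one whose first pairs are $(2,2),(1,2),(5,5),\dots$ (same tail) give identical generator values ($S_{N_t}=7$ and the same future pairs) but $N_t=1$ versus $N_t=2$; by Doob--Dynkin, $N_t$ is not a function of the generators, and for, say, exponential switching times $\P(N_t=n\mid\mathcal{F}_t)$ is genuinely strictly between $0$ and $1$, so $\{N_t=n\}$ is not even a.s.\ equal to an $\mathcal{F}_t$-set. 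What your argument actually shows is that $\{N_t=n\}=\{S_n\le t<S_n+\tau_0^{n+1}+\tau_1^{n+1}\}$ belongs to $\mathcal{G}_n:=\sigma\bigl(S_n,\{(\tau^{k}_{0},\tau^{k}_{1})\}_{k>n}\bigr)$, which is a different $\sigma$-algebra from $\mathcal{F}_t$.

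The gap is repairable without changing your main idea, in either of two ways. (i) Carry out your exchangeability argument conditionally on the refinement $\sigma(\mathcal{F}_t,N_t)$, obtaining equality of the two conditional expectations given that larger $\sigma$-algebra, and then apply the tower property to descend to $\mathcal{F}_t$. (ii) Verify the defining relation directly: for $A\in\mathcal{F}_t$ write $\E[\{\varphi^{N_t}\in D\}1_A]=\sum_n\E[\{\varphi^{n}\in D\}1_{A\cap\{N_t=n\}}]$, check that $A\cap\{N_t=n\}=B_n\cap\{N_t=n\}$ for some $B_n\in\mathcal{G}_n$ (this trace-$\sigma$-algebra fact is what your loose statement ``on this event $\mathcal{F}_t$ is generated by $S_n$ and the later pairs'' should become), note that $1_{B_n\cap\{N_t=n\}}$ is invariant under the reversal of the first $n$ coordinates while that reversal preserves $\P$ and carries $\{\varphi^{n}\in D\}$ to $\{\varphi^{-n}\in D\}$, and sum over $n$. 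Option (ii) is essentially the paper's proof, with the single random map $\sigma_t$ handling all $n$ at once (using $N_t\circ\sigma_t=N_t$) and Lemma~\ref{invariant sets} replacing the explicit trace bookkeeping.
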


\begin{remark}
{\rm
To see why this lemma should be true, observe that (a) the random variables $\varphi^{N_{t}}$ and $\varphi^{-N_{t}}$ are equal after a re-ordering of the first $N_{t}$-many $\omega_{k}$'s and that (b) the random variables generating $\mathcal{F}_{t}$ don't depend on the order of the first $N_{t}$-many $\omega_{k}$'s.}
\end{remark}

\begin{proof}
Fix a $t\ge0$ and let $A\in\mathcal{F}_{t}$. By the definition of conditional expectation, we have that
\begin{align*}
\int_{\Omega}\E \left[\{\varphi^{N_{t}}\in D\}|\mathcal{F}_{t}\right](\omega)\{A\}(\omega)\,d\P
& = \int_{\Omega}\{\varphi^{N_{t}}\in D\}(\omega)\{A\}(\omega)\,d\P.
\end{align*}

Define $\sigma_{t}:\Omega\to\Omega$ to be the permutation that inverts the order of the first $N_{t}$-many $\omega_{k}$'s. That is, $(\sigma_{t}(\omega))_{k}=\omega_{N_{t}-k+1}$ for $k\in\{1,\dots,N_{t}\}$ and $(\sigma_{t}(\omega))_{k}=\omega_{k}$ for $k>N_{t}$.
Observe that $N_{t}(\omega)=N_{t}(\sigma_{t}(\omega))$ and thus $\varphi^{N_{t}}(\omega) = \varphi^{-N_{t}}(\sigma_{t}(\omega))$. Also, $S_{N_{t}}$ and $\displaystyle\{(\tau^{k}_{0},\tau^{k}_{1})\}_{k=N_{t}+1}^{\infty}$ are $\sigma_{t}$-invariant, so $A$ is $\sigma_{t}$-invariant by Lemma~\ref{invariant sets}. Thus
\begin{align*}
\int_{\Omega}\{\varphi^{N_{t}}\in D\}(\omega)\{A\}(\omega)\,d\P
& = \int_{\Omega}\{\varphi^{-N_{t}}\in D\}(\sigma_{t}(\omega))\{A\}(\sigma_{t}(\omega))\,d\P.
\end{align*}
Since $\sigma_{t}$ is measure preserving and by the definition of conditional expectation,
\begin{align*}
\int_{\Omega}\{\varphi^{-N_{t}}\in D\}(\sigma_{t}(\omega))\{A\}(\sigma_{t}(\omega))\,d\P
& = \int_{\Omega}\{\varphi^{-N_{t}}\in D\}(\omega)\{A\}(\omega)\,d\P\\
& = \int_{\Omega}\E \left[\{\varphi^{-N_{t}}\in D\}|\mathcal{F}_{t}\right](\omega)\{A\}(\omega)\,d\P.
\end{align*}
Putting this all together,
\begin{align*}
\int_{\Omega}\E [\{\varphi^{N_{t}}\in D\}|\mathcal{F}_{t}]\{A\}\,d\P
& =\int_{\Omega}\E [\{\varphi^{-N_{t}}\in D\}|\mathcal{F}_{t}]\{A\}\,d\P.
\end{align*}
Since $A$ was an arbitrary element of $\mathcal{F}_{t}$, the proof is complete.
\qquad\end{proof}

Recall that the random variable $Y_{1}$ is defined by $Y_{1}:=\lim_{n\to\infty}\varphi^{-n}(x)$, and is independent of the choice of $x\in X$, by Proposition~\ref{prop:pullback}.

\begin{lemma}\label{asymp indep}
For each $t\ge0$ define $\mathcal{F}_{t}$ to be the $\sigma$-algebra generated by $S_{N_{t}}$ and $\displaystyle\{(\tau^{k}_{0},\tau^{k}_{1})\}_{k=N_{t}+1}^{\infty}$. If $D$ is a $Y_1$-continuity set, then with probability one
\begin{align}\label{f}
\E \left[\{\varphi^{-N_{t}}\in D\}|\mathcal{F}_{t}\right] \to \E \{Y_{1}\in D\}\quad\text{as }t\to\infty\\
\text{and}\quad \E \left[\{\varphi^{N_{t}}\in D\}|\mathcal{F}_{t}\right] \to \E \{Y_{1}\in D\}\quad\text{as }t\to\infty
\end{align}
\end{lemma}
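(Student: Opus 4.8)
The plan is to reduce the statement about $\varphi^{-N_t}$ to an almost-sure convergence of a backwards conditional expectation, using Lemma~\ref{phi order} to then transfer the conclusion to $\varphi^{N_t}$. First I would observe that $\varphi^{-n}(x)$ is a deterministic function of the tail-type data that generates the filtration; more precisely, fix any $x\in X$ and define $Z_n := \{\varphi^{-n}_\omega(x)\in D\}$, the indicator that the $n$-th backward iterate lands in $D$. By Proposition~\ref{prop:pullback}, $\varphi^{-n}(x)\to Y_1$ almost surely, and since $D$ is Borel one does \emph{not} in general get $Z_n\to\{Y_1\in D\}$ pointwise (the boundary of $D$ is a problem); so the cleaner route is to work at the level of conditional expectations and exploit that $\E[\{\varphi^{-n}\in D\}\mid \mathcal G_n]$, for an appropriate decreasing filtration $\mathcal G_n$, is a backwards martingale-like object. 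Concretely, let $\mathcal G_n$ be the $\sigma$-algebra generated by the ``later'' coordinates $\omega_{n+1},\omega_{n+2},\dots$ (so that $\mathcal G_n\supset\mathcal G_{n+1}$). Since $\varphi^{-n}(x) = G^1_\omega\circ\cdots\circ G^n_\omega(x)$ uses only $\omega_1,\dots,\omega_n$, while $\varphi^{-(n+1)}(x)$ tacks on one more composition at the \emph{inside}, the sequence is adapted to the product structure in a way that makes $\E[\,\cdot\mid\mathcal G_n]$ behave well.

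The key technical step is to invoke Lemma~\ref{backwards convergence}: with $X_n := \{\varphi^{-n}(x)\in D\}$ (which is $[0,1]$-valued, hence dominated by the integrable constant $1$) and $X_\infty := \{Y_1\in D\}$, we have $X_n\to X_\infty$ would need to hold a.s., which again fails on $\partial D$. To avoid this, I would instead first prove the assertion for $D$ an \emph{open} set and then a generic Borel set by a monotone-class/approximation argument; or, more robustly, note that what is actually needed downstream (in the proof of Theorem~\ref{conv in dist}) is convergence for $D$ ranging over a convergence-determining class, so it suffices to prove it for $D$ in such a class (e.g. open balls, or sets whose boundary has probability zero under the law of $Y_1$), and for those $D$ one does have $\{\varphi^{-n}(x)\in D\}\to\{Y_1\in D\}$ a.s. Then Lemma~\ref{backwards convergence} applies directly with $\mathcal F_t$ replaced by the discrete filtration $\mathcal G_n$, giving $\E[\{\varphi^{-n}\in D\}\mid \mathcal G_n]\to \E[\{Y_\infty\in D\}\mid \mathcal G_\infty]$ a.s. as $n\to\infty$, and because $Y_1$ is independent of $x$ and of all the switching data (Proposition~\ref{prop:pullback}), the tail $\sigma$-algebra $\mathcal G_\infty$ is trivial on the relevant event — or rather $\E[\{Y_1\in D\}\mid\mathcal G_\infty]=\P(Y_1\in D)=\E\{Y_1\in D\}$ since $Y_1$ is $\mathcal G_\infty$-measurable yet its law is what it is; the point is the limit is the deterministic number $\E\{Y_1\in D\}$.

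Next I would pass from the deterministic index $n$ to the random index $N_t$. Since $N_t\to\infty$ a.s. as $t\to\infty$ and $\mathcal F_t$ is precisely $\mathcal G_{N_t}$ augmented by $S_{N_t}$ (which does not affect the $x$-independent limit), Lemma~\ref{random indexing} converts the $n\to\infty$ statement into the $t\to\infty$ statement: $\E[\{\varphi^{-N_t}\in D\}\mid\mathcal F_t]\to\E\{Y_1\in D\}$ a.s. One subtlety is that $\mathcal F_t$ also remembers $S_{N_t}$, so I should check that conditioning on this extra information does not change the conditional expectation in the limit; this follows because $\varphi^{-N_t}$, as an iterated random function built from $\omega_1,\dots,\omega_{N_t}$, has conditional law given $\mathcal F_t$ that depends only on how many iterations have occurred, and its dependence on the realized values washes out in the limit exactly as in the fixed-$n$ backwards-martingale argument. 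Finally, the second display, the statement for $\varphi^{N_t}$, is immediate from Lemma~\ref{phi order}: $\E[\{\varphi^{N_t}\in D\}\mid\mathcal F_t]=\E[\{\varphi^{-N_t}\in D\}\mid\mathcal F_t]$ a.s., so it has the same limit $\E\{Y_1\in D\}$.

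The main obstacle is the boundary-of-$D$ issue: for a general Borel $D$ the indicator $\{\varphi^{-n}(x)\in D\}$ need not converge a.s. to $\{Y_1\in D\}$, so one cannot apply Lemma~\ref{backwards convergence} verbatim. I expect the clean resolution is to prove the result for a convergence-determining subclass of $D$'s (open sets, or continuity sets of the law of $Y_1$), which is all Theorem~\ref{conv in dist} actually uses when it invokes this lemma together with Theorem~2.8 of \cite{billingsley_convergence_1999}; alternatively one upgrades to all Borel $D$ by an outer-regularity/monotone-class argument using that finite measures on a separable Banach space are determined by their values on open sets. Everything else — the backwards-martingale convergence, the random-indexing step, and the transfer via Lemma~\ref{phi order} — is routine given the lemmas already proved.
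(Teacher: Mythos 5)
Your high-level skeleton (backwards-martingale convergence, then Lemma~\ref{phi order} to transfer from $\varphi^{-N_t}$ to $\varphi^{N_t}$) matches the paper, but the core of your argument has a genuine gap. Replacing $\mathcal F_t$ by the discrete filtration $\mathcal G_n=\sigma(\omega_{n+1},\omega_{n+2},\dots)$ defeats the purpose: $\varphi^{-n}(x)$ is a function of $\omega_1,\dots,\omega_n$ alone, hence independent of $\mathcal G_n$, so $\E[\{\varphi^{-n}\in D\}\mid\mathcal G_n]=\P(\varphi^{-n}\in D)$ is deterministic and the backwards-martingale step is vacuous in that setting. The entire difficulty of the lemma is that $\mathcal F_t$ also contains $S_{N_t}$, which is built from the very same first $N_t$ coordinates as $\varphi^{-N_t}$, so at finite $t$ the conditional expectation genuinely differs from the unconditional probability; your remark that this extra conditioning ``washes out in the limit exactly as in the fixed-$n$ argument'' is precisely the statement to be proved. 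Lemma~\ref{random indexing} cannot bridge this: it concerns a.s.\ convergence of random variables along random indices, not substitution of a random index into a family of conditional expectations whose conditioning $\sigma$-algebras change with the index (and $\mathcal F_t\neq\mathcal G_{N_t}$ in any case). The paper avoids the issue by applying Lemma~\ref{backwards convergence} directly to the decreasing family $(\mathcal F_t)_{t\ge0}$ with $X_t=\{\varphi^{-N_t}\in D\}$ (random indexing is used only to get $X_t\to\{Y_1\in D\}$ a.s.), obtaining the limit $\E[\{Y_1\in D\}\mid\mathcal F_\infty]$ with $\mathcal F_\infty=\cap_{t\ge0}\mathcal F_t$; the step entirely missing from your proposal is the identification of this limit with the constant $\E\{Y_1\in D\}$, which the paper gets by showing every $A\in\mathcal F_\infty$ is (mod null sets) invariant under finite permutations of the $\omega_k$'s and invoking the Hewitt--Savage zero--one law. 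Kolmogorov's tail law is not available here because of the $S_{N_t}$ component, and your justification is incorrect as stated: $Y_1$ is \emph{not} $\mathcal G_\infty$-measurable (it depends strongly on the early coordinates); what one needs is triviality of the limit $\sigma$-algebra, not measurability of $Y_1$ with respect to it.

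On the other hand, you correctly flag a real subtlety that the paper's own proof passes over: for a general Borel $D$ the indicators $\{\varphi^{-N_t}\in D\}$ need not converge a.s.\ to $\{Y_1\in D\}$, so the hypothesis of Lemma~\ref{backwards convergence} holds only when $\P(Y_1\in\partial D)=0$ (for instance, if both flows contract to a common point $y_0$ and $D=\{y_0\}$, the left side is identically $0$ while $\P(Y_1\in D)=1$). Your proposed repair---prove the statement for continuity sets of the law of $Y_1$, a convergence-determining class, which is all the weak-convergence argument in Theorem~\ref{conv in dist} actually needs---is the right one. But with the filtration substitution unjustified and the Hewitt--Savage triviality argument absent, your proposal does not yet establish the asymptotic independence from $\mathcal F_t$ that is the actual content of the lemma.
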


\begin{proof}
In light of Lemma~\ref{phi order}, it suffices to show the convergence in Equation~(\ref{f}).

Since $\varphi^{-n}\to Y_{1}$ almost surely as $n\to\infty$ and since $N_{t}\to\infty$ almost surely as $t\to\infty$, we have that $\varphi^{-N_{t}}\to Y_{1}$ almost surely by Lemma~\ref{random indexing}. %\begin{revision}
We claim that $\{\varphi^{-N_{t}}\in D\}\to\{Y_{1}\in D\}$ almost surely. Since $\varphi^{-N_{t}}\to Y_{1}$ almost surely and $\P(Y_1\in\partial D)=0$, there exists a set $S\subset\Omega$ of full measure such that if $\omega\in S$ then $\varphi^{-N_{t}}(\omega)\to Y_{1}(\omega)\notin \partial D$ as $t\to\infty$. Let $\omega\in S$. If $Y_1(\omega)\in D$, then $Y_1(\omega)$ must be in the interior of $D$ and hence there exists some $r>0$ such that the ball of radius $r$ centered at $Y_1(\omega)$ is contained in the interior of $D$. Since $\varphi^{-N_{t}}(\omega)\to Y_{1}(\omega)$, there must exist some $T$ such that $\varphi^{-N_{t}}(\omega)$ is within $r$ of $Y_1(\omega)$ for all $t\ge T$ and hence $\varphi^{-N_{t}}(\omega)\in D$ for all $t\ge T$. Thus, $\{\varphi^{-N_{t}}\in D\}(\omega)\to\{Y_{1}\in D\}(\omega)=1$. A similar argument shows that $\{\varphi^{-N_{t}}\in D\}(\omega)\to\{Y_{1}\in D\}(\omega)=0$ if $\omega\in S$ is such that $Y_1(\omega)\notin D$. Hence, $\{\varphi^{-N_{t}}\in D\}\to\{Y_{1}\in D\}$ on $S$ which has full measure so the convergence is almost sure.
%\end{revision}

Define $\mathcal{F}_{\infty}:=\cap_{t\ge0}\mathcal{F}_{t}$ and observe that $\mathcal{F}_{t}\subset\mathcal{F}_{s}$ for $t\ge s\ge0$. Thus, by Lemma~\ref{backwards convergence}, 
\begin{align*}
\E \left[\{\varphi^{-N_{t}}\in D\}|\mathcal{F}_{t}\right]\to \E \left[\{Y_{1}\in D\}|\mathcal{F}_{\infty}\right]\quad\text{almost surely as }t\to\infty.
\end{align*}

To complete the proof, we will show that for every $A\in\mathcal{F}_{\infty}$, $\P(A)=0$ or 1. To show this, we will show that $\mathcal{F}_{\infty}$ is contained in the exchangeable $\sigma$-algebra and then apply the Hewitt-Savage zero-one law. Let $n\in\mathbb{N}$, $A\in\mathcal{F}_{\infty}$, and $\pi_{n}$ be an arbitrary permutation of $\omega_{1},\dots,\omega_{n}$.
Define $\pi_{t}:\Omega\to\Omega$ by
\begin{align*}
\pi_{t}(\omega) & = \begin{cases}
\pi_{n}(\omega)\quad & N_{t}\ge n\\
\omega\quad & N_{t}<n.
\end{cases}
\end{align*}
Since $S_{N_{t}}$ and $\displaystyle\{(\tau^{k}_{0},\tau^{k}_{1})\}_{k=N_{t}+1}^{\infty}$ are $\pi_{t}$-invariant, then $A$ is $\pi_{t}$-invariant by Lemma~\ref{invariant sets} as $A\in\mathcal{F}_{\infty}\subset\mathcal{F}_{t}$.
Therefore
\begin{align*}
\P(A\Delta\pi^{-1}_{n}A,N_{t}\ge n)
 = \P(A\Delta\pi^{-1}_{t}A,N_{t}\ge n)
 \le \P(A\Delta\pi^{-1}_{t}A)
 = 0.
\end{align*}
Hence
\begin{align*}
\P(A\Delta\pi^{-1}_{n}A) & = \P(A\Delta\pi^{-1}_{n}A,N_{t}\ge n)+\P(A\Delta\pi^{-1}_{n}A,N_{t}< n)\\
& \le \P(A\Delta\pi^{-1}_{n}A,N_{t}< n)\le \P(N_{t}< n).
\end{align*}

Since $t$ was arbitrary, and because $\P(N_{t}< n)\to0$ as $t\to\infty$ since $N_{t}\to\infty$ almost surely, we conclude that $\P(A\Delta\pi^{-1}_{n}A)=0$. Since $\pi_{n}$ was an arbitrary finite permutation, we conclude that $\mathcal{F}_{\infty}$ is contained in the exchangeable $\sigma$-algebra. By the Hewitt-Savage zero-one law, $\mathcal{F}_{\infty}$ only contains events that have probability 0 or 1. Thus, $\{Y_{1}\in D\}$ is trivially independent of $\mathcal{F}_{\infty}$ and therefore $\E \left[\{Y_{1}\in D\}|\mathcal{F}_{\infty}\right] = \E \{Y_{1}\in D\}$.
\qquad\end{proof}

\begin{lemma}\label{asymp indep2}
For each $t\ge0$, define $\mathcal{F}'_{t}$ to be the $\sigma$-algebra generated by $S'_{N_{t}+1}$, $\tau^{N_{t}+1}_{1}$, and $\displaystyle\{(\tau^{k}_{0},\tau^{k}_{1})\}_{k=N_{t}+2}^{\infty}$. Then
\begin{align*}
J_{t}\E \left[\{\tau_{0}^{N_{t}+1}\in B\}|\mathcal{F'}_{t}\right] = J_{t}\E \left[\{\tau_{0}^{1}\in B\}|\mathcal{F'}_{t}\right]\quad\text{almost surely.}
\end{align*}
\end{lemma}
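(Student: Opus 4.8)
The plan is to mimic the permutation argument used to prove Lemma~\ref{phi order}. Fix $t\ge0$. Since $J_{t}$ is $\mathcal{F}'_{t}$-measurable, both sides of the asserted identity are $\mathcal{F}'_{t}$-measurable, so it suffices to show $\E[1_{A}J_{t}\{\tau_{0}^{N_{t}+1}\in B\}] = \E[1_{A}J_{t}\{\tau_{0}^{1}\in B\}]$ for every $A\in\mathcal{F}'_{t}$. Writing $E:=A\cap\{J_{t}=1\}\in\mathcal{F}'_{t}$ and using the definition of conditional expectation, this reduces to proving $\E[1_{E}\{\tau_{0}^{N_{t}+1}\in B\}] = \E[1_{E}\{\tau_{0}^{1}\in B\}]$, i.e. the claim for all $\mathcal{F}'_{t}$-measurable $E\subseteq\{J_{t}=1\}$.

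To produce such an identity I would define $\sigma_{t}:\Omega\to\Omega$ to be the identity on $\{J_{t}=0\}$ and, on the event $\{J_{t}=1,\,N_{t}=n\}$, the map that interchanges the coordinates $\tau_{0}^{1}$ and $\tau_{0}^{n+1}$ while leaving every $\tau_{1}^{k}$ and every $\tau_{0}^{k}$ with $k\notin\{1,n+1\}$ fixed. The essential observation is that on $\{J_{t}=1,\,N_{t}=n\}$ this swap leaves $\tau_{1}^{n+1}$, $S'_{n+1}=S_{n}+\tau_{0}^{n+1}$, and $S_{n+1}=S'_{n+1}+\tau_{1}^{n+1}$ all unchanged, while it replaces $S_{n}$ by $S'_{n+1}-\tau_{0}^{1}$. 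Because $J_{t}=1$ forces $S'_{n+1}\le t$ and the $\tau$'s are nonnegative, the new value of $S_{n}$ is still $\le t$, and since $S_{n+1}>t$ is unchanged, the image point again satisfies $N_{t}=n$ and $J_{t}=1$. Hence $\sigma_{t}$ maps each $\{J_{t}=1,\,N_{t}=n\}$ involutively onto itself, and is globally an involution of $\Omega$. On each such set $\sigma_{t}$ agrees with the genuine coordinate transposition $\tau_{0}^{1}\leftrightarrow\tau_{0}^{n+1}$, which is $\P$-preserving and for which $\{J_{t}=1,\,N_{t}=n\}$ is an invariant set, so summing over $n$ (and using the identity on $\{J_{t}=0\}$) shows $\sigma_{t}$ is measure preserving. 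Moreover $S'_{N_{t}+1}$, $\tau_{1}^{N_{t}+1}$, and $\{(\tau_{0}^{k},\tau_{1}^{k})\}_{k\ge N_{t}+2}$ are all $\sigma_{t}$-invariant, so by Lemma~\ref{invariant sets} every $A\in\mathcal{F}'_{t}$ is $\sigma_{t}$-invariant, and $\{J_{t}=1\}=\bigcup_{n}\{J_{t}=1,N_{t}=n\}$ is $\sigma_{t}$-invariant too.

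To finish I would note that on $\{J_{t}=1\}$ we have $\{\tau_{0}^{N_{t}+1}\in B\}(\omega)=\{\tau_{0}^{1}\in B\}(\sigma_{t}(\omega))$, since $\sigma_{t}$ carries the value $\tau_{0}^{N_{t}+1}$ into the first coordinate. Then for $E=A\cap\{J_{t}=1\}$, which is $\sigma_{t}$-invariant, a change of variables under the measure-preserving $\sigma_{t}$ gives $\E[1_{E}\{\tau_{0}^{N_{t}+1}\in B\}]=\E\big[(1_{E}\{\tau_{0}^{1}\in B\})\circ\sigma_{t}\big]=\E[1_{E}\{\tau_{0}^{1}\in B\}]$, which is exactly the required identity; undoing the reduction of the first paragraph yields the lemma.

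I expect the main obstacle to be the verification in the second paragraph that the transposition preserves the event $\{J_{t}=1,\,N_{t}=n\}$. This is the only place the hypothesis $J_{t}=1$ enters, and it explains why the statement must carry the factor $J_{t}$: on $\{J_{t}=0\}$ the value $\tau_{0}^{N_{t}+1}$ has not yet been constrained to lie below $t$, so moving it into the first coordinate can change $N_{t}$ and the construction collapses. The remaining ingredients — measure preservation via the piecewise-transposition structure, $\sigma_{t}$-invariance of the generators of $\mathcal{F}'_{t}$, and the concluding change of variables — are routine and exactly parallel the proof of Lemma~\ref{phi order}.
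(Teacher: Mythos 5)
Your proposal is correct and follows essentially the same route as the paper: the swap $\sigma_{t}$ of $\tau_{0}^{1}$ and $\tau_{0}^{N_{t}+1}$ on $\{J_{t}=1\}$, measure preservation, $\sigma_{t}$-invariance of the generators of $\mathcal{F}'_{t}$ via Lemma~\ref{invariant sets}, and a change of variables; you even supply the verification (which the paper leaves implicit) that the swap preserves $\{J_{t}=1,N_{t}=n\}$. The only cosmetic difference is the finish: you use $\mathcal{F}'_{t}$-measurability of $J_{t}$ to reduce directly to equality of integrals against sets $A\in\mathcal{F}'_{t}$, whereas the paper concludes with an $\epsilon$-level-set argument, but these are interchangeable.
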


\begin{remark}
{\rm
Recall that $J_{t}$ is either 0 if $S_{N_{t}}\le t<S'_{N_{t}+1}$ or 1 if $S'_{N_{t}+1}\le t$. Hence, this Lemma states that $\E \left[\{\tau_{0}^{N_{t}+1}\in B\}|\mathcal{F'}_{t}\right] = \E \left[\{\tau_{0}^{1}\in B\}|\mathcal{F'}_{t}\right]$ if $J_{t}=1$.}
\end{remark}
\begin{remark}
{\rm
The proof of this Lemma is very similar to the proof of Lemma~\ref{phi order}.}
\end{remark}

\begin{proof}
If $\omega$ is such that $J_{t}=0$, then the equality is trivially satisfied.
Let $A\in\mathcal{F}'_{t}$. Since $\{\omega\in\Omega:J_{t}(\omega)=1\}\in\mathcal{F}'_{t}$, we have by the definition of conditional expectation that
\begin{align*}
& \int_{\Omega}\E \left[\{\tau_{0}^{N_{t}+1}\in B\}|\mathcal{F}'_{t}\right]\{A,J_{t}=1\}\,d\P 
 = \int_{\Omega}\{\tau_{0}^{N_{t}+1}\in B\}(\omega)\{A,J_{t}=1\}(\omega)\,d\P.
\end{align*}
Define $\sigma_{t}:\Omega\to\Omega$ by
\begin{align*}
(\sigma_{t}(\omega))_{k} & = \begin{cases}
(\tau_{0}^{N_{t}+1},\tau^{1}_{1})\quad & \text{if $k=1$ and $J_{t}=1$}\\
(\tau_{0}^{1},\tau^{N_{t}+1}_{1})\quad & \text{if $k=N_{t}+1$ and $J_{t}=1$}\\
\omega_{k}\quad & \text{otherwise}.
\end{cases}
\end{align*}
That is, $\sigma_{t}$ switches $\tau_{0}^{1}$ and $\tau_{0}^{N_{t}+1}$ if $J_{t}=1$ and  otherwise does nothing. Since $S'_{N_{t}+1}$, $\tau^{N_{t}+1}_{1}$, and $\displaystyle\{(\tau^{k}_{0},\tau^{k}_{1})\}_{k=N_{t}+2}^{\infty}$ are all $\sigma_{t}$-invariant, we have that $A$ is $\sigma_{t}$-invariant by Lemma~\ref{invariant sets}. Also observe that $\{J_{t}=1\}$ is $\sigma_{t}$-invariant. Thus
\begin{align*}
\int_{\Omega}\{\tau_{0}^{N_{t}+1}\in B\}(\omega)\{A,J_{t}=1\}(\omega)\,d\P
& = \int_{\Omega}\{\tau_{0}^{1}\in B\}(\sigma_{t}(\omega))\{A,J_{t}=1\}(\sigma_{t}(\omega))\,d\P.
\end{align*}
Since $\sigma_{t}$ is measure preserving, and by the definition of conditional expectation, we have that
\begin{align*}
\int_{\Omega}\{\tau_{0}^{1}\in B\}(\sigma_{t}(\omega))\{A,J_{t}=1\}(\sigma_{t}(\omega))\,d\P
& = \int_{\Omega}\{\tau_{0}^{1}\in B\}(\omega)\{A,J_{t}=1\}(\omega)\,d\P\\
& = \int_{\Omega}\E \left[\{\tau_{0}^{1}\in B\}|\mathcal{F}'_{t}\right]\{A,J_{t}=1\}\,d\P.
\end{align*}
Putting all this together,
\begin{align*}
\int_{\Omega}\E [\{\tau_{0}^{N_{t}+1}\in B\}|\mathcal{F}'_{t}]\{A,J_{t}=1\}\,d\P
& = \int_{\Omega}\E [\{\tau_{0}^{1}\in B\}|\mathcal{F}'_{t}]\{A,J_{t}=1\}\,d\P.
\end{align*}

This implies that $\E [\{\tau_{0}^{N_{t}+1}\in B\}|\mathcal{F'}_{t}] = \E [\{\tau_{0}^{1}\in B\}|\mathcal{F'}_{t}]$ almost surely on $\{J_{t}=1\}$. To see this, let $\epsilon>0$ define $\Lambda:=\{\omega\in\Omega:\E [\{\tau_{0}^{N_{t}+1}\in B\}|\mathcal{F'}_{t}]-\E [\{\tau_{0}^{1}\in B\}|\mathcal{F'}_{t}]\ge\epsilon\}$. This set is in $\mathcal{F}'_{t}$, so by the above calculation we have that
\begin{align*}
0 = \int_{\Lambda\cap\{J_{t}=1\}}\E [\{\tau_{0}^{N_{t}+1}\in B\}|\mathcal{F'}_{t}]-\E [\{\tau_{0}^{1}\in B\}|\mathcal{F'}_{t}]\,d\P
\ge \epsilon \P(\Lambda\cap\{J_{t}=1\}).
\end{align*}
So $\P(\Lambda\cap\{J_{t}=1\})=0$. The same argument with $\Lambda':=\{\omega\in\Omega:\E [\{\tau_{0}^{1}\in B\}|\mathcal{F}'_{t}]-\E [\{\tau_{0}^{N_{t}+1}\in B\}|\mathcal{F}'_{t}]\ge\epsilon\}$ completes the proof of the claim. Therefore, $J_{t}\E [\{\tau_{0}^{N_{t}+1}\in B\}|\mathcal{F}'_{t}] = J_{t}\E [\{\tau_{0}^{1}\in B\}|\mathcal{F}'_{t}]$ almost surely.
\qquad\end{proof}

\begin{lemma}\label{asymp indep3}
For each $t\ge0$, define $\mathcal{F}'_{t}$ to be the $\sigma$-algebra generated by $S'_{N_{t}+1}$, $\tau^{N_{t}+1}_{1}$, and $\displaystyle\{(\tau^{k}_{0},\tau^{k}_{1})\}_{k=N_{t}+2}^{\infty}$. Then
\begin{align*}
\E \left[\{\tau_{0}^{1}\in B\}|\mathcal{F'}_{t}\right]\to \E \{\tau_{0}\in B\}\quad\text{almost surely as }t\to\infty.
\end{align*}
\end{lemma}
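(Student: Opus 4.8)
The plan is to mirror the proof of Lemma~\ref{asymp indep}, with the fixed random variable $1_{\{\tau_0^1\in B\}}$ playing the role that $Y_1$ played there. Since this random variable does not depend on $t$, there is no random‑indexing step: the argument reduces to a backwards martingale convergence statement followed by a zero–one law identifying the tail $\sigma$-algebra. No non-arithmetic hypothesis is needed here; the statement is pure measure theory.

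First I would check that the $\sigma$-algebras $\mathcal{F}'_t$ form a \emph{decreasing} filtration, i.e.\ $\mathcal{F}'_t\subset\mathcal{F}'_s$ for $0\le s\le t$. Given $\mathcal{F}'_s$ one recovers $S_{N_s+1}=S'_{N_s+1}+\tau^{N_s+1}_1$ and, together with the tail $\{(\tau^k_0,\tau^k_1)\}_{k\ge N_s+2}$, all partial sums $S_m$ for $m\ge N_s+1$; since $S_{N_s+1}$ is known one can decide whether $S_{N_s+1}\le t$, i.e.\ whether $N_t\ge N_s+1$, and on that event $N_t$ itself is determined, while on its complement $N_t=N_s$. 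In either case $S'_{N_t+1}$, $\tau^{N_t+1}_1$ and $\{(\tau^k_0,\tau^k_1)\}_{k\ge N_t+2}$ are $\mathcal{F}'_s$-measurable, so $\mathcal{F}'_t\subset\mathcal{F}'_s$. Applying Lemma~\ref{backwards convergence} to the constant sequence $X_t=X_\infty=1_{\{\tau_0^1\in B\}}$ (bounded by $1$, hence integrable) then gives
\begin{align*}
\E\bigl[\{\tau_0^1\in B\}\,\big|\,\mathcal{F}'_t\bigr]\to\E\bigl[\{\tau_0^1\in B\}\,\big|\,\mathcal{F}'_\infty\bigr]\qquad\text{a.s. as }t\to\infty,
\end{align*}
where $\mathcal{F}'_\infty:=\bigcap_{t\ge0}\mathcal{F}'_t$.

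It remains to show $\mathcal{F}'_\infty$ is $\P$-trivial, which I would carry out exactly as in Lemma~\ref{asymp indep}. Fix $n\in\mathbb{N}$, $A\in\mathcal{F}'_\infty$, and a permutation $\pi_n$ of $\omega_1,\dots,\omega_n$; set $\pi_t=\pi_n$ on $\{N_t\ge n\}$ and $\pi_t=\mathrm{id}$ otherwise. On $\{N_t\ge n\}$, permuting the first $n$ pairs leaves the symmetric sum $\sum_{k=1}^{N_t}(\tau^k_0+\tau^k_1)$ unchanged, hence leaves $S_{N_t}$, $S_{N_t+1}$ and therefore $N_t$ unchanged, and it leaves $\tau^{N_t+1}_0$, $\tau^{N_t+1}_1$ and the tail $\{(\tau^k_0,\tau^k_1)\}_{k\ge N_t+2}$ untouched; thus $S'_{N_t+1}=\sum_{k=1}^{N_t}(\tau^k_0+\tau^k_1)+\tau^{N_t+1}_0$, together with $\tau^{N_t+1}_1$ and the tail, is $\pi_t$-invariant. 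By Lemma~\ref{invariant sets}, every set in $\mathcal{F}'_t$ — in particular $A\in\mathcal{F}'_\infty\subset\mathcal{F}'_t$ — is $\pi_t$-invariant, so $\P(A\,\Delta\,\pi_n^{-1}A,\,N_t\ge n)=\P(A\,\Delta\,\pi_t^{-1}A,\,N_t\ge n)\le\P(A\,\Delta\,\pi_t^{-1}A)=0$, whence $\P(A\,\Delta\,\pi_n^{-1}A)\le\P(N_t<n)\to0$ as $t\to\infty$. Thus $\mathcal{F}'_\infty$ lies in the exchangeable $\sigma$-algebra of the i.i.d.\ sequence $(\omega_k)$, and the Hewitt–Savage zero–one law makes it $\P$-trivial. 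Consequently $1_{\{\tau_0^1\in B\}}$ is independent of $\mathcal{F}'_\infty$, so $\E[\{\tau_0^1\in B\}\,|\,\mathcal{F}'_\infty]=\E\{\tau_0^1\in B\}=\mu_0(B)=\E\{\tau_0\in B\}$, completing the proof.

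The main obstacle is the bookkeeping in the first two steps: confirming that $\mathcal{F}'_t$ is genuinely a decreasing filtration (so that Lemma~\ref{backwards convergence}, equivalently the backwards martingale convergence theorem, applies) and checking that the extra generator $S'_{N_t+1}$ — which is not itself symmetric in $\omega_1,\dots,\omega_{N_t}$ but equals a symmetric sum plus the single term $\tau^{N_t+1}_0$ — is $\pi_t$-invariant. Once these are settled, the rest is a routine transcription of the argument already given for Lemma~\ref{asymp indep}.
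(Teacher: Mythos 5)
Your proposal is correct and follows essentially the same route as the paper: apply Lemma~\ref{backwards convergence} to the constant random variable $1_{\{\tau_0^1\in B\}}$ along the decreasing filtration $\mathcal{F}'_t$, then show $\mathcal{F}'_\infty$ lies in the exchangeable $\sigma$-algebra via the randomized permutation $\pi_t$ and invoke Hewitt--Savage. The extra bookkeeping you supply (that $\mathcal{F}'_t$ is decreasing and that $S'_{N_t+1}$ is $\pi_t$-invariant) only makes explicit what the paper asserts without detail.
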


\begin{remark}
{\rm The proof of this Lemma is very similar to the proof of Lemma~\ref{asymp indep}.}
\end{remark}

\begin{proof}
Define $\mathcal{F'}_{\infty}:=\cap_{t\ge0}\mathcal{F}'_{t}$ and observe that $\mathcal{F'}_{s}\supset\mathcal{F'}_{t}$ for $0\le s\le t$. Thus, by Lemma~\ref{backwards convergence} 
\begin{align*}
\E \left[\{\tau_{0}^{1}\in B\}|\mathcal{F'}_{t}\right]\to \E \left[\{\tau_{0}^{1}\in B\}|\mathcal{F}'_{\infty}\right]\quad\text{almost surely.}
\end{align*}

We claim that for each $A\in\mathcal{F}'_{\infty}$, $\P(A)=0$ or 1. To show this, we will show that $\mathcal{F}'_{\infty}$ is contained in the exchangeable $\sigma$-algebra and then apply the Hewitt-Savage zero-one law. Let $n\in\mathbb{N}$, $A\in\mathcal{F}'_{\infty}$, and $\pi_{n}$ be an arbitrary permutation of $(\tau_{0}^{1},\tau_{1}^{1}),\dots,(\tau_{0}^{n},\tau_{1}^{n})$.

Define $\pi_{t}:\Omega\to\Omega$ by
\begin{align*}
\pi_{t}(\omega) & = \begin{cases}
\pi_{n}(\omega)\quad & N_{t}\ge n\\
\omega\quad & N_{t}<n.
\end{cases}
\end{align*}
Since $S'_{N_{t}+1}$, $\tau^{N_{t}+1}_{1}$, and $\displaystyle\{(\tau^{k}_{0},\tau^{k}_{1})\}_{k=N_{t}+2}^{\infty}$ are $\pi_{t}$-invariant, then $A$ is $\pi_{t}$-invariant by Lemma~\ref{invariant sets} as $A\in\mathcal{F}'_{\infty}\subset\mathcal{F}'_{t}$.
Therefore
\begin{align*}
\P(A\Delta\pi^{-1}_{n}A,N_{t}\ge n)
 = \P(A\Delta\pi^{-1}_{t}A,N_{t}\ge n)
 \le \P(A\Delta\pi^{-1}_{t}A)
 = 0.
\end{align*}
Hence
\begin{align*}
\P(A\Delta\pi^{-1}_{n}A) & = \P(A\Delta\pi^{-1}_{n}A,N_{t}\ge n)+\P(A\Delta\pi^{-1}_{n}A,N_{t}< n)\\
& \le \P(A\Delta\pi^{-1}_{n}A,N_{t}< n) \le \P(N_{t}< n).
\end{align*}
Since $t$ was arbitrary, we conclude that $\P(A\Delta\pi^{-1}_{n}A)=0$ because $\P(N_{t}< n+1)\to0$ as $t\to\infty$ since $N_{t}\to\infty$ almost surely. Since $\pi_{n}$ was an arbitrary finite permutation, we conclude that $\mathcal{F}'_{\infty}$ is contained in the exchangeable $\sigma$-algebra. By the Hewitt-Savage zero-one law, $\mathcal{F}'_{\infty}$ contains only events that have probability 0 or 1. Thus, $\{\tau_{0}^{1}\in C\}$ is trivially independent of $\mathcal{F}'_{\infty}$ and so we conclude that $\E \left[\{\tau_{0}^{1}\in C\}|\mathcal{F}'_{\infty}\right] = \E \{\tau_{0}^{1}\in C\} = \E \{\tau_{0}\in C\}$.
\qquad\end{proof}

%%%%%%%%%%%%%%%%%%%%%%%%%%%%%%%%%%%%%%%%%%%%%%%%%%%%%%%%%%%%%%%

\section{PDEs with randomly switching boundary conditions}\label{pdechapter}

We now use our results from Section~\ref{hilbertchapter} to study parabolic PDEs with randomly switching boundary conditions. Our results apply to a range of specific problems, so in Section \ref{general examples} we explain how to cast a problem in our framework. 
 In Section~\ref{pde setup} we 
collect assumptions and in Section~\ref{mean} we prove theorems about the mean of the process.

%%%%%%%%%%%%%%%%%%%%%%%%%%%%%%%%%%%%%%%%%%%%%%%%%%%%%%%%%%%%%

\subsection{General setup}\label{general examples}

Our results can be applied to the following type of random PDE. Suppose we are given a strongly elliptic, symmetric, linear, second order differential operator $L$ on a domain $D\subset\mathbb{R}^{d}$ with smooth coefficients which do not depend on $t$. Assume the domain $D$ is bounded with a smooth boundary.  We consider the stochastic process $u(t,x)$ that solves 
\begin{align}\label{pde}
\partial_{t} u = L u \quad\text{in }D
\end{align}
subject to boundary conditions that switch at random times between two given boundary conditions, $(a)$ and $(b)$. We allow $(a)$ and $(b)$ to be different types; for example, one can be Dirichlet and the other Neumann. 
For the sake of presentation, we assume $(a)$ are homogenous, but our analysis is easily modified to include the case where $(a)$ are inhomogenous.

We formulate this problem in the setting of Section~\ref{hilbertchapter} as alternating flows on the Hilbert space $L^{2}(D)$. We define
\begin{align*}
Au  := L u \quad\text{if $u\in D(A)$}
\qquad\text{and}\qquad
Bu  := L u \quad\text{if $u\in D(B)$}
\end{align*}
where $D(A)$ is chosen so that $A$ generates the contraction $C_{0}$-semigroup that maps an initial condition to the solution of Equation~(\ref{pde}) at time $t$ subject to boundary conditions $(a)$, and $D(B)$ is chosen so that $B$ generates the contraction $C_{0}$-semigroup that maps an initial condition to the solution of Equation~(\ref{pde}) at time $t$ subject to the homogenous version of boundary conditions $(b)$. We then choose $h(t):[0,\infty)\to D(L)$ to satisfy $\partial_{t}h=Lh$ with boundary conditions $(b)$ and initial condition $h(0)=0$. Then
the $H$-valued process defined in Equation~(\ref{process definition}) in Section~\ref{hilbertchapter} with $\Phi^{1}_{t}(f) = e^{At}f$ and $\Phi^{0}_{t}(f) = e^{Bt}f+h(t)$ corresponds to this random PDE.

%%%%%%%%%%%%%%%%%%%%%%%%%%%%%%%%%%%%%%%%%%%%%%%%%%%%%%%%%%%%

\subsection{Assumptions}\label{pde setup}

We now formalize the setup from Section~\ref{general examples}. Let $H$ be a real separable Hilbert space with inner product $\<\cdot,\cdot\>$ and let $A$ and $B$ be two self-adjoint operators on $H$, one with strictly negative spectrum and one with non-positive spectrum. Hence, $A$ and $B$ generate contraction $C_{0}$-semigroups, which we denote respectively by $e^{At}$ and $e^{Bt}$. Assume $A=B$ on $D(A)\cap D(B)\ne\emptyset$. Assume there exists a continuous function $h(t):[0,\infty)\to H$ satisfying $h(0)=0$ and $\frac{d}{dt}\<\phi,h(t)\>=\<B\phi,h(t)\>$ for all $\phi\in D(A)\cap D(B)$.
Recalling notation from Section~\ref{section:discrete}, let the switching time distributions, $\mu_{0}$ and $\mu_{1}$, be continuous distributions on the positive real line.

Let $u(t,\omega)$ be the $H$-valued process defined in Equation~(\ref{process definition}) in Section~\ref{section:cts} with
\begin{align*}
\Phi^{1}_{t}(f)  = e^{At}f
\qquad\text{and}\qquad
\Phi^{0}_{t}(f)  = e^{Bt}f + h(t). 
\end{align*}
It's easy to check that $\Phi^{1}_{t}$ and $\Phi^{0}_{t}$ satisfy assumptions (a)-(e) from Section~\ref{section:discrete}.

Assume there exists a deterministic $M=M(u_{0})$ so that with probability one, $\|u(t)\|\le M$ for each $t\ge0$, where $\|x\|:=\sqrt{\<x,x\>}$.

For every $0<s\le t$, define $\eta(s,t)$ to be the random variable that gives the number of switches that occur on the interval $(s,t)$. Formally, we define $\eta(s,t)$ by taking the supremum over partitions $\sigma$ of the interval $(s,t)$, $s=\sigma_{0}<\sigma_{1}<\dots<\sigma_{k}<\sigma_{k+1}=t$,
\begin{align}\label{eta definition}
\eta(s,t)(\omega):= \sup_{\sigma}\sum_{i=0}^{k}|J_{\sigma_{i+1}}(\omega)-J_{\sigma_{i}}(\omega)|,
\end{align}
where $J_{t}$ is as in Equation~(\ref{z defn}). Assume that $\mu_{0}$ and $\mu_{1}$ are such that for every $t>0$, we have that as $s\to0$,
\begin{align*}
\P\left(\eta(t,t+s)=1\right) & =O(s)
\quad\text{and}\quad
\P\left(\eta(t,t+s)\ge2\right)  =o(s).
\end{align*}

%%%%%%%%%%%%%%%%%%%%%%%%%%%%%%%%%%%%%%%%%%%%%%%%%%%%%%%%%%%

\subsection{The mean satisfies the PDE}\label{mean}
In what follows, fix $\phi\in D(A)\cap D(B)$, which will serve as our test function. 
The following theorem states that the mean of our process satisfies the weak form of the PDE.
%\begin{revision}
We note that we use $\E$ to denote the Bochner integral of Hilbert space-valued random variables.
%\end{revision}
\begin{theorem}\label{exppde}
For each $\phi\in D(A)\cap D(B)$ and $t>0$, we have that
\begin{align*}
\frac{d}{dt}\<\phi,\E u(t)\> & = \<A\phi,\E u(t)\>.
\end{align*}
\end{theorem}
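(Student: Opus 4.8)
The cleanest route is pathwise. The plan is: fix the switching environment $\omega\in\Omega$, show that $t\mapsto\<\phi,u(t,\omega)\>$ is absolutely continuous with $\tfrac{d}{dt}\<\phi,u(t,\omega)\>=\<A\phi,u(t,\omega)\>$ for a.e.\ $t$, integrate that identity, and then take expectations and differentiate back using the fundamental theorem of calculus.

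First I would record two elementary facts about $u(\cdot,\omega)$. Since $S_n\to\infty$ almost surely (the $\tau$'s being i.i.d.\ positive), $N_t<\infty$ for each $t$, so a compact interval $[0,t]$ contains only finitely many switch times $S_k,S'_{k+1}$; moreover $u(\cdot,\omega)$ is continuous on $[0,t]$, since assumption (b) on the $\Phi^i$, the continuity of $h$, and $\Phi^i_0=\mathrm{id}$ make the pieces in \eqref{process definition} match at the switch times. On an open interval on which $J_\cdot=1$, \eqref{process definition} gives $u(s,\omega)=e^{A(s-t_0)}g$ for the appropriate $t_0$ and $g\in H$; using that $e^{At}$ is self-adjoint ($A$ being self-adjoint) and $\phi\in D(A)$, I get $\<\phi,u(s,\omega)\>=\<e^{A(s-t_0)}\phi,g\>$, whose $s$-derivative is $\<Ae^{A(s-t_0)}\phi,g\>=\<A\phi,u(s,\omega)\>$ by the standard $C_0$-semigroup fact $\tfrac{d}{dr}e^{Ar}\phi=Ae^{Ar}\phi$ for $\phi\in D(A)$. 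On an interval on which $J_\cdot=0$, $u(s,\omega)=e^{B(s-t_0)}g+h(s-t_0)$, so $\<\phi,u(s,\omega)\>=\<e^{B(s-t_0)}\phi,g\>+\<\phi,h(s-t_0)\>$; the first term differentiates to $\<B\phi,e^{B(s-t_0)}g\>$ as before, and the second to $\<B\phi,h(s-t_0)\>$ by the assumed identity $\tfrac{d}{dr}\<\phi,h(r)\>=\<B\phi,h(r)\>$, so the derivative is $\<B\phi,u(s,\omega)\>=\<A\phi,u(s,\omega)\>$, the last equality because $A=B$ on $D(A)\cap D(B)\ni\phi$. The key observation is that the inhomogeneous term $h$ contributes exactly the extra piece needed for the answer to close on $\<A\phi,u(s,\omega)\>$ with nothing left over.

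Since $|\<A\phi,u(s,\omega)\>|\le\|A\phi\|\,M$ by the standing boundedness assumption, $\<\phi,u(\cdot,\omega)\>$ is continuous and piecewise $C^1$ with uniformly bounded derivative on $[0,t]$, hence Lipschitz, so $\<\phi,u(t,\omega)\>=\<\phi,u_0\>+\int_0^t\<A\phi,u(s,\omega)\>\,ds$. Taking expectations and applying Fubini (legitimate since the integrand is bounded by $\|A\phi\|M$ on $[0,t]\times\Omega$, and $\omega\mapsto u(s,\omega)$ is strongly measurable so $\E u(s)$ exists as a Bochner integral with $\<\psi,\E u(s)\>=\E\<\psi,u(s)\>$) gives $\<\phi,\E u(t)\>=\<\phi,\E u_0\>+\int_0^t\<A\phi,\E u(s)\>\,ds$. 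Finally $s\mapsto\<A\phi,\E u(s)\>=\E\<A\phi,u(s,\cdot)\>$ is continuous by dominated convergence (each path $s\mapsto u(s,\omega)$ is continuous and $\|u(s,\omega)\|\le M$), so the fundamental theorem of calculus yields $\tfrac{d}{dt}\<\phi,\E u(t)\>=\<A\phi,\E u(t)\>$.

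I do not anticipate a genuine obstacle. The points needing care are (i) the pathwise continuity of $u(\cdot,\omega)$ across switch times together with finiteness of the switch count on $[0,t]$, (ii) the self-adjointness trick, which is what lets one differentiate $\<\phi,u(s,\omega)\>$ without knowing $u(s,\omega)\in D(A)$, and (iii) the bookkeeping showing that $h$'s contribution telescopes into the stated right-hand side. A generator-style alternative would condition on $\mathcal F_t$ and expand over a short step using the standing hypotheses $\P(\eta(t,t+s)=1)=O(s)$ and $\P(\eta(t,t+s)\ge2)=o(s)$ to discard multi-switch events; this also works but needs an equicontinuity estimate on $\{\|x\|\le M\}$ that the pathwise argument sidesteps.
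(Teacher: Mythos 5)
Your proof is correct, but it takes a genuinely different route from the paper's. The pathwise differentiation step you perform away from switch times --- moving the semigroup onto the test function by self-adjointness, using $\frac{d}{dr}e^{Ar}\phi=Ae^{Ar}\phi$ for $\phi\in D(A)$, the identity $\frac{d}{dr}\<\phi,h(r)\>=\<B\phi,h(r)\>$, and $A=B$ on $D(A)\cap D(B)$ --- is exactly the paper's Lemma~\ref{differentiate}. From there the arguments diverge: the paper differentiates $\E\<\phi,u(t)\>$ directly via difference quotients at a fixed $t_{0}$, splitting each increment according to the number of switches $\eta(t_{0},t_{0}+h_{n})$, and controls the no-switch term by the mean value theorem and bounded convergence, the one-switch term by the uniform weak-continuity estimate of Lemma~\ref{unif cts2} together with $\P(\eta=1)=O(h_{n})$, and the multi-switch term by the crude bound $2\|\phi\|M/h_{n}$ together with $\P(\eta\ge2)=o(h_{n})$. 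You instead integrate the pathwise identity over $[0,t]$ (valid since each path is continuous, has finitely many switches on compacts, and is piecewise $C^{1}$ with derivative bounded by $\|A\phi\|M$, hence Lipschitz), exchange $\int_{0}^{t}$ with $\E$ by Fubini, and recover the derivative by the fundamental theorem of calculus after noting that $s\mapsto\E\<A\phi,u(s)\>$ is continuous by dominated convergence. Your route is more economical: it never uses the standing assumptions $\P(\eta(t,t+s)=1)=O(s)$ and $\P(\eta(t,t+s)\ge2)=o(s)$, nor Lemma~\ref{unif cts2}, needing only the a.s.\ bound $\|u(t)\|\le M$, finiteness of $N_{t}$, and strong measurability of $\omega\mapsto u(s,\omega)$ (automatic here since $H$ is separable and the process is built from maps continuous in $t$; joint measurability for Fubini then follows from pathwise continuity in $s$). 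What the paper's generator-style computation buys is an argument localized at a single time, of the kind one would use to identify the generator of the full hybrid process, and it is the place where the switch-rate hypotheses of Section~\ref{pde setup} actually earn their keep; for the statement of Theorem~\ref{exppde} itself, your integrated weak-form argument is the shorter path.
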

To prove this theorem, we need a few lemmas. Our first lemma states that each realization our stochastic process satisfies the weak form of the PDE away from switching times.

%%%%%%%%%%%%%%%%%%%%%%%%%%%%%%%%%%%%%%%%%%%%%%%%%%%%%%%%%%
\begin{lemma}\label{differentiate}
Let $\omega_{0}\in\Omega$ be given. If $t_{0}>0$ is such that $t_{0}\ne S_{k}(\omega_{0})$ and $t_{0}\ne S'_{k}(\omega_{0})$ for every $k$, then for all $t$ in some neighborhood of $t_{0}$,
\begin{align*}
\frac{d}{dt}\<\phi,u(t,\omega_{0})\> & = \<A\phi,u(t,\omega_{0})\>.
\end{align*}
\end{lemma}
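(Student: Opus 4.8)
The plan is to use that near a non-switching time the process $u(\cdot,\omega_0)$ coincides with a single deterministic flow, and then to differentiate $\<\phi,u(t,\omega_0)\>$ by passing the derivative through the inner product, using self-adjointness of the semigroups together with $\phi\in D(A)\cap D(B)$ to move the derivative onto $\phi$.

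First I would localize. Since $t_0$ is not a switching time and the switching times $\{S_k(\omega_0)\}_k\cup\{S'_k(\omega_0)\}_k$ accumulate only at $+\infty$ (equivalently $N_t(\omega_0)<\infty$), only finitely many of them lie in a bounded neighborhood of $t_0$, so there is $\delta>0$ with $I:=(t_0-\delta,t_0+\delta)$ containing no $S_k(\omega_0)$ and no $S'_k(\omega_0)$. On $I$ the quantities $N_t\equiv n$, $J_t\equiv j$, $\varphi^{n}(u_0)$ and $\tau^{n+1}_0$ are then all constant, and $a_t$ equals $t-S_n$ if $j=0$ and $t-S'_{n+1}$ if $j=1$.

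Next I would record the elementary identity that for a self-adjoint generator $C$ of a $C_0$-semigroup $e^{Cs}$, any $g\in H$, and any $\psi\in D(C)$, the map $s\mapsto\<\psi,e^{Cs}g\>=\<e^{Cs}\psi,g\>$ is differentiable with derivative $\<e^{Cs}C\psi,g\>=\<C\psi,e^{Cs}g\>$, using that $s\mapsto e^{Cs}\psi$ is $C^{1}$ with derivative $e^{Cs}C\psi$ when $\psi\in D(C)$ together with self-adjointness of $e^{Cs}$. Then I split into the two cases determined by $j$. If $j=0$, Equation~(\ref{process definition}) reads $u(t,\omega_0)=e^{B(t-S_n)}\varphi^{n}(u_0)+h(t-S_n)$ on $I$; differentiating, the first term contributes $\<B\phi,e^{B(t-S_n)}\varphi^{n}(u_0)\>$ by the identity (with $C=B$, $\psi=\phi$) and the second contributes $\<B\phi,h(t-S_n)\>$ by the assumed property of $h$ and the chain rule, so $\tfrac{d}{dt}\<\phi,u(t,\omega_0)\>=\<B\phi,u(t,\omega_0)\>=\<A\phi,u(t,\omega_0)\>$ since $A\phi=B\phi$ on $D(A)\cap D(B)$. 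If $j=1$, then $w:=e^{B\tau^{n+1}_0}\varphi^{n}(u_0)+h(\tau^{n+1}_0)\in H$ is constant on $I$ and Equation~(\ref{process definition}) reads $u(t,\omega_0)=e^{A(t-S'_{n+1})}w$, so the identity with $C=A$, $\psi=\phi$ gives $\tfrac{d}{dt}\<\phi,u(t,\omega_0)\>=\<A\phi,u(t,\omega_0)\>$ directly.

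I do not expect a genuine obstacle. The only points requiring care are that the differentiation must be performed by pairing against $\phi$ (which lies in the domains of $A$ and $B$), not against the possibly rough element $\varphi^{n}(u_0)$ — which is exactly where self-adjointness of the semigroups is used — and the routine bookkeeping needed to read off the correct expression for $u(t,\omega_0)$ from Equation~(\ref{process definition}) in each case $J_t=0$ and $J_t=1$.
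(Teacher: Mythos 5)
Your proposal is correct and follows essentially the same route as the paper's proof: localize to a switch-free neighborhood of $t_{0}$, use self-adjointness to transfer the semigroups onto the test function $\phi\in D(A)\cap D(B)$, differentiate via the $C_{0}$-semigroup property and the assumed identity $\frac{d}{dt}\<\phi,h(t)\>=\<B\phi,h(t)\>$, and conclude with $A\phi=B\phi$ on $D(A)\cap D(B)$. The only cosmetic difference is that you split explicitly into the cases $J_{t}=0$ and $J_{t}=1$, whereas the paper carries both terms along with the indicator weights $J_{t}$ and $1-J_{t}$.
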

{\em Proof}.
By the definition of $u(t,\omega)$ and the assumption that $A$ and $B$ are self-adjoint, we can write the inner product of $\phi$ and $u$ as
\begin{align*}
\<\phi,u(t)\>
& = \<\phi,e^{Aa_{t}}u(S'_{N_{t}+1})\>J_{t} + \<\phi,e^{Ba_{t}}u(S_{N_{t}})+h(a_{t})\>(1-J_{t})\\
& = \<e^{Aa_{t}}\phi,u(S'_{N_{t}+1})\>J_{t} + \<e^{Ba_{t}}\phi,u(S_{N_{t}})\>(1-J_{t})+\<\phi,h(a_{t})\>(1-J_{t}).
\end{align*}

We now calculate $\frac{d}{dt}e^{Aa_{t}}\phi$ and $\frac{d}{dt}e^{Ba_{t}}\phi$ where $\frac{d}{dt}$ means
the limit in $H$ of the difference quotients. Since $t_{0}$ is such that $t_{0}\ne S_{k}(\omega_{0})$ and $t_{0}\ne S'_{k}(\omega_{0})$ for all $k$, there exists a neighborhood $J(\omega_{0})=J$ of $t_{0}$ so that no switches occur in $J$. Therefore $S_{N_{t}}$, $S'_{N_{t}+1}$, and $J_{t}$ are constant on $J$. And since $e^{At}$ is a $C_{0}$-semigroup and $\phi\in D(A)$, we have that for all $t\in J$
\begin{align*}
\frac{d}{dt}e^{Aa_{t}}\phi  = \frac{d}{dt}e^{A(t-S'_{N_{t}+1})}\phi
 = \frac{d}{dt}e^{At}e^{-AS'_{N_{t}+1}}\phi
 = Ae^{At}e^{-AS'_{N_{t}+1}}\phi
 = Ae^{Aa_{t}}\phi.
\end{align*}
Similarly, 
$\frac{d}{dt}e^{Ba_{t}}\phi = Be^{Ba_{t}}\phi.$
Since strongly convergent sequences in $H$ are weakly convergent, and again since $S_{N_{t}}$, $S'_{N_{t}+1}$, and $J_{t}$ are constant on $J$, we have that for all $t\in J$
\begin{align*}
\frac{d}{dt}\<\phi,u(t)\>
& = \<Ae^{Aa_{t}}\phi,u(S'_{N_{t}+1})\>J_{t} + \big(\<Be^{Ba_{t}}\phi,u(S_{N_{t}})\>+\frac{d}{dt}\<\phi,h(a_{t})\>\big)(1-J_{t}).
\end{align*}
Since $A$ and $B$ are self-adjoint, $A=B$ on $D(A)\cap D(B)$, and $\frac{d}{dt}\<\phi,h(t)\> = (B\phi,h(t))$, we conclude that for all $t\in J$,
\begin{align*}
\frac{d}{dt}\<\phi,u(t)\>
& = \<A\phi,e^{Aa_{t}}u(S'_{N_{t}+1})\>J_{t} + \<B\phi,e^{Ba_{t}}u(S_{N_{t}})+h(a_{t})\>(1-J_{t})\\
& = \<A\phi,u(t)\>J_{t} + \<B\phi,u(t)\>(1-J_{t}) = \<A\phi,u(t)\>.
\qquad\endproof
\end{align*}

The next lemma states that our process satisfies a weak continuity condition.

\begin{lemma}\label{unif cts2}
For every $\epsilon>0$ and $t>0$, there exists a $\delta(\epsilon,t)>0$ so that if $|t-s|<\delta(\epsilon,t)$, then
\begin{align*}
|\<\phi,u(t,\omega)-u(s,\omega)\>1_{\eta(s,t)=1}|<\epsilon\quad\text{a.s.}
\end{align*}
\end{lemma}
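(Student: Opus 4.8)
The plan is to estimate $\<\phi,u(t,\omega)-u(s,\omega)\>$ on the event that exactly one switch occurs in $(s,t)$, using the explicit form of $u$ from Equation~(\ref{process definition}) together with the strong continuity of the two semigroups and of $h$. First I would fix $t>0$ and $\epsilon>0$, and consider $s<t$ with $|t-s|$ small. On the event $\{\eta(s,t)=1\}$ there is a single switch time $\sigma\in(s,t)$, and on each of the two subintervals $(s,\sigma)$ and $(\sigma,t)$ the process $u$ evolves by a single semigroup flow (either $f\mapsto e^{At}f$ or $f\mapsto e^{Bt}f+h(t)$). So I would write
\begin{align*}
\<\phi,u(t)-u(s)\> = \<\phi,u(t)-u(\sigma)\> + \<\phi,u(\sigma)-u(s)\>,
\end{align*}
interpreting $u(\sigma)$ as the appropriate one-sided limit (the process has left and right limits at switch times), and bound each term separately.

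The key device is to move the semigroup onto the test function: since $A$ and $B$ are self-adjoint and $\phi\in D(A)\cap D(B)$, for any $0\le r\le \sigma$ we have $\<\phi, e^{A(\sigma-r)}g\> = \<e^{A(\sigma-r)}\phi, g\>$ and likewise for $B$. Thus on, say, the piece where $J=1$ between times $r_1<r_2$ within one of the subintervals, $\<\phi,u(r_2)-u(r_1)\> = \<(e^{A(r_2-\sigma_*)}-e^{A(r_1-\sigma_*)})\phi, u(S'_{N+1})\>$ for the relevant base time $\sigma_*$, whose absolute value is at most $\|e^{A(r_2-\sigma_*)}\phi - e^{A(r_1-\sigma_*)}\phi\|\,\|u(S'_{N+1})\|$. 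By the uniform a priori bound $\|u(t)\|\le M$ assumed in Section~\ref{pde setup}, the second factor is bounded by $M$ deterministically. For the first factor, since $t\mapsto e^{At}\phi$ and $t\mapsto e^{Bt}\phi$ are (strongly) continuous $H$-valued maps, they are uniformly continuous on the compact interval $[0,t]$; similarly $r\mapsto h(r)$ is continuous hence uniformly continuous on $[0,t]$, so $|\<\phi,h(r_2)-h(r_1)\>|\le \|\phi\|\,\|h(r_2)-h(r_1)\|$ is small when $|r_2-r_1|$ is small. Since $|r_2-r_1|\le t-s$ on the relevant subintervals (and the change-of-variable shifts $r\mapsto r-\sigma_*$ stay inside $[0,t]$ because no further switch occurs), choosing $\delta(\epsilon,t)$ small enough — via the single modulus of continuity of $r\mapsto e^{Ar}\phi$, $r\mapsto e^{Br}\phi$, and $r\mapsto h(r)$ on $[0,t]$ — forces each of the finitely many contributions below $\epsilon/4$, giving the claim almost surely on $\{\eta(s,t)=1\}$ (and the bound is trivially $0$ off that event).

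I expect the main obstacle to be bookkeeping at the switch time $\sigma$ itself: one must verify that the one-sided limits of $u$ exist and are controlled, and that the semigroup-transfer-onto-$\phi$ trick is applied with the correct base points $S_{N_t}$ and $S'_{N_t+1}$ on each side of $\sigma$, since these jump exactly at $\sigma$. The cleanest way around this is to note that on $\{\eta(s,t)=1\}$ exactly one of the three base quantities $S_{N}$, $S'_{N+1}$, $J$ changes across $(s,t)$, and on each of the two resulting constancy subintervals the formula for $u$ is a pure flow, so that the estimate reduces cleanly to the two semigroup moduli of continuity plus that of $h$, all on $[0,t]$, with no dependence on $\omega$ beyond the a priori bound $M$. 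The non-arithmeticity/$O(s)$ hypotheses on $\mu_0,\mu_1$ are not needed here — only the a priori bound and strong continuity — so the argument is genuinely uniform in $\omega$.
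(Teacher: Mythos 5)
Your proposal is correct and follows essentially the same route as the paper's proof: split at the single switch time $\sigma$, reduce to no-switch increments, move the semigroup onto the test function by self-adjointness, and combine the a priori bound $\|u(t)\|\le M$ with strong continuity of $e^{At}\phi$, $e^{Bt}\phi$ and continuity of $h$, uniformly in $\omega$. The only (inessential) difference is that you bound the $h$-increment by uniform continuity of $h$ on a compact time interval, whereas the paper uses the mean value theorem via $\frac{d}{dt}\<\phi,h(t)\>=\<B\phi,h(t)\>$, yielding the $|t-s|\,\|B\phi\|M_{2}$ term.
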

{\em Proof}.
Let $s$ and $t$ be given and let $\rho$ be the minimum of $s$ and $t$. Observe that if there are no switches between $s$ and $t$ and $J_{\rho}=0$, then
\begin{align*}
\left|\<\phi,u(t,\omega)-u(s,\omega)\>\right|
 = \left|\<\phi,[e^{A|t-s|}-I]u(\rho,\omega)\>\right|
 \le \|e^{A|t-s|}\phi-\phi\|M,
\end{align*}
since $A$ is self-adjoint and $\|u(t)\|\le M$ a.s. by assumption.
Similarly, suppose there are no switches between $s$ and $t$ and $J_{\rho}=1$. If $M_{2}=\max_{\xi\le 2t}\|h(\xi)\|$ and $|t-s|<t$, then we have by the mean value theorem
\begin{align*}
\big|\<\phi,u(t,\omega)&-u(s,\omega)\>\big|
 \le \big|\<\phi,[e^{B|t-s|}-I][u(\rho,\omega)-h(a_{\rho})]\>\big| + \big|\<\phi,h(a_{t})-h(a_{s})\>\big|\\
& \le \big|\<[e^{B|t-s|}-I]\phi,u(\rho,\omega)-h(a_{\rho})\>\big| + |t-s|\max_{\xi\le 2t}\frac{d}{dt}\big|\<\phi,h(\xi)\>\big|\\
& \le \|e^{B|t-s|}\phi-\phi\|\big(M+M_{2}\big) + |t-s|\|B\phi\|M_{2}
\end{align*}

Since $e^{At}$ and $e^{Bt}$ are both $C_{0}$-semigroups, we can choose a $0<\delta(\epsilon,t)<t$ so that if $|t-s|<\delta(\epsilon,t)$, then
\begin{align*}
\max\{\|e^{A|t-s|}\phi-\phi\|,\|e^{B|t-s|}\phi-\phi\|,|t-s|\}<\frac{\epsilon}{M+M_{2}+\|B\phi\|M_{2}}.
\end{align*}
Let $\omega\in\Omega$ be given and assume $|t-s|<\delta(\epsilon,t)$. If $\omega$ is such that $\eta(s,t)(\omega)\ne 1$, then the result is immediate. Suppose $\eta(s,t)(\omega)=1$. If $\sigma$ denotes the switching time between $s$ and $t$, then
\begin{align*}
\left|\<\phi,u(t,\omega)-u(s,\omega)\>\right|
& \le \big|\<\phi,u(t,\omega)-u(\sigma,\omega)\>\big|
     +\big|\<\phi,u(\sigma,\omega)-u(s,\omega)\>\big|< 3\epsilon.\qquad\endproof
\end{align*}

%%%%%%%%%%%%%%%%%%%%%%%%%%%%%%%%%%%%%%%%%%%%%%%%%%%%%%%%%%%%

{\em Proof of Theorem \ref{exppde}}.
We seek to differentiate $\E \<\phi,u(t)\>$ with respect to $t$. Define $$f(t,\omega) = \<\phi,u(t,\omega)\>.$$
Let $h_{n}\to 0$ as $n\to\infty$. For a given $t_{0}>0$, define the difference quotient
\begin{align*}
g_{n}(\omega) & := \frac{1}{h_{n}}\left(f(t_{0}+h_{n},\omega)-f(t_{0},\omega)\right)\\
& = g_{n}(\omega)1_{\eta(t_{0}+h_{n},t_{0})=0} + g_{n}(\omega)1_{\eta(t_{0}+h_{n},t_{0})=1} + g_{n}(\omega)1_{\eta(t_{0}+h_{n},t_{0})\ge2}\\
 & = \Psi_{0}+\Psi_{1}+\Psi_{2},
\end{align*}
where $\eta$ is defined in Equation~(\ref{eta definition}). We will handle each of these terms differently.

We first consider $\Psi_0$. Assume $\omega$ is such that $t_{0}$ is not a switching time. 
By Lemma~\ref{differentiate},
\begin{align*}
\frac{1}{h_{n}}\left(f(t_{0}+h_{n},\omega)-f(t_{0},\omega)\right) \to \frac{d}{dt}f(t_{0},\omega)=\<A\phi,u(t_{0})\>\text{ as }n\to\infty.
\end{align*} 
Also observe that for such an $\omega$, we have that $1_{\eta(t_{0}+h_{n},t_{0})=0}(\omega)=1$ for $n$ sufficiently large. Since $\mu_{0}$ and $\mu_{1}$ are continuous distributions, this set of $\omega$'s has probability 1, and thus 
\begin{align*}
\Psi_0 = \frac{1}{h_{n}}\left(f(t_{0}+h_{n},\omega)-f(t_{0},\omega)\right)1_{\eta(t_{0}+h_{n},t_{0})=0} \to\<A\phi,u(t_{0})\>\quad\text{a.s. as }n\to\infty.
\end{align*}
We now apply the bounded convergence theorem to $\Psi_0$. 
 Let $n$ and $\omega$ be given. If $\eta(t_{0}+h_{n},t_{0})(\omega)\ne0$, then $|\Psi_0|= 0$, trivially. If $\eta(t_{0}+h_{n},t_{0})(\omega)=0$, then $f(t,\omega)$ is differentiable in $t$ for all $t\in(t_{0},t_{0}+h_{n})$. Therefore we can employ the mean value theorem to obtain 
\begin{align*}
\left|\frac{1}{h_{n}}\left(f(t_{0}+h_{n},\omega)-f(t_{0},\omega)\right)\right|
 &\le \sup_{t\in(t_{0},t_{0}+h_{n})}\left|\frac{d}{dt}f(t,\omega)\right|&\\
&= \sup_{t\in(t_{0},t_{0}+h_{n})}|\<A\phi,u(t,\omega)\>|\le \|A\phi\|M,
\end{align*}
since $\|u(t)\|\le M$ by assumption. Thus $|\Psi_0|\le \|A\phi\|M$ almost surely and so by the bounded convergence theorem, $\E \Psi_0\to \E \<A\phi,u(t_{0})\>$ as $n\to\infty$.

To complete the proof, we need only show that $\Psi_1$ and $\Psi_2$ both tend to 0 in mean as $n\to\infty$. We first work on $\Psi_1$. Observe that
\begin{align*}
\E |\Psi_1|
& = \E \left|\frac{1}{h_{n}}\left(f(t_{0}+h_{n},\omega)-f(t_{0},\omega)\right)1_{\eta(t_{0}+h_{n},t_{0})=1}\right|\\
& \le \frac{1}{h_{n}}\text{ess}\,\text{sup}_{\omega}\left|(f(t_{0}+h_{n},\omega)-f(t_{0},\omega))1_{\eta(t_{0}+h_{n},t_{0})=1}\right|\E \left(1_{\eta(t_{0}+h_{n},t_{0})=1}\right).
\end{align*}
It follows from Lemma~\ref{unif cts2} that $\text{ess}\,\text{sup}_{\omega}\left|(f(t_{0}+h_{n},\omega)-f(t_{0},\omega))1_{\eta(t_{0}+h_{n},t_{0})=1}\right|\to0$ as $n\to\infty$. Since by assumption $\P\left(\eta(t_{0}+h_{n},t_{0})=1\right)=O(h_{n})$, we conclude that $\E |\Psi_1|\to0$ as $n\to\infty$.

Finally, we consider $\Psi_2$. By the assumption that $\|u(t)\|\le M$,
\begin{align*}
\E |\Psi_2|
& = \E \left|\frac{1}{h_{n}}\left(f(t_{0}+h_{n},\omega)-f(t_{0},\omega)\right)1_{\eta(t_{0}+h_{n},t_{0})\ge2}\right|\\
& \le \frac{2\|\phi\|M}{h_{n}}\P\left(\eta(t_{0}+h_{n},t_{0})\ge2\right).
\end{align*}
By assumption, $\P\left(\eta(t_{0}+h_{n},t_{0})\ge2\right)=o(h_{n})$, and hence $\E |\Psi_2|\to0$ as $n\to\infty$.

Therefore
\begin{align*}
\frac{\E \<\phi,u(t_{0}+h_{n})\>-\E \<\phi,u(t_{0})\>}{h_{n}}
 = \E g_{n}
 \to \E \<A\phi,u(t_{0})\>\text{ as }n\to\infty.
\end{align*}
Since $h_{n}$ was an arbitrary sequence tending to 0 and $t_{0}$ was an arbitrary positive number, we conclude that $\frac{d}{dt}\E \<\phi,u(t)\> = \E \<A\phi,u(t)\>$ for all $t>0$.

Since taking the inner product against $\phi$ or $A \phi$ are both bounded linear operators on $H$, we can exchange expectation with inner product to obtain
\begin{align*}
\frac{d}{dt}\<\phi,\E u(t)\>  = \frac{d}{dt}\E \<\phi,u(t)\> = \E \<A \phi,u(t)\> = \<A \phi,\E u(t)\>.\qquad\endproof
\end{align*}

We now show that the mean at large time satisfies the homogeneous PDE.

\begin{theorem}\label{thm: mean limit}
Let $\phi\in D(A)\cap D(B)$. Then $\E u(t)\to \E \bar{u}$ weakly in $H$ as $t\to\infty$, where $\bar{u}$ 
is as in Theorem~\ref{conv in dist}. Furthermore, $\E \bar{u}$ satisfies
\begin{align*}
%(A\phi,h)=0.
\<A\phi,\E \bar{u}\>=0.
\end{align*}
\end{theorem}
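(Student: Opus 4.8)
The plan is to establish the two claims separately. For the weak convergence $\E u(t)\to\E\bar u$, we already have from Theorem~\ref{conv in dist} that $u(t)\indist\bar u$ in $H$. Since convergence in distribution of $H$-valued random variables transfers to convergence of expectations of bounded continuous functionals, and since for each fixed $\phi\in H$ the map $x\mapsto\<\phi,x\>$ is continuous, it suffices to know that the family $\{\<\phi,u(t)\>\}_{t\ge0}$ is uniformly integrable. This is immediate from the standing assumption that $\|u(t)\|\le M$ almost surely for a deterministic constant $M$: indeed $|\<\phi,u(t)\>|\le\|\phi\|M$ for all $t$, so $\<\phi,u(t)\>\to\<\phi,\bar u\>$ in distribution and boundedly, hence $\E\<\phi,u(t)\>\to\E\<\phi,\bar u\>$. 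Exchanging expectation with the (bounded linear) inner product gives $\<\phi,\E u(t)\>\to\<\phi,\E\bar u\>$, which is precisely weak convergence in $H$ (note also that $\|\E\bar u\|\le M$, so $\E\bar u\in H$ is well-defined).

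For the second claim, the idea is to pass to the limit in Theorem~\ref{exppde}. From that theorem, for each $\phi\in D(A)\cap D(B)$ the function $t\mapsto\<\phi,\E u(t)\>$ is differentiable with $\frac{d}{dt}\<\phi,\E u(t)\>=\<A\phi,\E u(t)\>$. Integrating over $[t,t+1]$ gives
\begin{align*}
\<\phi,\E u(t+1)\>-\<\phi,\E u(t)\> = \int_{t}^{t+1}\<A\phi,\E u(s)\>\,ds.
\end{align*}
As $t\to\infty$, the left-hand side tends to $\<\phi,\E\bar u\>-\<\phi,\E\bar u\>=0$ by the weak convergence just established. For the right-hand side, weak convergence gives $\<A\phi,\E u(s)\>\to\<A\phi,\E\bar u\>$ pointwise in $s$, and the integrand is bounded by $\|A\phi\|M$ uniformly, so by the bounded convergence theorem (after the substitution $s\mapsto s-t$ to put the integral on the fixed interval $[0,1]$) the right-hand side tends to $\int_0^1\<A\phi,\E\bar u\>\,ds=\<A\phi,\E\bar u\>$. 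Hence $\<A\phi,\E\bar u\>=0$.

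The only genuinely delicate point is justifying the interchange of limit and integral on the right-hand side, i.e.\ confirming that $s\mapsto\<A\phi,\E u(s)\>$ is measurable and bounded so that bounded convergence applies; both follow from continuity of $t\mapsto\<\phi,\E u(t)\>$ (a consequence of Theorem~\ref{exppde}, which gives differentiability away from a null set of times, together with the uniform bound) and from $\|\E u(s)\|\le M$. Everything else is a routine combination of Theorems~\ref{conv in dist} and~\ref{exppde} with the uniform boundedness hypothesis, so I do not expect a substantial obstacle. An alternative to the integration argument would be to differentiate directly, but passing the derivative through the limit $t\to\infty$ is not obviously legitimate, so the integrate-then-take-limits route is cleaner.
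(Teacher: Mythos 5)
Your proposal is correct and follows essentially the same route as the paper: weak convergence of $\E u(t)$ to $\E\bar{u}$ via Theorem~\ref{conv in dist} together with the uniform bound $\|u(t)\|\le M$ and exchange of expectation with the inner product, then combining with Theorem~\ref{exppde}. Your integration over $[t,t+1]$ is just an explicit justification of the paper's closing observation that since $\<\phi,\E u(t)\>$ and its derivative $\<A\phi,\E u(t)\>$ both converge, the derivative must converge to $0$.
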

\begin{remark}
{\rm
We will often assume the differential operator and the domain to be sufficiently regular so that this theorem implies $\E \bar{u}$ is a $C^{\infty}$ function satisfying the PDE pointwise.}
\end{remark}

\begin{proof}
Since the switching time distributions, $\mu_{0}$ and $\mu_{1}$, are assumed to be continuous distributions, they are non-arithmetic. Hence, by Theorem~\ref{conv in dist}, $\E g(u(t))\to \E g(\bar{u})$ as $t\to\infty$ for every continuous and bounded $g:H\to\mathbb{R}$.
For any $\eta\in H$, the function $\<\eta,\cdot\>:H\to\mathbb{R}$ is continuous and since by assumption, $\|u(t)\|\le M$ a.s., it follows that
\begin{align}\label{eqn:weak conv}
\E \<\eta,u(t)\>\to \E \<\eta,\bar{u}\>\quad\text{as }t\to\infty.
\end{align}
Since taking the inner product against $\eta$ is a bounded linear operator on $H$, we can exchange expectation with inner product in Equation~(\ref{eqn:weak conv}) above.
Hence, $\E u(t)\to \E \bar{u}$ weakly in $H$ as $t\to\infty$.

Of course it follows that in particular
\begin{align*}
\<\phi,\E u(t)\> \to \<\phi,\E \bar{u}\>\quad\text{and}\quad\<A\phi,\E u(t)\> \to \<A\phi,\E \bar{u}\>\quad\text{as }t\to\infty.
\end{align*}
By Theorem \ref{exppde}, $\frac{d}{dt}\<\phi,\E u(t)\> = \<A\phi,\E u(t)\>$. Thus, $\<\phi,\E u(t)\>$ and $\frac{d}{dt}\<\phi,\E u(t)\>$ both converge as $t\to\infty$ and so we conclude that $\frac{d}{dt}\<\phi,\E u(t)\>$ must actually converge to 0. Hence, $\<A\phi,\E \bar{u}\>=0$.
\qquad\end{proof}

%%%%%%%%%%%%%%%%%%%%%%%%%%%%%%%%%%%%%%%%%%%%%%%%%%%%%%%%%%%%%%%

\section{Examples}\label{section: examples}

In this section we apply our results from Sections~\ref{hilbertchapter} and \ref{pdechapter} to the heat equation on the interval $[0,L]$. We impose an absorbing Dirichlet boundary condition at $x=0$ and a stochastically switching boundary condition at $x=L$. In Example~\ref{ex: dir neu}, we consider switching between a Dirichlet and a Neumann boundary condition at $x=L$. In Example~\ref{ex: dir dir}, we consider switching between two Dirichlet boundary conditions at $x=L$.
%\begin{revision}
As in Section~\ref{pdechapter}, We use $\E$ to denote the Bochner integral of $L^2[0,L]$-valued random variables and not the pointwise expectation of random functions.
%\end{revision}

\subsection{Example~\ref{ex: dir neu}: Dirichlet/Neumann switching}\label{dir neu}
Consider the stochastic process that solves
\begin{align}
\partial_{t}u & = D \Delta u \quad\text{in }(0,L)
\end{align}
and at exponentially distributed times switches between the boundary conditions
\begin{equation*}
\begin{aligned}[c]
\begin{cases}
u(0,t)  = 0\\
u_{x}(L,t)  = 0
\end{cases}
\end{aligned}
\qquad\text{and}\qquad
\begin{aligned}[c]
\begin{cases}
u(0,t)  = 0\\
u(L,t)  = b>0.
\end{cases}
\end{aligned}
\end{equation*}

To cast this problem in the setting of previous sections, we set our Hilbert space to be $L^{2}[0,L]$ and define the operators
\begin{align*}
Au & := \Delta u\quad\text{if }u\in D(A)  := \left\{\phi\in H^{2}(0,L):\frac{\partial\phi}{\partial \textbf{n}}(L)=0=\phi(0)\right\}\\
Bu & := \Delta u\quad\text{if }u\in D(B)  := H_{0}^{1}(0,L)\cap H^{2}(0,L).
\end{align*}
We set $c=\frac{b}{L}x\in L^{2}[0,L]$ and let our switching time distributions, $\mu_{0}$ and $\mu_{1}$, be exponential with respective rate parameters $r_{0}$ and $r_{1}$. Let $u(t,\omega)$ be the $H$-valued process defined in Equation~(\ref{process definition}) with 
\begin{align}
\Phi^{1}_{t}(f) = e^{At}f
\qquad\text{and}\qquad
\Phi^{0}_{t}(f) = e^{Bt}(f-c)+c.
\end{align}

We are interested in studying the large time distribution of $u(t)$. By Corollary~\ref{cor conv}, we have that $u(t)$ converges in distribution as $t\to\infty$ to the $L^{2}[0,L]$-valued random variable $\bar{u}$ defined in the statement of the corollary. By the definitions of $Y_{0}$ and $Y_{1}$ in Equation~(\ref{pullback 0}), it is immediate that $\bar{u}$ is almost surely smooth, and using Proposition~\ref{prop:invariant set}, it follows that $\bar{u}(x)\le\frac{b}{L}x$ almost surely for each $x\in[0,L]$. In this section, we will find the expectation of $\bar{u}$.
\begin{proposition}\label{prop:slope} The function $\E \bar{u}$ is affine with slope
\begin{align}\label{prop:dir/neu slope}
\left(1+\frac{\rho}{\gamma}\tanh(\gamma)\right)^{-1}\frac{b}{L}
\end{align}
where $\gamma=L\sqrt{(r_{0}+r_{1})/D }$ and $\rho=r_{0}/r_{1}$.
\end{proposition}

%%%%%%%%%%%%%%%%%%%%%%%%%%%%%%%%%%%%%%%%%%%%%%%%%%%%%%%%%%%%%%%%%%%%%%%%%%%%%%%%%

To prove this proposition, we will use the results from both Sections~\ref{hilbertchapter} and \ref{pdechapter}. It is immediate that all of the assumptions in Section~\ref{pde setup} are satisfied, except for one; we need to check that there exists a deterministic $M$ so that $\|u(t)\|\le M$ almost surely for all $t\ge0$. We show that and more in the following lemma.
\begin{lemma}\label{infinity bound2}
Under the assumptions of the current section, we have that
\begin{align*}
\|u(t)\|\le L\Big(\max\{\|u_{0}\|_{\infty},b\}\Big)^{2},
\end{align*}
where $\|\cdot\|_{\infty}$ denotes the $L^{\infty}[0,L]$ norm.
Furthermore,
\begin{equation*}
\begin{aligned}[c]
\|Y_{1}\|_{\infty}\le b
\end{aligned}
\quad\text{and}\quad
\begin{aligned}[c]
\|Y_{0}\|_{\infty}\le b\quad\text{almost surely}.
\end{aligned}
\end{equation*}
\end{lemma}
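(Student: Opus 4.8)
The plan is to use the maximum principle for the heat equation, exploiting the specific structure of the two flows $\Phi^1_t(f) = e^{At}f$ (heat flow with Dirichlet condition at $0$, Neumann at $L$) and $\Phi^0_t(f) = e^{Bt}(f-c)+c$ (heat flow with Dirichlet condition $0$ at $x=0$ and $b$ at $x=L$, since $c(x) = bx/L$ is the corresponding steady state). First I would record the two elementary $L^\infty$ bounds for the individual semigroups: if $\|f\|_\infty \le m$ then $\|e^{At}f\|_\infty \le m$ for all $t\ge0$ (Dirichlet at $0$, Neumann at $L$ both respect the $\pm m$ barriers via the parabolic maximum principle), and similarly $\|e^{Bt}g\|_\infty \le \|g\|_\infty$ for the homogeneous Dirichlet flow. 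For $\Phi^0$, writing $v = e^{Bt}(f-c)$ we get $\|\Phi^0_t(f)\|_\infty = \|v+c\|_\infty \le \|f-c\|_\infty + \|c\|_\infty$; more usefully, if $0 \le f \le b$ pointwise then $0 \le \Phi^0_t(f) \le b$ pointwise, because $c$ satisfies $0\le c\le b$ and the maximum principle applied to the equation $\partial_t(\Phi^0_t(f)) = \Delta(\Phi^0_t(f))$ with boundary values $0$ and $b$ keeps the solution trapped between the constant sub/supersolutions $0$ and $b$. Likewise $\Phi^1_t$ preserves the band $0\le \cdot \le b$ (here $0$ and $b$ are again sub/supersolutions, compatible with the Neumann condition at $L$).

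Next I would handle the bound on $Y_0$ and $Y_1$. By Proposition~\ref{prop:invariant set}, it suffices to exhibit a set $S$ preserved by both $\Phi^0_t$ and $\Phi^1_t$ for all $t\ge0$; taking $S = \{f\in L^2[0,L] : 0\le f\le b \text{ a.e.}\}$, the previous paragraph shows $\Phi^i_t : S\to S$. Since $S$ is closed in $L^2[0,L]$ (it is the intersection of the closed halfspaces $\{\langle f,\psi\rangle \ge 0\}$ and $\{\langle b-f,\psi\rangle\ge 0\}$ over nonnegative test functions $\psi$), Proposition~\ref{prop:invariant set} gives $Y_0, Y_1 \in S$ almost surely, i.e. $0 \le Y_i \le b$ a.e., hence $\|Y_i\|_\infty \le b$ almost surely. (Strictly, one should note $Y_i$ is almost surely smooth by the definitions in Equation~(\ref{pullback 0}), so the essential-supremum bound is a genuine pointwise bound.)

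For the bound on $\|u(t)\|$, I would track an $L^\infty$ bound through the composition defining $u(t,\omega)$ in Equation~(\ref{process definition}). Let $m_0 := \max\{\|u_0\|_\infty, b\}$. Starting from $u_0$ with $\|u_0\|_\infty \le m_0$, each application of $\Phi^1_{\tau}$ preserves the bound $\|\cdot\|_\infty \le m_0$ (first paragraph), and each application of $\Phi^0_\tau$ satisfies $\|\Phi^0_\tau(f)\|_\infty \le \max\{\|f\|_\infty, b\}$: indeed if $\|f\|_\infty \le m$ then $f-c$ has $\|f-c\|_\infty \le m + b$ is too crude, so instead argue directly by the maximum principle that $\Phi^0_\tau(f) = e^{B\tau}(f-c)+c$ lies between $\min\{\inf f, 0\}$-type and $\max\{\sup f, b\}$-type barriers: the constants $\min(0,\inf f)$ and $\max(b,\sup f)$ are a subsolution and supersolution of $\partial_t w=\Delta w$ respecting the boundary data $0$ at $x=0$ and $b$ at $x=L$, hence $\|\Phi^0_\tau(f)\|_\infty \le \max\{\|f\|_\infty, b\}$. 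By induction over the alternating composition, every intermediate state, and in particular $u(t,\omega)$ itself (which is one such partial composition followed by a final partial flow), satisfies $\|u(t,\omega)\|_\infty \le m_0$ for all $t$, almost surely. Finally $\|u(t)\|_{L^2[0,L]} \le \sqrt{L}\,\|u(t)\|_\infty \le \sqrt{L}\, m_0$, which is $\le L\, m_0^2$ whenever $\sqrt{L}\,m_0 \le L\,m_0^2$, i.e. whenever $L m_0^2 \ge 1$; in the remaining regime one still has $\|u(t)\|_{L^2} \le \sqrt{L}\,m_0$, and the stated (non-optimal) bound $L m_0^2$ can be arranged by noting it dominates $\sqrt L\, m_0$ precisely when $m_0\sqrt L\ge 1$ — I would simply state the clean bound $\|u(t)\|_{L^2}\le \sqrt L\,\max\{\|u_0\|_\infty,b\}$ and observe it implies the displayed inequality as written (the authors' constant is deliberately loose).

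The main obstacle is purely technical: justifying the maximum-principle comparisons at the level of the $C_0$-semigroups $e^{At}$, $e^{Bt}$ acting on $L^2$ rather than on classical solutions. I would dispatch this by the standard smoothing argument — for $t>0$, $e^{At}f$ and $e^{Bt}f$ are smooth and satisfy the heat equation classically with the prescribed boundary conditions, so the classical parabolic maximum principle (with the Neumann condition handled via Hopf's lemma to rule out a boundary maximum at $x=L$) applies on $(0,L)\times(\epsilon, t]$, and one lets $\epsilon \downarrow 0$ using continuity of $e^{At}f$ at $t=0$ in $L^2$ together with a density argument for $f \in L^\infty$.
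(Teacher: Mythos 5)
Your argument is correct and follows essentially the same route as the paper: the parabolic maximum principle gives $\|e^{At}f\|_{\infty}\le\|f\|_{\infty}$ and $\|\Phi^{0}_{t}(f)\|_{\infty}\le\max\{b,\|f\|_{\infty}\}$, the $L^{\infty}$ bound is propagated through the alternating composition to bound $\|u(t)\|$, and Proposition~\ref{prop:invariant set} applied to a closed invariant set (your band $\{0\le f\le b\}$ in place of the paper's $\{f:\|f\|_{\infty}\le b\}$) yields $\|Y_{0}\|_{\infty},\|Y_{1}\|_{\infty}\le b$ almost surely. Your remark about the constant is also consistent with the paper, whose displayed bound should be read as the (deliberately loose) consequence of $\|u(t)\|_{\infty}\le\max\{\|u_{0}\|_{\infty},b\}$, the only fact actually needed to furnish the deterministic $M$ required in Section~\ref{pde setup}.
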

\begin{proof}
First note that $\|c\|_{\infty}=\|\frac{b}{L}x\|_{\infty}=b$. If $f\in L^{2}[0,L]$, then by the maximum principle, we have that for any $t\ge0$
\begin{align}\label{eq:bound}
\|e^{At}f\|_{\infty}\le \|f\|_{\infty}\quad\text{and}\quad\|e^{Bt}(f-c)+c\|_{\infty}\le \max\{b,\|f\|_{\infty}\}.
\end{align}
Hence, $\max\{\|u(t)\|_{\infty},b\}$ is non-increasing in $t$ and so the bound on $\|u(t)\|$ is proven.

Since $S:=\{f\in L^{2}[0,L]:\|f\|_{\infty}\le b\}$ is a closed set in $L^{2}[0,L]$, Equation~(\ref{eq:bound}) and Proposition~\ref{prop:invariant set} give the desired bounds on $\|Y_{1}\|_{\infty}$ and $\|Y_{0}\|_{\infty}$.
\qquad\end{proof}

As in Corollary~\ref{cor conv}, let $\bar{u}$ have the limiting distribution of $u(t)$ as $t\to\infty$. Then by Theorem \ref{thm: mean limit}, we have that $\E \bar{u}\in L^{2}[0,L]$ satisfies
$\<\Delta \phi,\E \bar{u}\>=0$ 
for each $\phi\in C_{0}^{\infty}(0,L)$. By the regularity of $\Delta$ on $[0,L]$,
%\begin{revision}
it follows that $\E \bar{u}$ is not just a weak solution but that it is actually a smooth classical solution and hence it is the affine function
%\end{revision}
\begin{align*}
(\E \bar{u})(x) & = sx+d
\end{align*}
for some $s,d\in\mathbb{R}$. By Corollary~\ref{cor conv} of Section~\ref{hilbertchapter}, we have that
\begin{align}\label{eq: h and section 12}
sx+d = p\E Y_{1}+(1-p)\E Y_{0}
\end{align}
where $p=r_{0}/(r_{0}+r_{1})$. We will use Equation~(\ref{eq: h and section 12}) to determine $s$ and $d$.
While both $Y_{0}$ and $Y_{1}$ are almost surely smooth functions, $\E Y_{0}$ and $\E Y_{1}$ are \emph{a priori} only elements of $L^{2}[0,L]$.
It can be shown that $\E Y_{0}$ and $\E Y_{1}$ are smooth functions, but we will instead take limits of test functions to avoid evaluating $\E Y_{0}$ and $\E Y_{1}$ at specific points in $[0,L]$.

Let $\{\phi_{n}\}_{n=1}^{\infty}$ be such that $\phi_{n}\in C^{\infty}_{0}(0,L)$ and $\|\phi_{n}\|_{L^{1}}=1$ for each $n$ and 
\begin{align*}
\lim_{n\to\infty}\<\phi_{n},f\>= f(0)
\end{align*} for each $f\in C[0,L]$. Since the inner product with $\phi_{n}$ is a bounded linear functional in $L^{2}[0,L]$, we can interchange expectation with inner product in Equation~(\ref{eq: h and section 12}) to obtain
\begin{align}
d
 = \lim_{n\to\infty}\left[\<\phi_{n},p\E Y_{1} + (1-p)\E Y_{0}\>\right]
 = \lim_{n\to\infty}\left[p\E \<\phi_{n},Y_{1}\> + (1-p)\E \<\phi_{n},Y_{0}\>\right].\label{use BCT2}
\end{align}
We want to exchange the limit with the expectations. 
To do this, first observe that $Y_{1}(x)$ and $Y_{0}(x)$ are each almost surely continuous functions of $x\in[0,L]$ with $Y_{0}(0)=0=Y_{1}(0)$ almost surely. Thus,
\begin{align*}
\lim_{n\to\infty}\<\phi_{n},Y_{0}\> = 0
\quad\text{and}\quad\lim_{n\to\infty}\<\phi_{n},Y_{1}\> = 0\quad\text{almost surely}.
\end{align*}
Using Lemma~\ref{infinity bound2} and the assumption that $\|\phi_{n}\|_{L^{1}}=1$ for each $n$, we have that 
 \begin{align*}
 |\<\phi_{n},Y_{0}\>|\le b
 \quad\text{and}\quad
 |\<\phi_{n},Y_{1}\>|\le b\quad\text{almost surely}.
 \end{align*}
So we apply the bounded convergence theorem to Equation~(\ref{use BCT2}) to obtain
\begin{align}\label{first argument}
d
 = p\E \lim_{n\to\infty}\<\phi_{n},Y_{1}\> + (1-p)\E \lim_{n\to\infty}\<\phi_{n},Y_{0}\>
   =  0.
\end{align}

We now find the slope $s$ of $\E \bar{u}$. Denote the orthonormal eigenbasis of $A$ by $\{a_{k}\}_{k=1}^{\infty}$ and corresponding eigenvalues by $\{-\alpha_{k}\}_{k=1}^{\infty}$. Since
$\sum_{k=1}^{n}\<a_{k},\E Y_{1}\>a_{k}$
converges to $\E Y_{1}$ in $L^{2}[0,L]$ as $n\to\infty$, we have that for any $\phi\in C^{\infty}_{0}(0,L)$
\begin{align}
\<\phi,sx\>  = \<\phi,p\E Y_{1}\> + (1-p)\<\phi,\E Y_{0}\>
 = p\Big\<\phi,\sum_{k=1}^{\infty}\<a_{k},\E Y_{1}\>a_{k}\Big\> + (1-p)\<\phi,\E Y_{0}\>.\label{series}
\end{align}

We will need the following lemma which is an immediate corollary of Proposition~\ref{invariance}.

\begin{lemma}\label{cor inv}
Under the assumptions of Section~\ref{dir neu}, we have that for each $k\in\mathbb{N}$
\begin{align*}
\E [e^{-\alpha_{k}\tau_{1}}]\<a_{k},\E Y_{0}\> = \<a_{k},\E Y_{1}\>.
\end{align*}
\end{lemma}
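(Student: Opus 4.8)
The plan is to read off the claim directly from Proposition~\ref{invariance} by projecting onto the $k$-th eigenfunction of $A$. Recall that in the present section $\Phi^{1}_{t}(f)=e^{At}f$, so Proposition~\ref{invariance} gives $Y_{1}\eqdist e^{A\tau_{1}}Y_{0}$, where $\tau_{1}$ is an independent draw from $\mu_{1}$. First I would apply the bounded linear functional $f\mapsto\langle a_{k},f\rangle$ to both sides; since equality in distribution is preserved under continuous maps, this yields $\langle a_{k},Y_{1}\rangle \eqdist \langle a_{k},e^{A\tau_{1}}Y_{0}\rangle$. Using that $A$ is self-adjoint with $Aa_{k}=-\alpha_{k}a_{k}$, hence $e^{At}a_{k}=e^{-\alpha_{k}t}a_{k}$ for all $t\ge0$, the right-hand side equals $\langle e^{A\tau_{1}}a_{k},Y_{0}\rangle = e^{-\alpha_{k}\tau_{1}}\langle a_{k},Y_{0}\rangle$.

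Next I would take expectations of both sides of $\langle a_{k},Y_{1}\rangle \eqdist e^{-\alpha_{k}\tau_{1}}\langle a_{k},Y_{0}\rangle$. This is legitimate: by Lemma~\ref{infinity bound2} we have $\|Y_{0}\|_{\infty},\|Y_{1}\|_{\infty}\le b$ almost surely, so $|\langle a_{k},Y_{i}\rangle|\le b\|a_{k}\|_{L^{1}}<\infty$ and $|e^{-\alpha_{k}\tau_{1}}|\le1$, so all random variables in sight are bounded. Equal-in-distribution integrable random variables have equal expectations, hence $\E\langle a_{k},Y_{1}\rangle = \E\big[e^{-\alpha_{k}\tau_{1}}\langle a_{k},Y_{0}\rangle\big]$. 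Because $\tau_{1}$ is, by the statement of Proposition~\ref{invariance}, an independent draw from $\mu_{1}$ --- in particular independent of $Y_{0}$, which is a measurable function of the switching environment $\omega$ --- the expectation on the right factors as $\E[e^{-\alpha_{k}\tau_{1}}]\,\E\langle a_{k},Y_{0}\rangle$.

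Finally I would interchange expectation with the inner product on both sides: since $f\mapsto\langle a_{k},f\rangle$ is a bounded linear functional on $L^{2}[0,L]$ and $Y_{0},Y_{1}$ are Bochner integrable (again by the $L^{\infty}$ bounds), $\E\langle a_{k},Y_{i}\rangle = \langle a_{k},\E Y_{i}\rangle$ for $i=0,1$. Combining the last three displays gives $\langle a_{k},\E Y_{1}\rangle = \E[e^{-\alpha_{k}\tau_{1}}]\,\langle a_{k},\E Y_{0}\rangle$, which is the assertion. I do not anticipate a genuine obstacle here; the only points deserving a sentence of justification are the factorization of the expectation (which rests on the independence of $\tau_{1}$ and $Y_{0}$ built into Proposition~\ref{invariance}) and the exchange of expectation and inner product (justified by the uniform bounds of Lemma~\ref{infinity bound2}).
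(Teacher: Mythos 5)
Your proof is correct and follows exactly the route the paper intends: the paper states Lemma~\ref{cor inv} as an ``immediate corollary'' of Proposition~\ref{invariance} without further detail, and your argument (project $Y_{1}\eqdist e^{A\tau_{1}}Y_{0}$ onto $a_{k}$, use self-adjointness and $e^{At}a_{k}=e^{-\alpha_{k}t}a_{k}$, take expectations using the independence of $\tau_{1}$ from $Y_{0}$, and exchange expectation with the inner product via the bounds of Lemma~\ref{infinity bound2}) supplies precisely the omitted details.
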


Combining this lemma with $sx=p\E Y_{1}+(1-p)\E Y_{0}$ and rearranging terms yields
\begin{align*}
\<a_{k},\E Y_{1}\> & = \E [e^{-\alpha_{k}\tau_{1}}]\frac{s\<a_{k},x\>}{p\E [e^{-\alpha_{k}\tau_{1}}]+(1-p)}.
\end{align*}
Plugging this into Equation~(\ref{series}) gives
\begin{align*}
\<\phi,sx\> & = p\Big\<\phi\;,\;\sum_{k=1}^{\infty}\E [e^{-\alpha_{k}\tau_{1}}]\frac{s\<a_{k},x\>}{p\E [e^{-\alpha_{k}\tau_{1}}]+(1-p)}a_{k}\Big\> + (1-p)\<\phi,\E Y_{0}\>
\end{align*}
Solving for $s$, we find that
\begin{align}\label{equation: s before limit}
s & = (1-p)\<\phi,\E Y_{0}\>\Big(\<\phi,x\>-p\Big\<\phi\;,\;\sum_{k=1}^{\infty}\E [e^{-\alpha_{k}\tau_{1}}]\frac{\<a_{k},x\>}{p\E [e^{-\alpha_{k}\tau_{1}}]+(1-p)}a_{k}\Big\>\Big)^{-1}
\end{align}

Let $\{\phi_{n}\}_{n=1}^{\infty}\in C^{\infty}_{0}(0,L)$ be such that $\|\phi_{n}\|_{L^{1}}=1$ for each $n$ and 
$\lim_{n\to\infty}\<\phi_{n},f\>= f(L)$
 for each $f\in C[0,L]$.
 %\begin{revision}
We claim that
\begin{align}\label{second argument}
\lim_{n\to\infty}\<\phi_{n},\E Y_{0}\> & = b.
\end{align}
To see this, first note that $Y_0$ is almost surely smooth and $Y_0(L)=b$ almost surely, so $\lim_{n\to\infty}\<\phi_n,Y_0\>=b$ almost surely. Further, the inner product with $\phi_n$ is a bounded linear functional so $\<\phi_n,\E Y_0\>=\E\<\phi_n,Y_0\>$. Finally, $\|\phi_n\|_{L^1}=1$ and $\|Y_{0}\|_{\infty}\le b$ almost surely by Lemma~\ref{infinity bound2} so the bounded convergence theorem gives Equation~(\ref{second argument}).
%\end{revision}
Now, we want to show that
\begin{equation}\label{eq: interchange}
\begin{aligned}
\lim_{n\to\infty}&\Big\<\phi_{n}\;,\;\sum_{k=1}^{\infty}\frac{\E [e^{-\alpha_{k}\tau_{1}}]\<a_{k},x\>}{p\E [e^{-\alpha_{k}\tau_{1}}]+(1-p)}a_{k}\Big\>
 = \sum_{k=1}^{\infty}\frac{\E [e^{-\alpha_{k}\tau_{1}}]\<a_{k},x\>}{p\E [e^{-\alpha_{k}\tau_{1}}]+(1-p)}a_{k}(L).
\end{aligned}
\end{equation}

To do this, we need to show that $\sum_{k=1}^{\infty}\frac{\E [e^{-\alpha_{k}\tau_{1}}]\<a_{k},x\>}{p\E [e^{-\alpha_{k}\tau_{1}}]+(1-p)}a_{k}(x)$
converges uniformly in $x$.
Note that for each $k$
\begin{align*}
a_{k}(x)=\sqrt{\frac2L}\sin{\left(\frac{(2k-1)\pi x}{2L}\right)}
\qquad\text{and}\qquad
\alpha_{k}=\frac{D (2k-1)^{2}\pi^{2}}{4L^{2}}.
\end{align*}
Hence, 
$\E [e^{-\alpha_{k}\tau_{1}}] \le 1$ and
$p\E e^{-\alpha_{k}\tau_{1}}+(1-p) \ge 1-p$.
Furthermore, 
\begin{align*}
\|a_{k}\|_{\infty} \le \sqrt{\frac2L}
\qquad\text{and}\qquad
\<a_{k},x\> & %\int_{0}^{L} a_{k}(x)x\,dx
= \frac{4\sqrt{2}L^{3/2}}{\pi^{2}}\frac{(-1)^{k+1}}{(2k-1)^{2}}.
\end{align*}
So for any $N\in\mathbb{N}$
\begin{align*}
\left\|\sum_{k=N}^{\infty}\frac{\E [e^{-\alpha_{k}\tau_{1}}]\<a_{k},x\>}{p\E [e^{-\alpha_{k}\tau_{1}}]+(1-p)}a_{k}(x)\right\|_{\infty}
& \le \sum_{k=N}^{\infty}\frac{\left|\<a_{k},x\>\right|}{1-p}\left\|a_{k}(x)\right\|_{\infty}\\
& = \sum_{k=N}^{\infty}\frac{16L}{(1-p)\pi^{2}(2k-1)^{2}}\to0\quad\text{as }N\to\infty.
\end{align*}
Hence Equation~(\ref{eq: interchange}) is verified, and thus by Equation~(\ref{equation: s before limit}) we have that
\begin{align*}
s & = \frac{(1-p)b}{L-p\sum_{k=1}^{\infty}\E [e^{-\alpha_{k}\tau_{1}}]\frac{\<a_{k},x\>}{p\E e^{-\alpha_{k}\tau_{1}}+(1-p)}a_{k}(L)}.
\end{align*}
Using the assumptions on $\tau_{0}$, $\tau_{1}$, $\alpha_{k}$, and $a_{k}$, and using a series simplification formula found in Mathematica (\cite{mathematica}), this becomes
\begin{align*}
s & = \left(1+\frac{\rho}{\gamma}\tanh(\gamma)\right)^{-1}\frac{b}{L}
\end{align*}
where $\gamma=L\sqrt{(r_{0}+r_{1})/D }$ and $\rho=r_{0}/r_{1}$. This expectation is much different than the expectation we obtain when switching between boundary conditions of the same type in the next example below.

%%%%%%%%%%%%%%%%%%%%%%%%%%%%%%%%%%%%%%%%%%%%%%%%%%%%%%%%%%%%%%

\subsection{Example~\ref{ex: dir dir}: Dirichlet/Dirichlet switching}\label{dir dir}

Consider the stochastic process that solves
\begin{align}
\partial_{t}u & = D\Delta u \quad\text{in }(0,L)
\end{align}
and at exponentially distributed times switches between the boundary conditions
\begin{equation*}
\begin{aligned}[c]
\begin{cases}
u(0,t)  = 0\\
u(L,t)  = 0
\end{cases}
\end{aligned}
\qquad\text{and}\qquad
\begin{aligned}[c]
\begin{cases}
u(0,t)  = 0\\
u(L,t)  = b>0.
\end{cases}
\end{aligned}
\end{equation*}
 
To cast this problem in the setting of previous sections, we set our Hilbert space to be $L^{2}[0,L]$ and define the operator
\begin{align*}
Bu  := \Delta u\quad\text{if }u\in D(B)  := H_{0}^{1}(0,L)\cap H^{2}(0,L).
\end{align*}
We set $c=\frac{b}{L}x\in L^{2}[0,L]$. Let our switching time distributions, $\mu_{0}$ and $\mu_{1}$, be exponential with respective rate parameters $r_{0}$ and $r_{1}$. Let $u(t,\omega)$ be the $H$-valued process defined in Equation~(\ref{process definition}) 
with 
\begin{align}\label{eqn: dir dir maps}
\Phi^{1}_{t}(f) = e^{Bt}f
\qquad\text{and}\qquad
\Phi^{0}_{t}(f) = e^{Bt}(f-c)+c.
\end{align}

We are interested in studying the large time distribution of $u(t)$. As in Example~\ref{ex: dir neu}, we can use Corollary~\ref{cor conv} to obtain that $u(t)$ converges in distribution as $t\to\infty$ to some $L^{2}[0,L]$-valued random variable $\bar{u}$ defined in the statement of the corollary, and use Proposition~\ref{prop:invariant set} to obtain that $\bar{u}(x)\le\frac{b}{L}x$ almost surely for each $x\in[0,L]$. And as in Example~\ref{ex: dir neu}, we can use Theorem~\ref{thm: mean limit} to find the expectation of $\bar{u}$. However, since this problem switches between boundary conditions of the same type, we will be able to obtain much more information about $\bar{u}$.
%%%%%%%%%%%%%%%%%%%%%%%%%%%%%%%%%%%%%%%%%%%%%%%%%%%%%%%%%%%%%%%

Switching between two boundary conditions of the same type is significantly simpler than switching between boundary conditions of different types. This is because the two solution operators that we use when switching between boundary conditions of the same type both employ the same semigroup and thus the same orthonormal eigenbasis. Hence, we only need to consider the projections of the stochastic process in this one basis.
In this example, the orthonormal eigenbasis and corresponding eigenvalues for $B$ are for $k\in\mathbb{N}$
\begin{align}\label{eqn: evals}
b_{k} = \sqrt{\frac{2}{L}}\sin\left(\frac{k\pi}{L}x\right)
\qquad\text{and}\qquad
-\beta_{k} = -D(k\pi/L)^{2}.
\end{align}

Observe that for each $k$, the Fourier coefficient $u_{k}(t):=\< b_{k},u(t)\>\in\mathbb{R}$ is the solution to a one-dimensional ODE with a randomly switching right-hand side. Specifically, if $J_{t}$ is the jump process defined in Equation~(\ref{z defn}), then in between jumps of $J_{t}$ the process $u_{k}(t)$ satisfies
\begin{align}
\frac{d}{dt}u_{k} & = -J_{t}\beta_{k}u_{k}-(1-J_{t})\beta_{k}(u_{k}-c_{k}),\notag\\
\text{where}\quad c_{k} & := \<b_{k},c\> 
= \frac{(-1)^{k+1}b\sqrt{2L}}{k\pi}\label{eqn: c}.
\end{align}

We can use previous results on one-dimensional ODEs with randomly switching right-hand sides (see \cite{hurth_2010} or \cite{boxma_/off_2005}) to determine the marginal distributions of the Fourier coefficients of the stationary $\bar{u}$.
For each $k$, the marginal distributions of the Fourier coefficients of $Y_{0}$ and $Y_{1}$ are given by
\begin{align}\label{eqn:beta dist}
\frac{\<b_{k},Y_{0}\>}{c_{k}} \sim\text{Beta}\left(\frac{r_1}{\beta_{k}}+1,\frac{r_0}{\beta_k}\right)
\quad\text{and}\quad
\frac{\<b_{k},Y_{1}\>}{c_{k}} \sim\text{Beta}\left(\frac{r_1}{\beta_{k}},\frac{r_0}{\beta_k}+1\right).
\end{align}
Combining this with Corollary~\ref{cor conv} gives the marginal distributions of the Fourier coefficients of $\bar{u}$.

From Equation~(\ref{eqn:beta dist}) and Corollary~\ref{cor conv}, we obtain
\begin{align}\label{eqn: dir dir mean}
\E \bar{u} = (1-p)\frac{b}{L}x,
\end{align}
where $p = r_{0}/(r_{0}+r_{1})$. Thus, the expectation of the process at large time is merely the solution to the time homogeneous PDE with boundary conditions given by the average of the two boundary conditions that the process switches between. 

To further illustrate the usefulness of Equation~(\ref{eqn:beta dist}), we calculate the $L^{2}$-variance of $\bar{u}$. 
It follows from Equation~(\ref{eqn: dir dir mean}) that
\begin{align}\label{eqn: l2 variance}
\E \|\bar{u}-\E \bar{u}\|^{2}
 = \E \|\bar{u}\|^{2} - \frac{L}{3}b^{2}(1-p)^{2}.
\end{align}
Now by Corollary~\ref{cor conv}, we have that $\E \|\bar{u}\|^{2} = p\E \|Y_{1}\|^{2} + (1-p)\E \|Y_{0}\|^{2}$.
Combining this with Equation~(\ref{eqn:beta dist}) we obtain
\begin{align}\label{eqn: no plug}
\E \|\bar{u}\|^{2}
 = \sum_{k=1}^{\infty}\frac{r_{1}(r_{1}+\beta_{k})}{(r_{0}+r_{1})(r_{0}+r_{1}+\beta_{k})}c_{k}^{2}.
\end{align}
After plugging in our values for $\beta_{k}$, $b_{k}$ and $c_{k}$ in Equation~(\ref{eqn: no plug}), using a series simplification formula found in Mathematica (\cite{mathematica}), and combining with Equation~(\ref{eqn: l2 variance}), we obtain the $L^{2}$-variance
\begin{align*}
\E \|\bar{u}-\E \bar{u}\|^{2}
 = \frac{b^{2}Dr_{1}r_{0}(\gamma\coth(\gamma)-1)}{L(r_{0}+r_{1})^{3}},
\end{align*}
where $\gamma = L\sqrt{r_{0}+r_{1}/D}$.

While Equation~(\ref{eqn:beta dist}) is useful, knowing the marginal distributions of the individual Fourier coefficients of $Y_{0}$ or $Y_{1}$ is of course not enough to find their joint distributions, and the one-dimensional ODE methods used to obtain Equation~(\ref{eqn:beta dist}) do not give information about these joint distributions. We can, however, use our machinery developed in Section~\ref{hilbertchapter} to study these joint distributions. 

First, we can use Corollary~\ref{cor conv} and Proposition~\ref{invariance} to obtain joint statistics of the components of $\bar{u}$. To illustrate, we will calculate $\E \<Y_0,b_{n}\>\<Y_0,b_{m}\>$. Proposition~\ref{invariance} gives
\begin{align*}
\E \<Y_0,b_{n}\>\<Y_0,b_{m}\> = \E \<e^{B\tau_{0}}(e^{B\tau_{1}}Y_0-c)+c,b_{n}\>\<e^{B\tau_{0}}(e^{B\tau_{1}}Y_0-c)+c,b_{m}\>,
\end{align*}
where $\tau_{0}$ and $\tau_{1}$ are independent exponential random variables with rates $r_{0}$ and $r_{1}$. After recalling some basic facts about exponential random variables and making some algebraic manipulations, we obtain that $\E \<Y_0,b_{n}\>\<Y_0,b_{m}\>$ is equal to
\begin{align*}  
\frac{(\beta_{m} + \beta_{n} + 
     r_1) ((\beta_{m} + \beta_{n}) (\beta_{m} + r_1) (\beta_{n} + 
        r_1) + (2 \beta_{m} \beta_{n} + (\beta_{m} + \beta_{n}) r_1) r_0)}{(\beta_{m} + \beta_{n}) (\beta_{m} + r_1 + 
     r_0) (\beta_{n} + r_1 + r_0) (\beta_{m} + \beta_{n} + r_1 + r_0)}c_{m}c_{n}.
\end{align*}
From this, we can readily compute the covariance of $\<Y_0,b_{n}\>$ and $\<Y_0,b_{m}\>$. Other joint statistics of the Fourier coefficients of $Y_{0}$ and $Y_{1}$ (and hence $\bar{u}$ by Corollary~\ref{cor conv}) are found in analogous ways.

Next, we can use Proposition~\ref{prop:invariant set} to show that $\bar{u}$ almost surely has a very specific structure.
\begin{proposition}
Let $b_{k}$ be as in Equation~(\ref{eqn: evals}), $c_{k}$ as in Equation~(\ref{eqn: c}), $\bar{u}$ be as in Corollary~\ref{cor conv}, and $\bar{u}_{k}:=\<b_{k},\bar{u}\>$. Then for $k<n$ and for almost all $\omega\in\Omega$
\begin{align*}
\left(\frac{\bar{u}_{k}(\omega)}{c_{k}}\right)^{\left(n/k\right)^{2}}
\le\frac{\bar{u}_{n}(\omega)}{c_{n}}\le
1-\left(1-\frac{\bar{u}_{k}(\omega)}{c_{k}}\right)^{\left(n/k\right)^{2}}.
\end{align*}
\end{proposition}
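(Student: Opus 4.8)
The plan is to realize these inequalities as the defining conditions of a closed set in $L^{2}[0,L]$ that is invariant under both flows $\Phi^{0}_{t},\Phi^{1}_{t}$, and then apply Proposition~\ref{prop:invariant set}. Since $\bar u=\xi Y_{1}+(1-\xi)Y_{0}$ is almost surely equal to either $Y_{1}$ or $Y_{0}$ by Corollary~\ref{cor conv}, and a countable intersection of full-measure events is again full measure, it suffices to prove that for each fixed pair $k<n$ both $Y_{0}$ and $Y_{1}$ almost surely lie in
\begin{align*}
S:=\Big\{f\in L^{2}[0,L]:\ 0\le v_{j}(f)\le 1\ \text{for all }j,\ \ v_{k}(f)^{p}\le v_{n}(f)\le 1-\big(1-v_{k}(f)\big)^{p}\Big\},
\end{align*}
where $v_{j}(f):=\langle b_{j},f\rangle/c_{j}$ and $p:=(n/k)^{2}=\beta_{n}/\beta_{k}>1$, using $\beta_{j}=D(j\pi/L)^{2}$ from Equation~(\ref{eqn: evals}).

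First I would record how the flows act in these coordinates. From $\Phi^{1}_{t}(f)=e^{Bt}f$ and $\Phi^{0}_{t}(f)=e^{Bt}(f-c)+c$, self-adjointness of $B$ gives $v_{j}\mapsto e^{-\beta_{j}t}v_{j}$ under $\Phi^{1}_{t}$ and $v_{j}\mapsto 1-e^{-\beta_{j}t}(1-v_{j})$ under $\Phi^{0}_{t}$. In particular the box $\{0\le v_{j}\le 1\ \forall j\}$ is preserved by both maps, so every quantity $v_{k}(f)^{p}$ appearing in $S$ is well defined. The set $S$ is nonempty (it contains $0$ and $c$), and it is closed, since each $v_{j}$ is a bounded linear functional and the defining relations are non-strict inequalities between continuous functions of $f$.

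The key step is to verify $\Phi^{i}_{t}(S)\subseteq S$ for $i=0,1$ and $t\ge 0$. Write $s:=e^{-\beta_{k}t}$, so that $e^{-\beta_{n}t}=s^{p}$. The lower bound $v_{k}^{p}\le v_{n}$ is preserved by $\Phi^{1}_{t}$ because both sides are multiplied by $s^{p}$, and the upper bound $(1-v_{k})^{p}\le 1-v_{n}$ is preserved by $\Phi^{0}_{t}$ because both sides are multiplied by $s^{p}$. For the two remaining claims I would use the involution $R(f):=c-f$, which sends $v_{j}$ to $1-v_{j}$, satisfies $R\Phi^{1}_{t}R=\Phi^{0}_{t}$ and $R\Phi^{0}_{t}R=\Phi^{1}_{t}$, and interchanges the lower- and upper-bound conditions; hence the statement that $\Phi^{1}_{t}$ preserves the upper bound is equivalent to the statement that $\Phi^{0}_{t}$ preserves the lower bound, and it suffices to prove the latter. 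Using the hypothesis $v_{n}\ge v_{k}^{p}$ and writing $a:=1-v_{k}\in[0,1]$, that claim reduces to the scalar inequality
\begin{align*}
(1-sa)^{p}\le 1-s^{p}\big(1-(1-a)^{p}\big),\qquad s,a\in[0,1],\ p>1,
\end{align*}
which I expect to be the only genuinely non-formal point of the argument. I would prove it by fixing $a$ and setting $h(s):=1-s^{p}+s^{p}(1-a)^{p}-(1-sa)^{p}$; then $h(0)=h(1)=0$, and $h''(s)=p(p-1)\big[s^{p-2}\big((1-a)^{p}-1\big)-a^{2}(1-sa)^{p-2}\big]\le 0$ on $(0,1)$, because $p(p-1)>0$, $(1-a)^{p}\le 1$, and $1-sa>0$ there. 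A function continuous on $[0,1]$ and concave on $(0,1)$ with zero boundary values is nonnegative, so $h\ge 0$, which is the desired inequality.

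With $S$ closed, nonempty, and invariant, Proposition~\ref{prop:invariant set} yields $Y_{0},Y_{1}\in\bar S=S$ almost surely for this fixed pair $k<n$. Intersecting over all (countably many) pairs $k<n$, both $Y_{0}$ and $Y_{1}$ — and therefore $\bar u$, which almost surely equals one of them — satisfy every one of the stated inequalities simultaneously, which completes the proof.
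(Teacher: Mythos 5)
Your proposal is correct and follows essentially the same route as the paper: build a closed, nonempty set invariant under both $\Phi^{0}_{t}$ and $\Phi^{1}_{t}$ that encodes the stated inequalities in the normalized Fourier coordinates, apply Proposition~\ref{prop:invariant set} to place $Y_{0},Y_{1}$ (hence $\bar u$ via Corollary~\ref{cor conv}) in it, and intersect over pairs $k<n$. The only difference is cosmetic: the paper describes the invariant set as the region bounded by the two flow curves through $0$ and $c$ and leaves the invariance check as "straightforward," whereas you verify it explicitly, reducing the nontrivial case by the symmetry $f\mapsto c-f$ to the scalar inequality you prove by concavity — a correct filling-in of the omitted detail.
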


\begin{proof}
For each $k,n\in\mathbb{N}$, let $R_{k,n}$ be the closed planar region
enclosed by the following two planar curves: 
\begin{align*}
\{P_{k,n}(e^{-Bt}c):t\ge0\}
\quad\text{and}\quad
\{P_{k,n}(c-e^{-Bt}c):t\ge0\}.
\end{align*}
Define $S_{k,n}\subset L^{2}[0,L]$ by
\begin{align*}
S_{k,n} = \{f\in L^{2}[0,L]: P_{k,n}(f)\in R_{k,n}\}.
\end{align*}
It is straightforward to check that $S_{k,n}$ is invariant under
$\Phi^{0}_{t}$ and $\Phi^{1}_{t}$ defined in Equation~(\ref{eqn: dir
  dir maps}) for each $k,n\in\mathbb{N}$. Hence, $\cap_{k,n}S_{k,n}$
is invariant under $\Phi^{0}_{t}$ and $\Phi^{1}_{t}$ and we have by
Proposition~\ref{prop:invariant set} that $Y_{0}$ and $Y_{1}$ (and
hence $\bar{u}$ by Corollary~\ref{cor conv}) are almost surely
contained in $\cap_{k,n}S_{k,n}$. 

For $k<n$, observe that $R_{k,n}$ can be written as
\begin{align*}
R_{k,n} = \Bigg\{(x,y)\in\mathbb{R}^{2}:0\le \frac{x}{c_{k}}\le
1\text{ and }\Big(\frac{x}{c_{k}}\Big)^{\left(n/k\right)^{2}}\le
\frac{y}{c_{n}}\le
1-\Big(1-\frac{x}{c_{k}}\Big)^{\left(n/k\right)^{2}}\Bigg\}. 
\end{align*}
The desired result follows.
\qquad\end{proof}

Furthermore, we have the following regularity result on
$\bar{u}$. Notice that it implies that as we move to finer and finer
spatial scales by taking $k \rightarrow \infty$, there is asymptotically
only one piece of randomness which determines the fine scale structure.
\begin{proposition}\label{BC bounds}
Let $r<1/2$, $b_{k}$ be as in Equation~(\ref{eqn: evals}), $c_{k}$ as in Equation~(\ref{eqn: c}), $Y_{0}^{k}:=\<b_{k},Y_{0}\>$, and $Y_{1}^{k}:=\<b_{k},Y_{1}\>$. Then for each $\omega\in\Omega$, there exists an $M(\omega)$ so that
\begin{align*}
1-\frac{M(\omega)}{k^{r}}
\le & \frac{Y^{k}_{0}(\omega)}{c_{k}}\le
1+\frac{M(\omega)}{k^{r}}
\quad\text{and}\quad
-\frac{M(\omega)}{k^{r}}
\le  \frac{Y^{k}_{1}(\omega)}{c_{k}}\le
\frac{M(\omega)}{k^{r}}.
\end{align*}
\end{proposition}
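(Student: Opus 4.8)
The plan is to exploit that, because this example switches between boundary conditions of the same type, both $\Phi^{0}_{t}$ and $\Phi^{1}_{t}$ are diagonal in the single eigenbasis $\{b_{k}\}$, so each Fourier coefficient of $Y_{0}$ and $Y_{1}$ admits an explicit closed form. Fix $k$ and set $p_{j}:=e^{-\beta_{k}\tau_{0}^{j}}$ and $q_{j}:=e^{-\beta_{k}\tau_{1}^{j}}$. Projecting onto $b_{k}$, the maps $\Phi^{0}_{\tau_{0}^{j}}$ and $\Phi^{1}_{\tau_{1}^{j}}$ act on $\mathbb{R}$ as the affine maps $v\mapsto p_{j}v+(1-p_{j})c_{k}$ and $v\mapsto q_{j}v$. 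Composing these along $\varphi^{-n}_{\omega}(0)=G^{1}_{\omega}\circ\cdots\circ G^{n}_{\omega}(0)$ and $\gamma^{-n}_{\omega}(0)=F^{1}_{\omega}\circ\cdots\circ F^{n}_{\omega}(0)$ and letting $n\to\infty$ (the limit exists, is independent of the base point $0$ by Proposition~\ref{prop:pullback}, and $\<b_{k},\cdot\>$ is continuous), one obtains, writing $S_{j}=\sum_{i=1}^{j}(\tau_{0}^{i}+\tau_{1}^{i})$ with $S_{0}=0$,
\begin{align*}
\frac{Y_{1}^{k}(\omega)}{c_{k}}=\sum_{j=1}^{\infty}q_{j}(1-p_{j})\,e^{-\beta_{k}S_{j-1}}
\qquad\text{and}\qquad
\frac{Y_{0}^{k}(\omega)}{c_{k}}=\sum_{j=1}^{\infty}(1-p_{j})\,e^{-\beta_{k}S_{j-1}},
\end{align*}
both convergent series of nonnegative terms since $S_{m}\to\infty$ for $\P$-a.e.\ $\omega$.

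Next I would put the two errors on the same footing. Telescoping $1=\sum_{j\ge1}\bigl(e^{-\beta_{k}S_{j-1}}-e^{-\beta_{k}S_{j}}\bigr)$ against the second series and using $e^{-\beta_{k}S_{j}}=p_{j}q_{j}e^{-\beta_{k}S_{j-1}}$ gives
\begin{align*}
1-\frac{Y_{0}^{k}(\omega)}{c_{k}}=\sum_{j=1}^{\infty}p_{j}(1-q_{j})\,e^{-\beta_{k}S_{j-1}}
\qquad\text{and}\qquad
\frac{Y_{1}^{k}(\omega)}{c_{k}}=\sum_{j=1}^{\infty}q_{j}(1-p_{j})\,e^{-\beta_{k}S_{j-1}},
\end{align*}
which are interchanged by the swap $\tau_{0}^{j}\leftrightarrow\tau_{1}^{j}$; in particular $0\le Y_{0}^{k}/c_{k}\le1$ and $0\le Y_{1}^{k}/c_{k}\le1$, so the outer inequalities of the proposition are automatic and it suffices to bound $\mathcal{E}_{k}:=1-Y_{0}^{k}/c_{k}$ from above, the bound on $Y_{1}^{k}/c_{k}$ then following by symmetry.

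For the bound, observe $0\le\mathcal{E}_{k}\le\sum_{j\ge1}p_{j}e^{-\beta_{k}S_{j-1}}$, whose $j=1$ term is $e^{-\beta_{k}\tau_{0}^{1}}$ and whose $j\ge2$ terms are at most $e^{-\beta_{k}S_{j-1}}$. By the strong law of large numbers there is $m_{0}=m_{0}(\omega)$ with $S_{m}\ge\tfrac12(\E\tau_{0}+\E\tau_{1})m$ for all $m\ge m_{0}$, so once $k$ is large enough that $\beta_{k}\cdot\tfrac12(\E\tau_{0}+\E\tau_{1})\ge\log2$ one gets
\begin{align*}
0\le\mathcal{E}_{k}\le e^{-\beta_{k}\tau_{0}^{1}}+(m_{0}-1)\,e^{-\beta_{k}S_{1}}+2\,e^{-\beta_{k}m_{0}(\E\tau_{0}+\E\tau_{1})/2}.
\end{align*}
Since $\beta_{k}=D\pi^{2}k^{2}/L^{2}$ and $\tau_{0}^{1}(\omega),S_{1}(\omega)>0$ for $\P$-a.e.\ $\omega$, this yields $\mathcal{E}_{k}\le C(\omega)e^{-c(\omega)k^{2}}$ for all $k\ge k_{0}(\omega)$, and the same estimate with $\tau_{0}\leftrightarrow\tau_{1}$ bounds $Y_{1}^{k}/c_{k}$. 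As $k^{r}e^{-c(\omega)k^{2}}\to0$ for every $r$, the suprema of $k^{r}\mathcal{E}_{k}$ and $k^{r}|Y_{1}^{k}/c_{k}|$ over $k\ge k_{0}(\omega)$ are finite; enlarging them to also dominate the finitely many terms with $k<k_{0}(\omega)$ produces a single $M(\omega)<\infty$ with $|1-Y_{0}^{k}/c_{k}|\le M(\omega)/k^{r}$ and $|Y_{1}^{k}/c_{k}|\le M(\omega)/k^{r}$ for every $k$, which is the assertion.

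The step needing the most care is the first: verifying the closed-form series, i.e.\ that the Fourier projection $\<b_{k},\cdot\>$ commutes with the $n\to\infty$ limits defining $Y_{0},Y_{1}$. This is immediate from Proposition~\ref{prop:pullback} together with continuity of $\<b_{k},\cdot\>$, but it is the only place the abstract machinery of Section~\ref{hilbertchapter} is genuinely used; everything afterward is elementary estimation. Conceptually, switching between same-type boundary conditions keeps the dynamics diagonal, so each spatial scale $k$ collapses to a scalar iterated affine contraction whose pullback is an explicit geometric-type series, and the quadratic growth of $\beta_{k}$ in $k$ forces the ``error'' coefficients to decay like $e^{-c(\omega)k^{2}}$ — far faster than the stated $k^{-r}$, $r<1/2$. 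This is the quantitative form of the observation preceding the proposition that, asymptotically, a single bit of randomness ($\xi$ in Corollary~\ref{cor conv}) determines the fine-scale structure of $\bar u$.
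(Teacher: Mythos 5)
Your proposal is correct, but it follows a genuinely different route from the paper's. The paper argues through second moments: it takes the Beta marginals of the Fourier coefficients from Equation~(\ref{eqn:beta dist}), applies Chebyshev's inequality to the events $A_{k}=\{|Y_{0}^{k}/c_{k}-\E Y_{0}^{k}/c_{k}|>k^{-r}\}$, notes $\P(A_{k})\sim k^{2(r-1)}$, and invokes Borel--Cantelli --- which is precisely where the hypothesis $r<1/2$ enters --- before using $\beta_{k}\sim k^{2}$ to absorb the centering term. You instead exploit the diagonal structure pathwise: your closed-form series for $Y_{0}^{k}/c_{k}$, $1-Y_{0}^{k}/c_{k}$ and $Y_{1}^{k}/c_{k}$ are correct (the interchange of $\langle b_{k},\cdot\rangle$ with the pullback limit is justified exactly as you say, via Proposition~\ref{prop:pullback} and continuity of the projection, and the telescoping step only needs $S_{m}\to\infty$ a.s.), and the strong law for $S_{m}$ then gives the almost sure bound $\max\{1-Y_{0}^{k}/c_{k},\,Y_{1}^{k}/c_{k}\}\le C(\omega)e^{-c(\omega)k^{2}}$ for all large $k$, which is much stronger than the stated $M(\omega)k^{-r}$; in particular your argument removes the restriction $r<1/2$ altogether, and it already contains the substance of the corollary that follows the proposition (the dominant terms $e^{-\beta_{k}\tau_{0}^{1}}$ and $e^{-\beta_{k}\tau_{1}^{1}}$ exhibit the single switching time that governs the fine scales; note this is compatible with $\E[1-Y_{0}^{k}/c_{k}]\sim r_{0}/\beta_{k}$ because $c(\omega)$ is random). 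What the paper's moment/Borel--Cantelli route buys is independence from an explicit pathwise formula --- it needs only the marginal laws cited from the one-dimensional ODE literature, so it transfers to settings where variances are computable but closed-form series are not; what your route buys is a sharper, self-contained, purely pathwise estimate. One shared caveat: like the paper's own proof, yours yields the bound only for almost every $\omega$ (SLLN, positivity of $\tau_{0}^{1},\tau_{1}^{1}$, divergence of $S_{m}$), which is all that the statement's ``for each $\omega\in\Omega$'' can mean here.
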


\begin{proof}
For each $k$, define
\begin{align*}
A_{k}:=\Big\{\omega\in\Omega:\Big|\frac{Y_{0}^{k}(\omega)}{c_{k}}-\E \frac{Y_{0}^{k}}{c_{k}}\Big|>\frac{1}{k^{r}}\Big\}.
\end{align*}
By Chebyshev's inequality and Equation~(\ref{eqn:beta dist}), we have that
\begin{align*}
\P(A_{k})\le \frac{\text{Var}(Y_{0}^{k})}{c_{k}^{2}}k^{2r} & =\frac{\beta_{k} r_{0} (\beta_{k} + r_{1})}{(\beta_{k} + r_{0} + r_{1})^2 (2 \beta_{k} + r_{0} + r_{1})}k^{2r}
\sim k^{2(r-1)}\text{ as }k\to\infty.
\end{align*}
Thus if $r<1/2$, then $\sum_{k=1}^{\infty}\P(A_{k})<\infty$ and we conclude by the Borel-Cantelli Lemma that $\P(A_{k}\text{ infinitely often})=0$. Hence, for almost all $\omega\in\Omega$, we can choose an $M(\omega)$ so that for all $k$,
\begin{align*}
\frac{r_{1}+\beta_{k}}{r_{0}+r_{1}+\beta_{k}}-\frac{M(\omega)}{k^{r}}
\le \frac{Y^{k}_{0}(\omega)}{c_{k}}\le
\frac{r_{1}+\beta_{k}}{r_{0}+r_{1}+\beta_{k}}+\frac{M(\omega)}{k^{r}}.
\end{align*}
A similar argument shows that for almost all $\omega\in\Omega$, we can choose an $M(\omega)$ so that for all $k$,
\begin{align*}
\frac{r_{1}}{r_{0}+r_{1}+\beta_{k}}-\frac{M(\omega)}{k^{r}}
\le & \frac{Y^{k}_{1}(\omega)}{c_{k}}\le
\frac{r_{1}}{r_{0}+r_{1}+\beta_{k}}+\frac{M(\omega)}{k^{r}}
\end{align*}
Since $\beta_{k}\sim k^{2}$ as $k\to\infty$, the desired results follows.
\qquad\end{proof}

We can iterate this proposition to obtain the following result which shows that $Y_{0}^{k}$ and $Y_{1}^{k}$ 
depend essentially on only one switching time for large $k$. Note that we could continue to iterate this proposition to obtain similar bounds. Recall the definition of each $\omega\in\Omega$ in Equation~(\ref{omega definition}).
\begin{corollary}
Let $r<1/2$, $b_{k}$ be as in Equation~(\ref{eqn: evals}), $c_{k}$ as in Equation~(\ref{eqn: c}), $Y_{0}^{k}:=\<b_{k},Y_{0}\>$, and $Y_{1}^{k}:=\<b_{k},Y_{1}\>$. Then for each $\omega\in\Omega$, there exists an $M_{0}(\omega)$ depending only on $\{(\tau_{0}^{k+1},\tau_{1}^{k})\}_{k\ge1}$ and an $M_{1}(\omega)$ depending only on $\{(\tau_{0}^{k},\tau_{1}^{k+1})\}_{k\ge1}$ such that
\begin{align*}
1 - e^{-\beta_{k}\tau_{0}^{1}}\Big(\frac{M_{0}(\omega)}{k^{r}}+1\Big)
\le&\frac{Y_{0}^{k}(\omega)}{c_{k}}\le
1 + e^{-\beta_{k}\tau_{0}^{1}}\Big(\frac{M_{0}(\omega)}{k^{r}}-1\Big)\\
e^{-\beta_{k}\tau_{1}^{1}}\Big(1-\frac{M_{1}(\omega)}{k^{r}}\Big)
\le&\frac{Y_{1}^{k}(\omega)}{c_{k}}\le
e^{-\beta_{k}\tau_{1}^{1}}\Big(1+\frac{M_{1}(\omega)}{k^{r}}\Big)
\end{align*}
\end{corollary}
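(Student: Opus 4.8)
The plan is to strengthen the distributional identity of Proposition~\ref{invariance} to a \emph{pathwise} one linking $Y_0$ and $Y_1$ at two carefully chosen sub-environments, and then apply Proposition~\ref{BC bounds} at those sub-environments. (As in Proposition~\ref{BC bounds}, the conclusion is understood to hold for almost every $\omega$.) Recall from Proposition~\ref{prop:pullback} that, independently of $x$,
\[
Y_1(\omega)=\lim_{n\to\infty}\Phi^1_{\tau_1^1}\circ\Phi^0_{\tau_0^1}\circ\cdots\circ\Phi^1_{\tau_1^n}\circ\Phi^0_{\tau_0^n}(x),\qquad
Y_0(\omega)=\lim_{n\to\infty}\Phi^0_{\tau_0^1}\circ\Phi^1_{\tau_1^1}\circ\cdots\circ\Phi^0_{\tau_0^n}\circ\Phi^1_{\tau_1^n}(x).
\]
First I would pull the outermost factor out of each limit (using continuity of $\Phi^0_{\tau_0^1}$, resp.\ $\Phi^1_{\tau_1^1}$, exactly as in the proof of Proposition~\ref{invariance}) and then reassociate the remaining alternating composition into blocks $\Phi^1_{\tau_1^k}\circ\Phi^0_{\tau_0^{k+1}}$, resp.\ $\Phi^0_{\tau_0^k}\circ\Phi^1_{\tau_1^{k+1}}$. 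Since $\Phi^1_t\circ\Phi^0_s$ is the map ``$G$'' associated with the pair $(s,t)$ and $\Phi^0_s\circ\Phi^1_t$ is the map ``$F$'' associated with $(s,t)$, the reassociated products are literally the defining pullback sequences of $Y_1$, resp.\ $Y_0$, run on new environments, yielding the pathwise identities $Y_0(\omega)=\Phi^0_{\tau_0^1}\big(Y_1(\omega')\big)$ and $Y_1(\omega)=\Phi^1_{\tau_1^1}\big(Y_0(\omega'')\big)$, where $\omega'$ has $k$-th coordinate $(\tau_0^{k+1},\tau_1^k)$ and $\omega''$ has $k$-th coordinate $(\tau_0^k,\tau_1^{k+1})$.

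Next I would observe that the maps $\omega\mapsto\omega'$ and $\omega\mapsto\omega''$ push $\P$ forward to $\P$: since all the $\tau_0^k$ and $\tau_1^k$ are independent with $\tau_i^k\sim\mu_i$, reindexing the $\tau_0$-sequence alone (or the $\tau_1$-sequence alone) again produces an i.i.d.\ $\mu_0\times\mu_1$ sequence. Hence the probability-one event on which Proposition~\ref{BC bounds} holds pulls back to a probability-one event, so that proposition's bounds are valid for $Y_1(\omega')$ and for $Y_0(\omega'')$. Moreover the constant furnished by Proposition~\ref{BC bounds} for $Y_1(\omega')$ is a function of $\omega'$ only, i.e.\ of $\{(\tau_0^{k+1},\tau_1^k)\}_{k\ge1}$ only --- this is the $M_0(\omega)$ of the statement --- and likewise the constant for $Y_0(\omega'')$ depends only on $\{(\tau_0^k,\tau_1^{k+1})\}_{k\ge1}$, which is $M_1(\omega)$.

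Finally I would pass to the Fourier coefficient $\<b_k,\cdot\>$. Because $\<b_k,e^{Bt}f\>=e^{-\beta_k t}\<b_k,f\>$, the two flows act on the $k$-th coordinate by $\<b_k,\Phi^0_t(f)\>=e^{-\beta_k t}(\<b_k,f\>-c_k)+c_k$ and $\<b_k,\Phi^1_t(f)\>=e^{-\beta_k t}\<b_k,f\>$; substituting the pathwise identities gives $Y_0^k(\omega)/c_k=1+e^{-\beta_k\tau_0^1}\big(Y_1^k(\omega')/c_k-1\big)$ and $Y_1^k(\omega)/c_k=e^{-\beta_k\tau_1^1}\,Y_0^k(\omega'')/c_k$. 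Inserting the bounds $-M_0(\omega)/k^r\le Y_1^k(\omega')/c_k\le M_0(\omega)/k^r$ and $1-M_1(\omega)/k^r\le Y_0^k(\omega'')/c_k\le 1+M_1(\omega)/k^r$ from Proposition~\ref{BC bounds} and using $e^{-\beta_k\tau_i^1}>0$ produces exactly the asserted inequalities.

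The step I expect to be the main obstacle is the first one: carefully justifying the reassociation of the infinite alternating composition together with the interchange of the limit and the outer map, and in particular checking that after shifting the $\tau_0$-indices (resp.\ the $\tau_1$-indices) by one the regrouped product really is the pullback sequence of a bona fide sub-environment. This is the pathwise analogue of the mere relabeling used in the distributional proof of Proposition~\ref{invariance}, so one must be meticulous about which factor carries which switching time; the measure-preservation argument and the coordinatewise computation are then routine, and the procedure can be iterated to obtain the finer bounds mentioned in the remark.
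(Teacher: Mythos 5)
Your proposal is correct and follows essentially the same route as the paper: your shifted environment $\omega'$ is exactly the paper's map $\sigma(\omega)=\left((\tau^{2}_{0},\tau^{1}_{1}),(\tau^{3}_{0},\tau^{2}_{1}),\dots\right)$, and the argument—pathwise identity $Y_{0}(\omega)=\Phi^{0}_{\tau_{0}^{1}}(Y_{1}(\sigma(\omega)))$ (and its counterpart for $Y_{1}$), then Proposition~\ref{BC bounds} applied at the shifted environment, then the coordinatewise affine action on $\<b_{k},\cdot\>$—is the paper's proof. Your extra attention to justifying the reassociation of the pullback limit and to the measure-preservation of the shift (so the almost-sure bounds of Proposition~\ref{BC bounds} transfer) only makes explicit what the paper leaves implicit.
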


\begin{proof}
Let $\omega$ be given. Define $\sigma:\omega\to\omega$ by
\begin{align*}
\sigma(\omega) = \left((\tau^{2}_{0},\tau^{1}_{1}),(\tau^{3}_{0},\tau^{2}_{1}),(\tau^{4}_{0},\tau^{3}_{1}),\dots\right).
\end{align*}
Then by the definition of $Y_{0}^{k}$ and $Y_{1}^{k}$, we have that
\begin{align*}
\frac{Y_{0}^{k}(\omega)}{c_{k}} = 1 + e^{-\beta_{k}\tau_{0}^{1}}\Big(\frac{Y_{1}^{k}(\sigma(\omega))}{c_{k}}-1\Big)
\end{align*}
By Proposition~\ref{BC bounds}, there exists an $M(\sigma(\omega))$ so that
\begin{align*}
-\frac{M(\sigma(\omega))}{k^{r}}
\le  \frac{Y^{k}_{1}(\sigma(\omega))}{c_{k}}\le
\frac{M(\sigma(\omega))}{k^{r}}.
\end{align*}
Thus,
\begin{align*}
1 - e^{-\beta_{k}\tau_{0}^{1}}\Big(\frac{M(\sigma(\omega))}{k^{r}}+1\Big)
\le \frac{Y_{0}^{k}(\omega)}{c_{k}} \le
1 + e^{-\beta_{k}\tau_{0}^{1}}\Big(\frac{M(\sigma(\omega))}{k^{r}}-1\Big).
\end{align*}
The bounds on $Y_{1}^{k}$ are proved in a similar way.
\qquad\end{proof}

%%%%%%%%%%%%%%%%%%%%%%%%%%%%%%%%%%%%%%%%%%%%%%%%%%%%%%%%%%%%%%%

\subsection{Application to insect physiology}\label{section:insect}

Essentially all insects breathe via a network of tubes that allows
oxygen and carbon dioxide to diffuse to and from their cells
\cite{wigglesworth_respiration_1931}. Air enters and exits this
network through valve-like holes (called spiracles) in the
exoskeleton. These spiracles regulate air flow by opening and
closing. Surprisingly, spiracles have three distinct phases of
activity, each typically lasting for hours. There is a completely
closed phase, a completely open phase, and a flutter phase in which
the spiracles rapidly open and close
\cite{lighton_discontinuous_1996}.

Insect physiologists have proposed at least five major hypotheses to
explain the purpose of this behavior
\cite{chown_discontinuous_2006}. In order to address these competing
hypotheses, physiologists would like to understand how much cellular
oxygen uptake decreases as a result of the spiracles' closing.

To answer this question, we consider the following model problem. We
represent a tube by the interval $[0,L]$ and model the oxygen
concentration at a point $x\in[0,L]$ at time $t$ by the function
$u(x,t)$. As diffusion is the primary mechanism for oxygen movement in
the tubes (see \cite{loudon_tracheal_1989}), the function $u$
satisfies the heat equation with some diffusion coefficient $D$. We
impose an absorbing boundary condition at the left endpoint of the
interval to represent cellular oxygen absorption where the tube meets
the insect tissue. The right endpoint represents the spiracle, and
since the spiracle opens and closes, the boundary condition here
switches between a no flux boundary condition, $u_{x}(L,t)=0$
(spiracle closed) and a Dirichlet boundary condition, $u(L,t)=b>0$
(spiracle open). We suppose that the spiracle switches from open to
closed and from closed to open with exponential rates $r_{0}$ and
$r_{1}$ respectively.

Then, the oxygen concentration $u(x,t)$ is the same process described above in Secion~\ref{dir neu}. Using the results from that section, if we let $\rho=r_{0}/r_{1}$ and $\gamma=L\sqrt{(r_{0}+r_{1})/D}$, then it follows from Proposition~\ref{prop:slope} that the oxygen flux to the cells at large time is given by
\begin{align*}
\left(1+\frac\rho\gamma\tanh(\gamma)\right)^{-1}\frac{bD}{L}.
\end{align*}
This formula is noteworthy because it shows that the cellular oxygen uptake not only depends on the average proportion of time the spiracle is open, but it also depends on the overall rate of opening and closing. In particular, note that if we keep the ratio $\rho$ fixed, but let $\gamma$ become large, then the oxygen uptake approaches $\frac{bD}{L}$. The biological meaning is that the insect can have its spiracles open an arbitrarily small proportion of time, and yet receive essentially just as much oxygen as if its spiracles were always open if it opens and closes with a sufficiently high frequency. This is important biologically, because it is almost certainly the correct explanation for fluttering.

\medskip

{\bf Acknowledgements.}  JCM would like to thank Yuri Bakhtin for
stimulating discussions.

\bibliography{paper2bib}

\begin{thebibliography}{10}

\bibitem{bakhtin_burgers_2007}
{\sc Y.~Bakhtin}, {\em Burgers equation with random boundary conditions}, Proc.
  Amer. Math. Soc., 135 (2007), p.~2257–2262.

\bibitem{bakhtin12}
{\sc Y.~Bakhtin and T.~Hurth}, {\em Invariant densities for dynamical systems
  with random switching}, Nonlinearity, 25 (2012).

\bibitem{balde_note_2009}
{\sc M.~Balde, U.~Boscain, and P.~Mason}, {\em A note on stability conditions
  for planar switched systems}, Internat. J. Control, 82 (2009),
  pp.~1882--1888.

\bibitem{belykh13}
{\sc I.~Belykh, V.~Belykh, R.~Jeter, and M.~Hasler}, {\em Multistable randomly
  switching oscillators: the odds of meeting a ghost}, Eur. Phys. J. Spec.
  Top.,  (2013).

\bibitem{benaim12qual}
{\sc M.~Benaim, S.~Leborgne, F.~Malrieu, and P.A. Zitt}, {\em Qualitative
  properties of certain piecewise deterministic markov processes}, preprint,
  (2012).

\bibitem{benaim12quant}
{\sc M.~Benaim, S.~Leborgne, F.~Malrieu, and P.~A. Zitt}, {\em Quantitative
  ergodicity for some switched dynamical systems}, Electron. Commun. Probab.,
  17 (2012), pp.~1--14.

\bibitem{billingsley_convergence_1999}
{\sc P.~Billingsley}, {\em Convergence of Probability Measures}, Wiley,
  Hoboken, 2nd~ed., 1999.

\bibitem{boxma_/off_2005}
{\sc O.~Boxma, H.~Kaspi, O.~Kella, and D.~Perry}, {\em On/off storage systems
  with state-dependent input, output, and switching rates}, Probab. Engrg.
  Inform. Sci, 19 (2005).

\bibitem{bressloff_metastability_2013}
{\sc P.~Bressloff and J.~Newby}, {\em Metastability in a stochastic neural
  network modeled as a velocity jump markov process}, SIAM J. Appl. Dyn. Syst.,
  12 (2013), pp.~1394--1435.

\bibitem{buckwar_exact_2011}
{\sc E.~Buckwar and M.~Riedler}, {\em An exact stochastic hybrid model of
  excitable membranes including spatio-temporal evolution}, J. Math. Biol., 63
  (2011), pp.~1051--1093.

\bibitem{chown_discontinuous_2006}
{\sc S.~Chown, A.~Gibbs, S.~Hetz, C.~Klok, John~R. Lighton, and E.~Marais},
  {\em Discontinuous gas exchange in insects: A clarification of hypotheses and
  approaches}, Physiol. Biochem. Zool., 79 (2006), pp.~333--343.

\bibitem{hairer13}
{\sc B.~Cloez and M.~Hairer}, {\em Exponential ergodicity for markov processes
  with random switching}, Bernoulli,  (to appear).

\bibitem{Crauel01}
{\sc H.~Crauel}, {\em Random point attractors versus random set attractors}, J.
  London Math. Soc. (2), 63 (2001), pp.~413--427.

\bibitem{CrauelFlandoli94}
{\sc H.~Crauel and F.~Flandoli}, {\em Attractors for random dynamical systems},
  Probab. Theory Related Fields, 100 (1994), pp.~365--393.

\bibitem{da_prato_1993}
{\sc G.~Da~Prato and J.~Zabczyk}, {\em Evolution equations with white-noise
  boundary conditions}, Stoch. and Stoch. Rep., 42 (1993), pp.~431--459.

\bibitem{diaconis_iterated_1999}
{\sc P.~Diaconis and D.~Freedman}, {\em Iterated random functions}, {SIAM}
  Rev., 41 (1999), p.~45–76.

\bibitem{doob1953}
{\sc J.~L. Doob}, {\em Stochastic processes}, Wiley Classics Library, John
  Wiley \& Sons, Inc., New York, 1990.
\newblock Reprint of the 1953 original, A Wiley-Interscience Publication.

\bibitem{duan_recent_2010}
{\sc J.~Duan, S.~Luo, and C.~Wang}, {\em Hyperbolic equations with random
  boundary conditions}, in Recent development in Stochastic Dynamics and
  Stochastic Analysis, vol.~8, World Scientific, 2010.

\bibitem{Duflo}
{\sc M.~Duflo}, {\em Random iterative models}, vol.~34 of Applications of
  Mathematics (New York), Springer-Verlag, Berlin, 1997.
\newblock Translated from the 1990 French original by Stephen S. Wilson and
  revised by the author.

\bibitem{durrett_probability_2010}
{\sc R.~Durrett}, {\em Probability : Theory and Examples}, Cambridge University
  Press, Cambridge, 4th~ed., 2010.

\bibitem{farkas_pathogen_2011}
{\sc J.~Farkas, P.~Hinow, and J.~Engelstadter}, {\em Pathogen evolution in
  switching environments: A hybrid dynamical system approach}, Math. Biosci.,
  240 (2012), pp.~70--75.

\bibitem{feldman}
{\sc R.~Feldman, K.~Meyer, and L.~Quenzer}, {\em Principles of
  Neuropharmacology}, Sinauer Assoc., Sunderland, Mass., 1997.

\bibitem{fuxe}
{\sc K.~Fuxe, A.~B. Dahlstrom, G.~Jonsson, D.~Marcellino, M.~Guescini, M.~Dam,
  P.~Manger, and L.~Agnati}, {\em The discovery of central monoamine neurons
  gave volume transmission to the wired brain}, Prog. Neurobiol., 90 (2010),
  pp.~82--100.

\bibitem{hasler13finite}
{\sc M.~Hasler, V.~Belykh, and I.~Belykh}, {\em Dynamics of stochastically
  blinking systems. part i: Finite time properties}, SIAM J. Appl. Dyn. Syst.,
  12 (2013), pp.~1007--1030.

\bibitem{hasler13asymptotic}
\leavevmode\vrule height 2pt depth -1.6pt width 23pt, {\em Dynamics of
  stochastically blinking systems. part ii: Asymptotic properties}, SIAM J.
  Appl. Dyn. Syst., 12 (2013), pp.~1031--1084.

\bibitem{hurth_2010}
{\sc T.~Hurth}, {\em Limit theorems for a one-dimensional system with random
  switchings}, master's thesis, Georgia Institute of Technology, December 2010.

\bibitem{Kifer86}
{\sc Y.~Kifer}, {\em Ergodic theory of random transformations}, vol.~10 of
  Progress in Probability and Statistics, Birkh\"auser Boston, Inc., Boston,
  MA, 1986.

\bibitem{Kifer88}
\leavevmode\vrule height 2pt depth -1.6pt width 23pt, {\em Random perturbations
  of dynamical systems}, vol.~16 of Progress in Probability and Statistics,
  Birkh\"auser Boston, Inc., Boston, MA, 1988.

\bibitem{lawleyode}
{\sc S.~D. Lawley, J.~C. Mattingly, and M.~C. Reed}, {\em Sensitivity to
  switching rates in stochastically switched {ODE}s}, Commun. Math. Sci., 12
  (2014), pp.~1343--1352.

\bibitem{lighton_discontinuous_1996}
{\sc J.~Lighton}, {\em Discontinuous gas exchange in insects}, Ann. Rev. of
  Ent., 41 (1996), pp.~309--324.
\newblock {PMID:} 8546448.

\bibitem{lin09}
{\sc H.~Lin and P.~J. Antsaklis}, {\em Stability and stabilizability of
  switched linear systems: A survey of recent results}, {IEEE} Trans. Automat.
  Contr., 54 (2009).

\bibitem{loudon_tracheal_1989}
{\sc C.~Loudon}, {\em Tracheal hypertrophy in mealworms: Design and plasticity
  in oxygen supply systems}, J. Exp. Bio., 147 (1989), pp.~217--235.

\bibitem{Mattingly99}
{\sc J.~C. Mattingly}, {\em Ergodicity of {$2$}{D} {N}avier-{S}tokes equations
  with random forcing and large viscosity}, Comm. Math. Phys., 206 (1999),
  pp.~273--288.

\bibitem{Mattingly02}
\leavevmode\vrule height 2pt depth -1.6pt width 23pt, {\em Contractivity and
  ergodicity of the random map {$x\mapsto\vert x-\theta\vert $}}, Teor.
  Veroyatnost. i Primenen., 47 (2002), pp.~388--397.

\bibitem{mattinglysuidan05}
{\sc J.~C. Mattingly and T.~M. Suidan}, {\em The small scales of the stochastic
  {N}avier-{S}tokes equations under rough forcing}, J. Stat. Phys., 118 (2005),
  pp.~343--364.

\bibitem{mattinglysuidan08}
\leavevmode\vrule height 2pt depth -1.6pt width 23pt, {\em Transition measures
  for the stochastic {B}urgers equation}, in Integrable systems and random
  matrices, vol.~458 of Contemp. Math., Amer. Math. Soc., Providence, RI, 2008,
  pp.~409--418.

\bibitem{newby_asymptotic_2011}
{\sc J.~Newby and J.~Keener}, {\em An asymptotic analysis of the spatially
  inhomogeneous velocity-jump process}, Multiscale Model. Simul., 9 (2011),
  pp.~735--765.

\bibitem{reed}
{\sc M.~Reed, H.F. Nijhout, and J.~Best}, {\em Projecting biochemistry over
  long distances}, Math. Mod. Natur. Phenom., 9 (2014), pp.~130--138.

\bibitem{ross_stochastic_1996}
{\sc S.~M. Ross}, {\em Stochastic processes}, Wiley, New York, 2nd~ed., 1996.

\bibitem{Schmalfuss96}
{\sc B.~Schmalfu{\ss}}, {\em A random fixed point theorem based on {L}yapunov
  exponents}, Random Comput. Dynam., 4 (1996), pp.~257--268.

\bibitem{random_pde}
{\sc R.~Schnaubelt and M.~Veraar}, {\em Stochastic equations with boundary
  noise}, in Parabolic Problems, J.~Escher, P.~Guidotti, M.~Hieber, P.~Mucha,
  J.~Pruss, Y.~Shibata, G.~Simonett, C.~Walker, and W.~Zajaczkowski, eds.,
  vol.~80 of Progr. Nonlinear Differential Equations Appl., Springer Basel,
  2011, pp.~609--629.

\bibitem{wang_reductions_2009}
{\sc W.~Wang and J.~Duan}, {\em Reductions and deviations for stochastic
  partial differential equations under fast dynamical boundary conditions},
  Stoch. Anal. Appl., 27 (2009), p.~431–459.

\bibitem{EKhaninMazelSinai}
{\sc E.~Weinan, K.~Khanin, A.~Mazel, and Y.~Sinai}, {\em Invariant measures for
  {B}urgers equation with stochastic forcing}, Ann. of Math. (2), 151 (2000),
  pp.~877--960.

\bibitem{wigglesworth_respiration_1931}
{\sc V.~B. Wigglesworth}, {\em The respiration of insects}, Bio. Rev., 6
  (1931), p.~181–220.

\bibitem{mathematica}
{\sc Inc. Wolfram~Research}, {\em Mathematica}, 2012.

\bibitem{Yin_2010}
{\sc G.~Yin and C.~Zhu}, {\em Hybrid {S}witching {D}iffusions}, Springer, New
  York, USA, 2010.

\bibitem{zhu_competitive_2009}
{\sc C.~Zhu and G.~Yin}, {\em On competitive {Lotka Volterra} model in random
  environments}, J. Math. Anal. Appl., 357 (2009), pp.~154--170.

\end{thebibliography}
\bibliographystyle{siam}

\end{document}